\let\mathbb\mathds
\newtheorem{theorem}{Theorem}[section]
\newtheorem{lemma}[theorem]{Lemma}
\newtheorem{corollary}[theorem]{Corollary}
\newtheorem{convention}[theorem]{Convention}
\newtheorem{proposition}[theorem]{Proposition}
\newtheorem{conjecture}{Conjecture}
\newtheorem{problem}{Problem}
\theoremstyle{remark}
\newtheorem{rem}[theorem]{Remark}
\theoremstyle{definition}
\newtheorem{definition}[theorem]{Definition}
\DeclareMathOperator{\im}{Im}
\newcommand{\nts}[1]{\marginpar{#1}}
\renewcommand{\nts}[1]{}
\def\bfG{\mbox{{\bf G}}}       \def\bfH{\mbox{{\bf H}}} \def\bfD{\mbox{{\bf D}}}    \def\bfone{\mbox{{\bf 1}}}
\def\mF{\mathcal{F}}    \def\mP{\mathcal{P}}   \def\mS{\mathcal{S}} \def\mL{\mathcal{L}}   \def\mS{\mathcal{S}}      \def\mC{\mathcal{C}}    \def\mZ{\mathcal{Z}} 
   \def\bC{\mathbb C} \def\bR{\mathbb R} \def\bQ{\mathbb Q}   \def\bZ{\mathbb Z}   \def\bP{\mathbb P}
\def\brp{\mathbb R \mathbb P}
\def\co{\colon\thinspace}
\def\del{\partial}              % 'd rond'
\def\kt{\mathfrak{t}}
\def\Reeb{\mbox{Reeb}}
\def\vol{d\varpi}
\def\length{d\sigma}
\def\ra{\rightarrow}
\newcommand{\defin}[1]{\textit{#1}}
\begin{document}

%\title[ce qui appara√ìt en haut des pages]{le vrai titre}
\title[Existence and non uniqueness of cscS metrics]{Existence and non uniqueness of constant scalar curvature toric Sasaki metrics}
%\date{\today}
\author{Eveline Legendre}
\thanks{This paper is part of the author's Ph.D. thesis. The author would like to thank her supervisors, Vestislav Apostolov and Paul Gauduchon. She is also grateful to Charles Boyer for his comments on a previous version.}

\address{Eveline Legendre : D\'epartement de Math\'ematiques, UQAM \\
C.P. 8888, Succ. Centre-ville Montr\'eal (Qu\'ebec), H3C 3P8, Canada // Centre de Math\'ematiques Laurent Schwartz, \'Ecole Polytechnique, 91128 Palaiseau, France}
\email{eveline.legendre@cirget.ca}

\keywords{extremal K\"ahler metrics, toric $4$--orbifolds, Hamiltonian $2$--forms}
\subjclass[2000]{Primary 53C25; Secondary 58E11}

\begin{abstract} We study compatible toric Sasaki metrics with constant scalar curvature on co-oriented compact toric contact manifolds of Reeb type of dimension at least $5$. These metrics come in rays of transversal homothety due to the possible rescaling of the Reeb vector fields. We prove that there exist Reeb vector fields for which the transversal Futaki invariant (restricted to the Lie algebra of the torus) vanishes. Using existence result of~\cite{TGQ}, we show that a co-oriented compact toric contact $5$--manifold whose moment cone has $4$ facets admits a finite number of rays of transversal homothetic compatible toric Sasaki metrics with constant scalar curvature. We point out a family of well-known toric contact structures on $S^2\times S^3$ admitting two non isometric and non transversally homothetic compatible toric Sasaki metrics with constant scalar curvature. \end{abstract}

%\address{E.L.: Universit\'e de Montr\'eal\\
%CP 6128 Succ. Centre ville\\
%Montr\'eal, QC H3C 3J7\\
%Canada}
%\email{legendre@dms.umontreal.ca}
%\thanks{This paper is based on the author's M.Sc. thesis, being carried out under the supervision of Professor Iosif Polterovich at Universit\'e de Montr\'eal.}
%\subjclass[2000]{Primary 35J25; Secondary 35P15}
%% voir classification http://www.ams.org/msc/
%\keywords{Laplacian, eigenvalues, Dirichlet-Neumann mixed boundary condition, Zaremba's problem}

\maketitle
\section{Introduction}
In this paper we study the existence and uniqueness of compatible Sasaki metrics of constant scalar curvature (cscS for short) on a compact co-oriented contact manifold $(N,\bfD)$, where the uniqueness should be understood up to a contactomorphism and transversal homothety (rescaling of the Reeb vector field). Sasaki--Einstein metrics, which occur when the first Chern class $c_1(\bfD)$ of the contact distribution $\bfD$ vanishes, have been intensively studied in recent years by many authors, see~\cite{BG:book}. On the other hand, the theory of cscS metrics can be viewed as an odd dimensional analogue of the more classical subject of constant scalar curvature K\"ahler metrics, which has been actively studied since the pioneering works of Calabi~\cite{calabi}. We will focus in this paper on the special case when the contact structure is toric of Reeb type in the sense of~\cite{contactNOTE} and the compatible metric is invariant under the torus action. In this setting, the problem of existence of cscS metrics is very closely related to the theory of constant scalar curvature K\"ahler metrics on toric varieties, recently developed by Donaldson in~\cite{don:scalar}.\\

Banyaga and Molino, Boyer and Galicki, and Lerman~\cite{BanyagaMolino1,BanyagaMolino2,contactNOTE,L:contactCONVEX,L:contactToric} classified toric contact manifolds $(N^{2n+1},\bfD, \hat{T}^{n+1})$ (in what follows, we suppose $n>1$).
%compact toric contact manifolds whose dimension is at least\footnote{The case of toric contact $3$--manifolds ($T^3$, $S^1\times S^2$, $S^3$ and lens spaces) is special Their classification does not fit nicely with higher dimensional cases, see~\cite{BanyagaMolino1,contactNOTE,L:contactToric}.} $5$.
The action of $\hat{T}$ pull-backs to a Hamiltonian action on the symplectization $(M^{2n+2},\hat{\omega})$ of $(N,\bfD)$, commuting with the Liouville vector field $\tau$, see~\cite{L:contactToric}. In particular, the contact moment map $\hat{\mu}:M\ra (\bR^{n+1})^*$ refers to the unique moment map on the toric symplectic cone which is homogeneous of degree $2$ with respect to the Liouville vector field $\tau$ (i.e $\mL_{\tau}\hat{\mu} = 2\hat{\mu}$) and $\mC=\im \hat{\mu}\cup \{0\}$ is {\it the moment cone}.

 In order to study toric Sasaki metrics, it is not restrictive to consider toric contact manifolds of \textit{Reeb type}, so that there exists a vector $b\in \bR^{n+1}=\mbox{Lie }T^{n+1}$ inducing a Reeb vector field $X_b\in\Gamma(TM)$, see~\cite{BG:book}. Equivalently, $b$ lies in $\mC_+^*$, the interior of the dual cone of $\mC$ (the set of strictly positive linear maps on $\im \hat{\mu} =\mC\backslash \{0\}$). In particular, $\mC$ is a {\it strictly convex} polyhedral cone, that is, $\mC_+^*$ is not empty. From~\cite{contactNOTE,L:contactToric} we know that toric contact manifolds of Reeb type of dimension at least $5$ are in correspondence with strictly convex polyhedral cones $\mC\subset\bR^{n+1}$ which are \textit{good} with respect to a lattice $\Lambda$. This means that every set of primitive vectors normal to a face of $\mC$ can be completed to a basis of $\Lambda$.

Given a strictly convex polyhedral cone $\mC$, which is good with respect to a lattice $\Lambda$, one can associate to any
$b\in\mC_+^*$ the {\it characteristic labeled polytope}\footnote{In~\cite{reebMETRIC}, $\Delta$ is the {\it characteristic polytope} of a toric Sasaki metric with Reeb vector field $X_b$.} $(\Delta_b,u_b)$ where
$$\Delta_b=\mC\cap \left\{ y \,\left|\, \langle b,y\rangle =\frac{1}{2}\right.\right\}$$ is a compact simple polytope and $u_b=\{u_{b1},\dots, u_{bd}\}$ is the set of equivalence classes in $\bR^{n+1}/\bR b$ of the primitive vectors of $\Lambda$ which are inward normal to the facets of $\mC$. Here, $\bR^{n+1}/\bR b$ is identified with the dual vector space of the annihilator of $b$ in $(\bR^{n+1})^*$ which, in turn, is identified with the hyperplane $\{ y\,|\, \langle b,y\rangle =\frac{1}{2}\}$. %Hence, $\Delta_b$ is a compact simple polytope and $u_b$ is a set of vectors (in the dual space) which are normal to the facets.
%We define the {\it Reeb family} of a good cone $(\mC,\Lambda)$ as the set of all its characteristic labeled polytopes.

  \begin{rem} Referring to $(\Delta_b,u_b)$ as a labeled polytope is slightly abusive: When there is a lattice $\Lambda'\subset \bR^{n+1}/\bR b$ containing the normals $u_{bi}$'s, there exist uniquely determined positive integers $m_i$'s such that $\frac{1}{m_i}u_{bi}$ are primitive elements of $\Lambda'$. Then $(\Delta_b, m_1,\dots, m_d)$ is a \defin{rational labeled polytope} in the sense of Lerman--Tolman \cite{LT:orbiToric} and it describes a compact toric symplectic orbifold. This case appears when the Reeb vector field $X_b$ is quasi-regular~\cite{BG:book}.
  \end{rem}
 %by $\mC_0^*\times \mH^1(\mathring{\mC})$ where $\mC_0^*$ is the interior of the dual cone of $\mC$ and $\mH^1(\mathring{\mC})$ is
Recall that on a toric symplectic orbifold a compatible K\"ahler metric corresponds to a {\it symplectic potential}, $\phi$, that is, a strictly convex smooth function defined on the interior of the moment polytope $\Delta$, which satisfies certain boundary conditions depending on the labeling $u$~\cite{abreuOrbifold,H2FII,don:estimate,guillMET}. We denote the set of these symplectic potentials by $\mS(\Delta,u)$. Similarly, Martelli, Sparks and Yau~\cite{reebMETRIC} parameterized the set of compatible toric Sasaki metrics in terms of homogeneous smooth functions of degree $1$ on $\mathring{\mC}$, the interior of $\mC$, and subject to boundary and convexity conditions. In particular, a K\"ahler cone metric $\hat{g}$ on $(M,\hat{\omega})$ corresponds to a potential $\hat{\phi}$ on $\mathring{\mC}$. According to the Abreu formula~\cite{abreu,abreuSasakAAcoord} the scalar curvature $s_{\hat{g}}$ is then the pull-back by $\hat{\mu}$ of \begin{equation}\label{abreuForm}
  S(\hat{\phi}) =  -\sum_{i,j=0}^n \frac{\del^2 \hat{H}_{ij}}{\del y_i\del y_j}
\end{equation} where $\hat{H}_{ij}$ is the inverse Hessian of $\hat{\phi}$. Hence, a cscS metric corresponds to a potential $\hat{\phi}$ with $b\in\mC_+^*$ such that $S(\hat{\phi})_{|_{\Delta_b}}$ is constant. This correspondence can be equivalently expressed in terms of symplectic potentials on characteristic labeled polytopes. Indeed, any potential $\phi\in \mS(\Delta_b,u_b)$ on a characteristic polytope $(\Delta_b,u_b)$ canonically determines (and is determined by) a {\it Boothy--Wang potential} $\hat{\phi}$ corresponding to a K\"ahler cone metric $\hat{g}$ on $M$, see~\cite{abreuSasakAAcoord} and \S\ref{sectSURVEYmetric} below. The scalar curvature of $\hat{g}$ is the restriction to $N\subset M$ of the pull-back of
$$S(\hat{\phi})= 4S(\phi) - 4n(n+1),$$
where $S(\phi) = -\sum_{i,j=1}^n \frac{\del^2 H_{ij}}{\del y_i\del y_j}$ with $H_{ij}$ the inverse Hessian of $\phi$, see~\cite{abreuSasakAAcoord}. Furthermore, the scalar curvature of the Sasaki metric is $s_g =4S(\phi) -2n$, see~\cite{BG:book} and \S~\ref{cscSuniq}.\\

%what we will call the {\it Abreu equation}:
%\begin{equation}\label{abreuEquat}
%  S(f) =c
%\end{equation} where $c$ is a constant and we consider $S$ as an operator on strictly convex functions defined on some open set of a vector space.

A primary obstruction to the existence of cscS metrics is given by the {\it Futaki--Sasaki} or {\it transversal Futaki invariant} of the Reeb vector field introduced by Boyer, Galicki and Simanca in~\cite{BGS}. In the toric case, for any Reeb vector field $X_b$, one can restrict this invariant to the Lie algebra of the torus and obtain a vector $\mF_b \in (\bR^{n+1}/\bR b)^*$ such that $\mF_b= 0$ should a compatible cscS toric metric exist. Thus, we can recast the problem of existence and uniqueness of cscS toric metrics:
 \begin{problem}\label{problemSasak} Given a strictly convex good cone $\mC$, does there exist $b\in\mC_0^*$ such that \begin{align}
  &\mF_b=0\\
  & \exists \phi\in \mS(\Delta_b,u_b), \mbox{ such that } S(\phi) \mbox{ is constant? }
  \end{align} If it exists, is such a $b$ unique up to rescaling? \end{problem}

 Rescaling $b\mapsto \lambda^{-1} b$, equivalently the contact form $\eta_b \mapsto \lambda\eta_b$, leads to a ray of toric Sasaki structures $$g' = \lambda g+(\lambda^2 -\lambda)\eta_b\otimes\eta_b.$$ Such a deformation, called {\it transversal homothety}, changes the scalar curvature as $$s_{g'} = \lambda^{-1}(s_g+2n) -2n,$$ see~\cite{BG:book}. In particular, cscS metrics occur in rays. However, once the Reeb vector $b\in\mC_+^*$ is fixed, the uniqueness of cscS metrics follows from uniqueness of solutions of the extremal K\"ahler equation in $\mS(\Delta_b,u_b)$, see~\cite{guan}, and Lemma~\ref{lemUNIQtransv} below.

% \begin{rem} In~\cite{boyerS3bundleS2}, Boyer considers simultaneously different conjugacy classes of maximal tori in the contactomorphism group, while here we do fix one. A transversal homothetic ray as above occurs as a ray in a {\it $1$--bouquet} in Boyer's framework.\end{rem}

In view of Problem~\ref{problemSasak}, the Donaldson--Tian--Yau conjecture~\cite{tian:conj,yau,don:scalar} has a straightforward interpretation in the toric Sasaki case using the notion of polystability of labeled polytopes given by Donaldson in~\cite{don:scalar}\footnote{Donaldson uses a measure on the boundary instead of labels; the two notions are equivalent.}.

\begin{conjecture}\label{conjectureSASAK}
  A compact co-oriented toric contact manifold of Reeb type admits a compatible toric cscS metric if and only if there exists $b\in\mC_+^*$ such that $\mF_b=0$ and $(\Delta_b,u_b)$ is polystable.
\end{conjecture}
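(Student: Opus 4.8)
The plan is to reduce Conjecture~\ref{conjectureSASAK} to Donaldson's toric version~\cite{don:scalar} of the Donaldson--Tian--Yau correspondence, carried out on the transversal K\"ahler quotient. By the correspondence recalled above, a compatible toric Sasaki metric with Reeb vector field $X_b$ is encoded by a symplectic potential $\phi\in\mS(\Delta_b,u_b)$, and its scalar curvature is $s_g=4S(\phi)-2n$; hence such a metric is cscS exactly when $S(\phi)$ is constant on $\Delta_b$, i.e. when $\phi$ solves the Abreu equation on the characteristic labeled polytope. Thus, for a fixed $b\in\mC_+^*$, the existence of a compatible cscS metric with Reeb field $X_b$ is equivalent to the existence of a constant scalar curvature K\"ahler metric on the toric orbifold modeled on $(\Delta_b,u_b)$, and the conjecture becomes the assertion that this K\"ahler problem is solvable for some $b$ precisely when $\mF_b=0$ and $(\Delta_b,u_b)$ is polystable.

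The necessity direction ($\Rightarrow$) I would treat as the routine half. If a compatible cscS metric exists, its Reeb field $X_b$ satisfies $\mF_b=0$ by the Boyer--Galicki--Simanca obstruction recalled above. To obtain polystability, I would integrate the Abreu formula $S(\phi)=-\sum_{i,j}\partial^2 H_{ij}/\partial y_i\partial y_j$ against an arbitrary convex function $f$ on $\Delta_b$: two integrations by parts turn $\int_{\Delta_b} S(\phi)\,f\,\ud y$ into a boundary term plus a nonnegative quadratic term in the Hessian, so that when $S(\phi)\equiv A$ is constant one recovers Donaldson's inequality $\mL_A(f):=\int_{\partial\Delta_b} f\,\ud\sigma-A\int_{\Delta_b} f\,\ud y\ge 0$, with equality forcing $f$ affine. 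The normalizing constant $A$ is pinned down by testing against constants, and the vanishing of $\mL_A$ on linear functions is exactly the condition $\mF_b=0$; together these give polystability of $(\Delta_b,u_b)$.

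The sufficiency direction ($\Leftarrow$) is the genuine obstacle, and the reason this remains a conjecture. Given $b$ with $\mF_b=0$ and $(\Delta_b,u_b)$ polystable, one must actually produce a solution $\phi\in\mS(\Delta_b,u_b)$ of the fourth-order fully nonlinear equation $S(\phi)=A$ with the Guillemin-type boundary behaviour dictated by the labels $u_b$. The hard part is the a priori control of $\phi$ and of its Hessian up to the boundary along a continuity method, so as to prevent the convex potential from degenerating and to promote polystability into a uniform estimate; this is the toric incarnation of the cscK existence problem and is open in this generality, just as the Donaldson--Tian--Yau conjecture is open for higher-dimensional toric varieties.

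In the situation relevant to this paper, however --- $N$ of dimension $5$, so that $\Delta_b$ is a polygon and the transversal geometry is a toric surface --- this existence statement is precisely Donaldson's solved two-dimensional case (with the remaining estimates subsequently completed by Chen--Li--Sheng). I would therefore prove the conjecture in that range by combining the surface existence theorem with a finite-dimensional search, over the open cone $\mC_+^*$, for a Reeb vector $b$ with $\mF_b=0$; the continuity/degree argument producing such a $b$, together with the transversal-homothety rescaling that organizes cscS metrics into rays, is exactly what underlies the partial results announced in the abstract.
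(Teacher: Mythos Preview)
Your analysis is correct and matches the paper's treatment: this is stated as a \emph{conjecture}, not a theorem, and the paper offers no proof in general. The paper simply observes, in the sentence immediately following the statement, that Donaldson's resolution of his conjecture for labeled polygons in $\bR^2$ settles the $5$--dimensional case; your reduction via the Boothby--Wang correspondence $\phi\leftrightarrow\hat\phi$ and the relation $s_g=4S(\phi)-2n$ is precisely the mechanism behind that one-line remark, and your sketch of the necessity direction is the standard Donaldson integration-by-parts argument that the paper takes for granted.

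One conflation in your final paragraph is worth flagging. Proving Conjecture~\ref{conjectureSASAK} does \emph{not} involve any ``finite-dimensional search over $\mC_+^*$ for a Reeb vector $b$ with $\mF_b=0$''. The conjecture is a conditional biconditional: it asserts that a cscS metric exists if and only if \emph{some} $b$ with $\mF_b=0$ and $(\Delta_b,u_b)$ polystable exists, so the existence of such a $b$ is a hypothesis on the sufficiency side, not something to be produced. The variational search for $b$ over $\mC_+^*$ that you describe is the content of Theorem~\ref{theoExist}, a logically independent result; in dimension $5$ the conjecture follows from Donaldson's theorem applied to the \emph{given} labeled polygon $(\Delta_b,u_b)$, with no variation over the Reeb cone required.
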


Donaldson proved his conjecture~\cite{don:scalar,don:estimate,don:extMcond,don:csc} for compact convex labeled polytopes in $\bR^2$. This immediately implies that Conjecture~\ref{conjectureSASAK} holds true for compact $5$--dimensional toric contact manifolds of Reeb type.\\

The question of existence of toric Sasaki--Einstein metrics, which makes sense on co-oriented compact toric contact manifolds with Calabi--Yau cone (that is, $c_1(\bfD)=0$) is now solved. First, Martelli, Sparks and Yau~\cite{reebMETRIC} proved that the volume functional, defined on the space of compatible Sasaki metrics, only depends on the Reeb vector field and, up to a multiplicative constant, is
$$W(b)=\int_{\Delta_b}\vol.$$ Furthermore, they showed that the Hilbert functional is a linear combination of $W$ and $Z$, where $Z$ is defined for any $\phi\in\mS(\Delta_b,u_b)$ as
$$Z(b)=\int_{\Delta_b}S(\phi)\vol$$ and only depends on the Reeb vector field. They also proved that $Z(b)$ coincides with $W(b)$ up to a multiplicative constant, when restricted to a suitable space of normalized Reeb vector fields, see Remark~\ref{remNormSAKAKIEINSTEIN}. The unique critical point of (the restriction of) $W$ is then the only normalized Reeb vector field with vanishing transversal Futaki invariant.\footnote{Martelli, Sparks and Yau extended their results to the non-toric case in~\cite{MSYvolume}.}

Futaki, Ono and Wang~\cite{FutakiOnoWang} showed, on the other hand, that for such a Reeb vector field Problem~\ref{problemSasak} has always a solution (corresponding to a Sasaki--Einstein metric).

\begin{rem}\label{remNormSAKAKIEINSTEIN} Unlike cscS metrics, there are no rays of transversal homothetic Sasaki--Einstein metrics. Indeed, since the scalar curvature $s_g$ of a Sasaki-Einstein metric satisfies $s_g=2n(2n+1)$, see~\cite{BG:book}, being Sasaki--Einstein prevents the rescaling of the Reeb vector field. In particular, there is an obvious normalization of Reeb vector fields in the search of Sasaki--Einstein metrics. However, Sasaki--Einstein metrics are cscS metrics and thus come in rays of such.\end{rem}

In this paper, we extend the Martelli--Sparks--Yau arguments to toric contact manifolds of Reeb type by showing that, after a suitable normalization of the Reeb vector fields, the critical points of the functional
$$F(b) =\frac{Z(b)^{n+1}}{W(b)^n}$$
coincide with normalized Reeb vectors with vanishing transversal Futaki invariant.

%First, Martelli, Sparks and Yau~\cite{reebMETRIC,MSYvolume} showed that on such a contact manifold the volume functional only depends on the Reeb vector field and corresponds to the Hilbert functional. Moreover, it admits a unique critical point which corresponds to the only Reeb vector field such that the transversal Futaki invariant identically vanishes. Futaki, Ono and Wang~\cite{FutakiOnoWang} then showed that in the toric case for such a Reeb vector one can deform a Sasaki structure to obtain a Sasaki--Einstein metric. \\

%
%We respectively denote the Hilbert and the volume functionals by $Z$ and $W$, namely ($\vol$ denotes the Lebesgue measure on $\Delta_b$):
%$$Z(b)=\int_{\Delta_b}S(\phi)\vol \qquad\mbox{and}\qquad W(b)=\int_{\Delta_b}\vol,$$
%for any $\phi\in\mS(\Delta_b,u_b)$.
%
%We will prove that, when restricted on some suitable hyperplane, the critical points of the functional
%$$F(b) =\frac{Z(b)^{n+1}}{W(b)^n}$$
%coincide with Reeb vectors with vanishing transversal Futaki invariant.

\begin{theorem} \label{theoExist} A co-oriented toric contact manifold of Reeb type admits at least one ray of Reeb vector fields with vanishing (restricted) transversal Futaki invariant.
\end{theorem}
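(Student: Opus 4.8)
The plan is to produce the desired Reeb vectors as critical points of the scale-invariant functional $F(b)=Z(b)^{n+1}/W(b)^n$ and then to obtain a critical point by a compactness argument, thereby extending the Martelli--Sparks--Yau volume-minimization scheme from the Sasaki--Einstein case (where $W$ alone suffices) to the general cscS setting. First I would record the homogeneity of the two functionals. Slicing $\mC$ by the level sets of $\langle b,\cdot\rangle$ and using that $\mC\cap\{\langle b,\cdot\rangle=s\}=2s\,\Delta_b$, one gets $\int_{\mC}e^{-\langle b,y\rangle}\,dy=2^n n!\,W(b)$, which exhibits $W$ as homogeneous of degree $-(n+1)$. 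Since the transversal homothety law $s_{g'}=\lambda^{-1}(s_g+2n)-2n$ together with $s_g=4S(\phi)-2n$ forces $S(\phi)$ to scale linearly under $b\mapsto\mu b$, while $\vol$ scales with degree $-(n+1)$, the functional $Z$ is homogeneous of degree $-n$. Hence $Z^{n+1}$ and $W^n$ both have degree $-n(n+1)$, so $F$ is homogeneous of degree $0$ and descends to the space of rays in $\mC_+^*$; this is exactly what the choice of exponents $n+1$ and $n$ is designed to achieve.

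Next I would identify the critical points of $F$ with the vanishing of $\mF_b$. Differentiating the slicing formula along $a\in\bR^{n+1}$ gives $dW_b(a)=-2(n+1)\int_{\Delta_b}\langle a,\cdot\rangle\,\vol$. For $Z$, the integration by parts underlying the Abreu formula yields, for every affine $f$, the identity $\int_{\Delta_b}S(\phi)f\,\vol=2\int_{\partial\Delta_b}f\,\length$, which depends on $b$ alone; differentiating this boundary expression and fixing the overall constant by the Euler relation $dZ_b(b)=-n\,Z(b)$ gives $dZ_b(a)=-2n\int_{\Delta_b}S(\phi)\langle a,\cdot\rangle\,\vol$. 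Writing $\bar S=Z(b)/W(b)$ and combining the two, I expect
\[
 dF_b(a)=-\frac{2n(n+1)}{Z(b)}\,F(b)\int_{\Delta_b}\bigl(S(\phi)-\bar S\bigr)\langle a,\cdot\rangle\,\vol=-\frac{2n(n+1)}{Z(b)}\,F(b)\,\mF_b(a).
\]
Since $F,Z>0$, the vanishing of $dF_b$ on $\bR^{n+1}$ is equivalent to $\mF_b=0$; note that $\langle b,\cdot\rangle=\tfrac12$ on $\Delta_b$ forces $\mF_b(b)=0$ automatically, consistently with $F$ having degree $0$, so this is the same as $\mF_b=0$ in $(\bR^{n+1}/\bR b)^*$.

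Finally I would fix $p_0\in\mathring{\mC}$ and work on the compact transversal $\Sigma=\{b\in\overline{\mC_+^*}:\langle p_0,b\rangle=1\}$, whose interior $\Sigma^\circ$ parametrizes the Reeb rays and on which $F$ is continuous and strictly positive. A minimum of $F$ on $\Sigma^\circ$ attained at an interior point $b_\ast$ is critical for $F$ on $\Sigma$, and since $dF_{b_\ast}$ also kills the radial direction $b_\ast$ by homogeneity, it vanishes on all of $\bR^{n+1}$, giving $\mF_{b_\ast}=0$ and hence the whole ray $\bR_{>0}\,b_\ast$. The hard part will be the properness (coercivity) estimate ensuring the minimum is interior: as $b\to\partial\mC_+^*$ there is $0\neq y_0\in\mC$ with $\langle b,y_0\rangle=0$, so $\Delta_b$ degenerates to an unbounded polyhedron elongating in some $k\ge 1$ directions with scale $L\to\infty$; heuristically $W(b)=\mathrm{Vol}(\Delta_b)$ and $Z(b)=2\int_{\partial\Delta_b}\length$ both grow like $L^k$, so $F\sim L^{(n+1)k}/L^{nk}=L^k\to+\infty$. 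Making this rigorous — controlling the exact blow-up rates of $W$ and $Z$ over all the ways $\Delta_b$ can become unbounded, where the combinatorics of the good cone $\mC$ enters — is the main obstacle, and is where I expect the bulk of the work to lie.
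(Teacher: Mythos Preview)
Your overall strategy coincides with the paper's: define $F=Z^{n+1}/W^n$, check it is scale-invariant, show its critical points on a transversal slice are exactly the Reeb vectors with $\mF_b=0$, and then prove $F\to+\infty$ at the boundary of the slice so that a minimum exists in the interior. You have correctly flagged properness as the remaining obstacle; the paper, however, handles it by a short divergence identity that avoids entirely the degeneration analysis of $\Delta_b$ you anticipate.

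The paper works in the affine picture: view $b$ as an element of $\mathrm{Aff}(\kt^*,\bR)$, translate so that $0\in\mathring\Delta$, and normalize by $b(0)=1$; then all the constants $\lambda_l:=L_l(0)$ are strictly positive. Writing $x_l(b)=\int_{F_l}b^{-n}\,\length$, one has $Z(b)=2\sum_l x_l(b)$ directly, and the key step (Lemma~\ref{divW}) applies the divergence theorem to the vector field $X=\sum_i\mu_i\,b^{-n}\,\partial_{\mu_i}$, with $\mathrm{div}\,X=n\,b^{-(n+1)}$, to obtain
\[
W(b)=\int_\Delta b^{-(n+1)}\vol=n\sum_l \lambda_l\,x_l(b).
\]
With $\lambda=\max_l\lambda_l$ this immediately yields the pointwise lower bound
\[
F(b)=\frac{\bigl(2\sum_l x_l\bigr)^{n+1}}{\bigl(n\sum_l\lambda_l x_l\bigr)^{n}}\ \ge\ \frac{2^{n+1}}{(n\lambda)^n}\sum_l x_l(b)\ =\ \frac{2^{n}}{(n\lambda)^n}\,Z(b).
\]
Iterating the same trick down the faces of $\Delta$ expresses $Z(b)$ as a sum of terms $\beta(E)/\bigl(b(p_E)b(q_E)\bigr)$ over the edges $E$; since the boundary of the slice $\Omega=\{b(0)=1\}\cap\mC^*_+(\Delta)$ consists precisely of affine functions vanishing at some vertex, $Z(b)\to+\infty$ there and hence so does $F$. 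No direct control of how $\Delta_b$ becomes unbounded, and no combinatorics of the good cone, is needed.
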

Unlike the Sasaki--Einstein problem, Reeb vector fields with vanishing transversal Futaki invariant do not necessarily lead to cscS metrics, see e.g.~\cite{don:scalar}. However, as we proved in~\cite{TGQ} any labeled quadrilateral $(\Delta,u)$ with vanishing Futaki invariant admits a symplectic potential $\phi\in\mS(\Delta,u)$ for which $S(\phi)$ is constant. Thus, we obtain

\begin{theorem} \label{TheoS2S3} A co-oriented toric contact $5$--dimensional manifold of Reeb type whose moment cone has $4$ facets admits at least $1$ and at most $7$ distinct rays of transversal homothetic compatible Sasaki metrics of constant scalar curvature.
Moreover, for each pair of co-prime numbers, $(p,q)$, such that $p>5q$, there exist $2$ non-isometric, non transversally homothetic Sasaki metrics of constant scalar curvature compatible with the same contact structure on the Wang--Ziller $5$--dimensional manifold $M_{p,q}^{1,1}$.
\end{theorem}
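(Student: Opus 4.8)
The plan is to establish the three assertions of the statement separately, the genuinely hard part being the upper bound of seven rays.

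First, the lower bound. Because the moment cone $\mC\subset(\bR^3)^*$ has exactly four facets, the characteristic polytope $\Delta_b$ is a simple quadrilateral for every $b\in\mC_+^*$, so that $(\Delta_b,u_b)$ is a labeled quadrilateral. Theorem~\ref{theoExist} furnishes a ray $\bR_{>0}b_0\subset\mC_+^*$ with $\mF_{b_0}=0$, and the existence result of~\cite{TGQ} then produces a symplectic potential $\phi\in\mS(\Delta_{b_0},u_{b_0})$ with $S(\phi)$ constant. Through the Boothby--Wang correspondence recalled above, this gives a compatible toric cscS metric with Reeb field $X_{b_0}$, which comes in a ray by transversal homothety; hence at least one ray exists.

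For the upper bound, since $\mF_b=0$ is necessary for the existence of a compatible cscS metric, it suffices to bound the number of rays in $\mC_+^*$ on which the restricted transversal Futaki invariant vanishes, and by the variational characterization described before Theorem~\ref{theoExist} these are exactly the critical points of $F(b)=Z(b)^3/W(b)^2$ among suitably normalized Reeb vectors. I would make $W$ and $Z$ explicit. Writing $\nu_1,\dots,\nu_4$ for the primitive inward facet normals of $\mC$ in cyclic order, each vertex $p_{ij}$ of $\Delta_b$ solves $\langle\nu_i,p_{ij}\rangle=\langle\nu_j,p_{ij}\rangle=0$ and $\langle b,p_{ij}\rangle=\frac{1}{2}$, so
\[
p_{ij}=\frac{\nu_i\times\nu_j}{2\,D_{ij}(b)},\qquad D_{ij}(b)=\det(b,\nu_i,\nu_j),
\]
a linear form in $b$. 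Triangulating the quadrilateral expresses $W(b)=\int_{\Delta_b}\vol$ as a rational function whose denominator is a product of the four edge-forms $D_{ij}$, while the identity $Z(b)=2\int_{\partial\Delta_b}d\sigma$ (integration by parts in the Abreu formula) expresses $Z$ as a sum of label-weighted edge lengths, again rational with denominators of the same type. Clearing denominators in the Lagrange equations for a critical point of $F$ turns the condition $\mF_b=0$ into a polynomial system in the two effective coordinates on the normalizing slice. The main obstacle is to show that inside the chamber $\mC_+^*$ this system has at most seven solutions up to scaling: a naive B\'ezout bound overcounts, so I expect to exploit the precise factored form of the denominators together with the positivity constraints $D_{ij}(b)>0$ in order to discard spurious and non-real roots, ideally reducing the system by a resultant to a single one-variable polynomial of degree seven. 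This would give at most seven rays.

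For the last assertion I would specialize these formulas to the good cone associated with the Wang--Ziller manifold $M_{p,q}^{1,1}$, whose four facet normals can be written explicitly in terms of $(p,q)$. The equation $\mF_b=0$ then becomes an explicit algebraic condition on the ray of $b$, and the task is to show that for coprime $(p,q)$ with $p>5q$ it admits at least two distinct solution rays $\bR_{>0}b_1$ and $\bR_{>0}b_2$; each yields a compatible cscS metric by~\cite{TGQ}. Finally I would verify that the two resulting metrics are genuinely distinct: they are non transversally homothetic because $b_1$ and $b_2$ span different rays, and non isometric because an isometry would have to be induced by the finite group of lattice automorphisms preserving $\mC$, which does not carry one Reeb ray onto the other.
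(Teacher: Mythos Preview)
Your lower-bound argument matches the paper exactly.

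For the upper bound of seven, your outline has the right ingredients but misses the key simplification the paper uses: since $\mathbb{P}\mathrm{Gl}(3,\bR)$ acts transitively on generic $4$--tuples in $\brp^2$, every strictly convex cone with four facets in $\bR^3$ is linearly equivalent to a fixed cone over the unit square $\Delta_o$. The whole computation can therefore be done once and for all on $\Delta_o$ with normals $u_l=\pm r_l^{-1}e_i$. In the slice $b=1+b_1e_1+b_2e_2$ the functionals $W_{00}$ and $Z_0$ become explicit rational functions of $a_1=b_1-b_2$, $a_2=b_1+b_2$, and the condition $d_bF=0$ reduces to two explicit cubics $P(a_1,a_2)$ and $Q(a_1,a_2)$. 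A B\'ezout count then gives nine, and an elementary look at the common zeros at infinity of their leading forms $-K a_1^2a_2$ and $-K a_1a_2^2$ removes two, yielding at most seven. Your plan to ``reduce by a resultant to a degree-seven polynomial'' is equivalent in spirit, but without normalizing to the square you will not get clean enough expressions to carry this out.

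For the Wang--Ziller part there is a genuine gap. You propose to find two distinct Reeb rays with $\mF_b=0$ and then argue non-isometry via the finite group of lattice automorphisms of $\mC$. But in the explicit computation for $r_1=r_3=p$, $r_2=r_4=q$ with $p>5q$, there are in fact \emph{three} critical points in $\Omega$: $b_o=(0,0,1)$ and $b_\pm=(0,\pm a,1)$ with $a^2=1-\tfrac{4q}{p-q}$. The lattice automorphism $A=-e_1\otimes e_1^*-e_2\otimes e_2^*+e_3\otimes e_3^*$ preserves the normals and exchanges $b_+$ and $b_-$, so by Proposition~\ref{unicity} the metrics $g_{b_+}$ and $g_{b_-}$ \emph{are} isometric. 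Your proposed criterion would fail if you happened to pick these two rays. The two genuinely distinct metrics are $g_{b_o}$ and $g_{b_\pm}$, and the paper separates them not by a symmetry argument but by observing that $\Delta_{b_o}$ is a square while $\Delta_{b_\pm}$ is a (proper) trapezoid, so no scaling can make the labeled polytopes equivalent; Proposition~\ref{unicity} then rules out any isometry up to transversal homothety.
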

\noindent More precisely, following~\cite[Corollary 1.6]{TGQ} these metrics are explicitly given in terms of two polynomials of degree at most $3$. The toric contact structure on $M_{p,q}^{1,1}$ is the one described in~\cite{BGS2}. The first part of Theorem~\ref{TheoS2S3} partially answers a question of Boyer~\cite{boyerS3bundleS2}.

The paper is organized as follows: Section~\ref{sectionMETRIC} contains basic notions of toric Sasakian geometry, emphasizing the boundary conditions required for the potential to induce a smooth K\"ahler cone metric. We also give the results we need about uniqueness of cscS metrics. In Section~\ref{sectionREEBfamilyCONE} we give a way to check whether or not a labeled polytope is characteristic of a good cone, we study then the properties of the functional $F$ and prove Theorem~\ref{theoExist}. In Section~\ref{sectEXquadril} we specialize our study to the case of cones over quadrilaterals in $\bR^3$ and prove Theorem~\ref{TheoS2S3}.

\section{Sasaki and transversal K\"ahler toric metrics: A quick review}\label{sectionMETRIC}

A {\it labeled polytope}, $(\Delta,u)$, is a simple compact polytope $\Delta$ in a $n$--dimensional vector space $\kt^*$ which has $d$ codimension $1$ faces, called \defin{facets} and denoted by $F_1,\dots, F_d$; together with a set $u=\{u_1,\dots, u_d\}$ of vectors in $\kt$ which are inward (with respect to $\Delta$) and such that $u_i$ is normal to $F_i$. %Recall that a \defin{face} of $\Delta$ is a non-empty subset $F$, for which there exists $I\subset \{1,\dots,d\}$ such that $F=F_I= \cap_{i\in I}F_i$ and we consider also
Recall that a polytope is \defin{simple} if each vertex is the intersection of $n$ distinct facets. In what follows, we consider $\Delta$ itself as a face. $(\Delta,u)$ is {\it rational} with respect to a lattice $\Lambda$ if $u\subset \Lambda$.
\begin{definition}\label{defnPolytEquiv}
Two labeled polytopes are \defin{equivalent} if one underlying polytope can be mapped on the other via an invertible affine map, $A\co \mathfrak{t}^*\ra \mathfrak{s}^*$, such that the normals correspond via the differential's adjoint $(dA)^*\co \mathfrak{s}\ra \mathfrak{t}$.
\end{definition}

\subsection{Toric contact and symplectic geometry.}\label{sectSURVEYorbifVSpolyt}\label{sectAFFPolyt}\label{assLabePolyt}
A symplectic cone is a triple $(M,\omega,\tau)$ where $(M,\omega)$ is a symplectic manifold and $\tau\in\Gamma(TM)$ is a vector field generating a proper action of $\bR_{>0}$ on $M$ such that $\mL_{\tau}\omega =2\omega$.
\begin{definition}
A \defin{toric symplectic manifold} is a symplectic manifold $(M,\omega)$ together with an effective Hamiltonian action of a torus $T$, $\rho \co T \hookrightarrow \mathrm{Symp}(M,\omega)$, such that the dimension of $T$ is half the dimension of $M$ and there is a proper $T$--equivariant smooth map $\mu \co M \ra \mathfrak{t}^*=(\mbox{Lie}\,T)^*$ satisfying $d\mu(a) = -\iota_{d\rho(a)}\omega$. The map $\mu$ is unique up to an additive constant and is called the \defin{moment map}. We recall that at $p\in M$, $d\rho(a) = X_a(p)=\frac{d}{dt}_{|_{t=0}}(\exp ta)\cdot p$.
\end{definition}
Recall that, to a co-oriented compact connected contact manifold , $(N^{2n+1}, \bfD)$, corresponds a symplectic cone over a compact manifold, $(\bfD_+^o,\hat{\omega},\tau)$. $\bfD_+^o$ is a connected component of the complement of the $0$--section in $\bfD^o$, the annihilator in $T^*N$ of the contact distribution $\bfD$. $\hat{\omega}=d\lambda$ is the restriction of the differential of the canonical Liouville form $\lambda$ of $T^*N$, and $\tau$ is the Liouville vector field $\tau_{(p,\alpha)}= \frac{d}{ds}_{|_{s=0}} e^{2s}\alpha_p$, so that $\mL_{\tau} \hat{\omega}= 2\hat{\omega}$. %there is a correspondence between co-oriented compact connected contact manifolds and symplectic cones over compact manifolds. Indeed, to such a contact manifold, $(N^{2n+1}, \bfD)$, one can naturally associate the symplectic cone
See~\cite{L:contactToric}, for a detailed description.

\begin{definition}
A (compact) \defin{toric contact} manifold $(N^{2n+1}, \bfD, \hat{T}^{n+1})$ is a co-oriented compact connected contact manifold $(N^{2n+1}, \bfD)$ endowed with an effective action of a (maximal) torus $\hat{T}\hookrightarrow \mbox{Diff}(N)$ preserving the contact distribution $\bfD$ and its co-orientation. Equivalently, the symplectic cone $(\bfD_+^o,\hat{\omega},\tau)$ is toric with respect to the action of $\hat{T}$ and the Liouville vector field $\tau$ commutes with $\hat{T}$. We denote by
$$\hat{\mu}\co \bfD_+^o \ra \hat{\mathfrak{t}}^*=(\mbox{Lie}\, \hat{T})^*$$
the \defin{contact moment map}, that is, the unique moment map of $(\bfD_+^o,\hat{\omega}, \hat{T})$ which is homogeneous of degree $2$ with respect to $\tau$, see~\cite{L:contactToric}.
\end{definition}

\begin{definition}\label{defnGOODcone}
A polyhedral cone is \defin{good} with respect to a lattice $\Lambda$, if any facet $F_i$ has a normal vector lying in $\Lambda$ and, for any face $F_I= \cap_{i\in I} F_i$, $\mbox{span}_{\bZ}\{\hat{u}_i\,|\, i\in I\}= \Lambda\cap \mbox{span}_{\bR}\{\hat{u}_i\,|\, i\in I\}$ where $\hat{u}_i$ is the vector normal to $F_i$, primitive in $\Lambda$.
\end{definition}

Lerman~\cite{L:contactCONVEX,L:contactToric} showed that the image, $\im  \hat{\mu}$, of the contact moment map of a compact toric contact manifold $(N, \bfD, \hat{T})$ does not contain $0$ and $\mC= \im \hat{\mu} \cup \{0\}$ is a convex, polyhedral cone which is good with respect to the lattice of circle subgroups, $\Lambda \subset \hat{\mathfrak{t}}$. $\mC$ is called \defin{the moment cone}.

As mentioned in the introduction, a toric contact manifold of dimension at least $5$ is of \defin{Reeb type}, if the moment cone $\mC$ is strictly convex, that is,
$$\mC^*_+ = \{b\in \hat{\mathfrak{t}}\,|\; \forall x\in \mC\backslash \{0\}, \; \langle b,x\rangle>0\}\neq \{\emptyset\}.$$
%Let $(N,\bfD,\hat{T})$ be such a manifold and whose moment cone $\mC$ is good with respect to $\Lambda$.
Indeed, see~\cite{BG:book}, for any $b\in \mC^*_+$, there is a contact form, $\eta_b$, (i.e $\ker \eta_b =\bfD$) for which $X_b$ is a Reeb vector field meaning that: $ \eta_b(X_b)\equiv 1$ and $\mL_{X_b}\eta_b \equiv0.$

Any strictly convex good polyhedral cone is the moment cone of a toric contact manifold of Reeb type, unique up to contactomorphisms, see~\cite{L:contactToric}.
 \begin{definition}\label{defnTransversal}
Let $(\mC,\Lambda)$ be a strictly convex rational polyhedral good cone with $d$ facets. Denote by $\hat{u}_1, \dots, \hat{u}_d$ the set of primitive vectors in $\Lambda$ normal to the facets of $\mC$.  For $b\in\mC^*_+$, we define the labeled polytope
\begin{equation}\label{transversPolyt}
 (\Delta_b, u_b)=( \mC\cap \kt_b^*, [\hat{u}_1]_b,\dots , [\hat{u}_d]_b)
\end{equation} where $\kt_b^*$ the hyperplane $\kt_b^* =\{ x\in \hat{\mathfrak{t}}^*\,|\, \langle b,x\rangle=\frac{1}{2}\}$. Up to translation, $\kt_b$ is the annihilator of $b$ in $\hat{\mathfrak{t}}^*$ and, thus, its dual space is identified with $\hat{\mathfrak{t}}/\bR b$. The polytope $\Delta_b$ is $n$--dimensional and simple polytope. We say that $(\Delta_b, u_b)$ is the {\it characteristic labeled polytope} of $(\mC,\Lambda)$ at $b$.
\end{definition}

As Boyer and Galicki showed in~\cite{contactNOTE}, the space of leaves $\mZ_b$ of the Reeb vector field $X_b$ is an orbifold if and only if the orbits of $X_b$ are closed, that is, if and only if $\bR b\cap \Lambda \neq\{0\}$. In that case, $X_b$ is said \defin{quasi-regular} and $b$ generates a circle subgroup of $\hat{T}$, $T_b = \bR b / (\bR b \cap \Lambda)$, which acts on the cone $(\bfD_+^o,\hat{\omega})$ via the inclusion $\iota_b \co T_b \hookrightarrow \hat{T}$, with moment map $\mu_b =\iota_b^*\circ\hat{\mu}\co \bfD_+^o \ra \bR$. The space of leaves is identified with the symplectic reduction $\hat{\mu}_b^{-1}(1/2)/T_b$. Via the Delzant--Lerman--Tolman~\cite{delzant:corres,LT:orbiToric} correspondence, $(\mZ_b,d\eta_b, T/T_b)$ is the toric symplectic orbifold associated to $(\Delta_b, u_b)$.

\begin{rem}\label{DLTcorresp}  Any compact toric symplectic orbifold admits a moment map whose image is a convex polytope $\Delta$ in the dual of the Lie algebra of $\mathfrak{t}^*$. The weights of the action determine a set of normals $u\subset \kt$ so that $(\Delta,u)$ is rational with respect to the lattice $\Lambda=\ker\exp(\kt\ra T)$. The Delzant--Lerman--Tolman correspondence states that a compact toric symplectic orbifold is determined by its associated rational labeled polytope, up to a $T$--invariant symplectomorphism of orbifolds.

Conversely, any rational labeled polytope can be obtained from a toric symplectic orbifold, via \defin{Delzant's construction}. Two Hamiltonian actions $(\mu,T,\rho)$, $(\mu',T',\rho')$ on a symplectic orbifold $(M,\omega)$ are equivalent if and only if their labeled polytopes are equivalent in the sense of Definition~\ref{defnPolytEquiv}.
\end{rem}

A \defin{toric Sasaki} manifold $(N, \bfD,g,\hat{T})$ is a $(2n+1)$--dimensional Sasaki manifold whose underlying contact structure is toric with respect to an $(n+1)$-dimensional torus $\hat{T}$ and whose metric is $\hat{T}$--invariant. Recall that it corresponds to the K\"ahler toric cone $(M,\hat{\omega},\hat{g},\tau, \hat{T},\hat{\mu})$ where $M=\bfD^o_+$ and $\hat{g}$ is the cone metric of $g$, that is, $\hat{g}$ is homogeneous of degree $2$ with respect to the Liouville vector field $\tau$ and coincides with $g$ on $N\subset M$, seen as the level set $\hat{g}(\tau,\tau)=1$. $(\hat{\omega},\hat{g},\hat{J})$ is a toric K\"ahler structure such that $\tau$ is real holomorphic. Notice that $\hat{J}\tau$ is induced by an element $b\in \mC^*_+\subset\hat{\mathfrak{t}}$, so that $X_b= \hat{J}\tau$ restricts to a Reeb vector field on $N$. The {\it transversal K\"ahler geometry} of $g$ refers to the metric $\check{g}$ induced on $\bfD$ by $g= \eta_b\otimes \eta_b +\check{g}$.

\subsection{K\"ahler metric in action-angle coordinates}\label{sectSURVEYmetric}
The material of this section is taken in~\cite{abreuOrbifold,abreuSasakAAcoord,BGL,CDG,guillMET}. Let $(M^{2n},\omega, J, g, T, \mu)$ be a K\"ahler toric orbifold, that is $g$ is a $T$--invariant K\"ahler metric and $J$ is a complex structure such that $g(J\cdot,\cdot)=\omega(\cdot,\cdot)$. We are interested in the cases where $\mP =\im \mu$ is a strictly convex cone or a polytope. We denote by $\mathring{\mP}$ the interior of $\mP$. Recall from~\cite{delzant:corres,LT:orbiToric,L:contactToric} that $\mathring{M}=\mu^{-1}(\mathring{\mP})$ is the subset of $M$ where the torus acts freely. The K\"ahler metric provides a horizontal distribution for the principal $T$--bundle $\mu \co \mathring{M} \ra \mathring{\mP}$ which is spanned by the vector fields $JX_u$, $u\in \mathfrak{t}=\mbox{Lie } T$. This gives an identification between the tangent space at any point of $\mathring{M}$ and $\mathfrak{t} \oplus \mathfrak{t}^*$. Usually, one chooses a basis $(e_1,\dots,e_n)$ of $\mathfrak{t}$ to identify $\mathring{M} \simeq \mathring{\mP}\times T$ using the flows of the induced commuting vector fields $X_{e_1}$, ... $X_{e_n}$, $JX_{e_1}$, ... $JX_{e_n}$. The action-angle coordinates on $\mathring{M}$ are local coordinates $(\mu_1,\dots,\mu_d,t_1,\dots,t_d)$ on $\mathring{M}$ such that $\mu_i = \langle \mu,e_i\rangle$ and $X_{e_i} =\frac{\del}{\del t_i}$. The differentials $dt_i$ are real-valued closed $1$--forms globally defined on $\mathring{M}$ as dual of $X_{e_i}$ (i.e $dt_i(X_{e_i}) =\delta_{ij}$ and $dt_i(JX_{e_j})=0$).

In action-angle coordinates, %$(\mu_1,\dots,\mu_d,t_1,\dots,t_d)$
the symplectic form becomes $\omega =\sum_{i=1}^n d\mu_i\wedge dt_i$. It is well-known \cite{guillMET} that any toric K\"ahler metric is written as
\begin{align}\label{ActionAnglemetric}
g= \sum_{s,r} G_{rs}d\mu_r\otimes d\mu_s + H_{rs}dt_r\otimes dt_s
\end{align}
where the matrix valued functions $(G_{rs})$ and $(H_{rs})$ are smooth on $\mathring{\mP}$, symmetric, positive definite and inverse to each other. %It may be more convenient to view these objects through the identification between tangent spaces of $\mathring{M}$ and $\mathfrak{t} \oplus \mathfrak{t}^*$ as above. Indeed,
Following~\cite{H2FII} we define the $S^2\mathfrak{t}^*$--valued function $\bfH \co \mathring{\mP} \ra \mathfrak{t}^*\otimes\mathfrak{t}^*$ by $\bfH_{\mu(p)}(u,v)= g_p(X_u,X_v)$, put $H_{rs}= \bfH(e_r,e_s)$ and $\bfG \co\mathring{\mP} \ra \mathfrak{t}\otimes\mathfrak{t}$ is defined similarly.

% integrality condition:

%\begin{rem}Given the expression~(\ref{ActionAnglemetric}), the integrability of the complex structure $J$ is equivalent to the relation
%\begin{equation}\label{integCond}
%  \frac{\del}{\del\mu_j} G_{rs} = \frac{\del}{\del\mu_r} G_{js}
%\end{equation}
%or, equivalently, to the fact that $(G_{rs})$ is the Hessian of a potential $G \in C^{\infty}(\mathring{\mP})$. In order to adapt the point of view of~\cite{H2FII} to the conical setting, it is more natural to work with {\it almost K\"ahler} metric $g$ meaning that $J$ is not necessarily integrable (equivalently (\ref{integCond}) is not satisfies or $\hat{\bfG}$ is not the Hessian of a function). \end{rem}

When $M$ is compact, necessary and sufficient conditions for a $S^2\mathfrak{t}^*$--valued function $\bfH$ to be induced by a globally defined toric K\"ahler metric on $M$ were established in \cite{abreuOrbifold,H2FII,don:estimate}. We are going to adapt the point of view of \cite{H2FII} to K\"ahler cones. Let $(\Delta,u)$ be a labeled polytope. For a non-empty face $F = F_I= \cap_{i\in I} F_i$ of $\Delta$, denote $\mathfrak{t}_F = \mbox{span}_{\bR}\{ u_i\,|\; i\in I\}$. Its annihilator in $\mathfrak{t}^*$, denoted $\mathfrak{t}_F^o$, is naturally identified with $(\mathfrak{t}/\mathfrak{t}_F)^*$. We use the identification $T^*_{\mu}\kt^* \simeq \kt$.

\begin{definition}\label{defSympPot}The set of symplectic potentials $\mS(\Delta,u)$ is the space of smooth strictly convex functions on $\mathring{\Delta}$ for which $\bfH=(\mbox{Hess }\phi)^{-1}$ is the restriction to $\mathring{\Delta}$ of a smooth $S^2\mathfrak{t}^*$--valued function on $\Delta$, still denoted by $\bfH$, which verifies the boundary condition: For every $y$ in the interior of the facet $F_i\subset \Delta$,
\begin{equation}\label{condCOMPACTIFonH}
  \bfH_y (u_i, \cdot\,) =0\;\;\;\mbox{ and }\;\;\; d\bfH_y (u_i, u_i) =2u_i
\end{equation} and the positivity conditions: The restriction of $\bfH$ to the interior of any face $F \subset \Delta$ is a positive definite $S^2(\mathfrak{t}/\mathfrak{t}_F)^*$--valued function. \end{definition}

\begin{proposition}\label{prop1H2FII}\cite[Proposition 1]{H2FII}
Let $\bfH$ be a positive definite $S^2\mathfrak{t}^*$--valued function on $\mathring{\Delta}$. $\bfH$ comes from a K\"ahler metric on $M$ if and only if there exists a symplectic potential $\phi \in\mS(\Delta,u)$ such that $\bfH=(\mbox{Hess }\phi)^{-1}$.
\end{proposition}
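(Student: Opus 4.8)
The plan is to split the equivalence into a local statement on the open dense set $\mathring{M}=\mu^{-1}(\mathring{\Delta})$, where the torus acts freely, and a boundary statement encoding smooth compactification across the preimages of the proper faces of $\Delta$. On $\mathring{M}\simeq\mathring{\Delta}\times T$ any $T$--invariant compatible metric has the form~\eqref{ActionAnglemetric} with $(G_{rs})=(H_{rs})^{-1}$, so the only datum not yet used is the integrability of the complex structure $J$ determined by $g$ and $\omega$. In action-angle coordinates the vanishing of the Nijenhuis tensor of $J$ reduces to the closedness of the $1$--forms $\sum_s G_{rs}\,d\mu_s$ on the convex set $\mathring{\Delta}$, equivalently to $\bfG=\mbox{Hess}\,\phi$ for a function $\phi$, unique up to an affine term; positive-definiteness of $\bfG$ is strict convexity of $\phi$, and $\bfH=(\mbox{Hess}\,\phi)^{-1}$. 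Thus compatible toric K\"ahler metrics on $\mathring{M}$ correspond to strictly convex $\phi$ on $\mathring{\Delta}$, and the whole content of the Proposition is to identify which $\phi$ extend to $M$, i.e.\ which lie in $\mS(\Delta,u)$.

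For necessity, assume $\bfH$ already comes from a smooth metric $g$ on $M$. Since $\bfH_{\mu(p)}(v,w)=g_p(X_v,X_w)$ and both $\mu\co M\ra\Delta$ and the fundamental fields $X_v$ are smooth on all of $M$, $\bfH$ extends smoothly to $\Delta$. Over the interior of a face $F=F_I$ the stabilizer has Lie algebra $\mathfrak{t}_F=\mbox{span}\{u_i\,|\,i\in I\}$, so $X_v(p)=0$ precisely when $v\in\mathfrak{t}_F$; this yields at once the annihilation $\bfH_y(u_i,\cdot\,)=0$ on $\mathring{F_i}$ and, because the remaining $X_v$ span the tangent space to the orbit and $g$ is positive definite, the positive-definiteness of the induced $S^2(\mathfrak{t}/\mathfrak{t}_F)^*$--valued function. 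The one genuinely nontrivial condition is $d\bfH_y(u_i,u_i)=2u_i$: transverse to the fixed locus of the circle generated by $u_i$, the function $\ell_i=\langle u_i,\mu\rangle-\lambda_i$ is the moment map of a rotation and behaves like $\tfrac12|z|^2$ in a transverse complex coordinate $z$, so smoothness of $g$ forces $\bfH(u_i,u_i)\sim 2\ell_i$ near $F_i$; differentiating and using $d\ell_i=u_i$ gives the stated first-derivative normalization. Hence $\phi\in\mS(\Delta,u)$.

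For sufficiency, given $\phi\in\mS(\Delta,u)$ I would define $g$ on $\mathring{M}$ from $\bfG=\mbox{Hess}\,\phi$ via~\eqref{ActionAnglemetric} and the corresponding $J$; K\"ahlerity on $\mathring{M}$ is automatic since $\bfG$ is a Hessian. To extend $g$ smoothly it suffices to examine a neighbourhood of each vertex $F_I$ ($|I|=n$), where the good/Delzant condition makes the normals $\{u_i\}_{i\in I}$ a basis of $\Lambda$ up to a finite quotient, giving an orbifold chart modeled on $\bC^n/\Gamma$. The device is Guillemin's canonical potential $\phi_{\mathrm{can}}=\tfrac12\sum_i\ell_i\log\ell_i$~\cite{guillMET}, whose metric is the standard smooth (orbifold) toric K\"ahler metric near the vertex; the boundary conditions~\eqref{condCOMPACTIFonH} say exactly that $\bfH$ and $(\mbox{Hess}\,\phi_{\mathrm{can}})^{-1}$ have the same boundary $1$--jet along each facet, which is equivalent to $\phi-\phi_{\mathrm{can}}$ extending to a smooth function on $\Delta$~\cite{abreuOrbifold}. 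Then $g$ differs from the smooth model metric by the contribution of a function smooth up to the boundary, while the positivity conditions on the faces keep $g$ a genuine metric on every stratum; this produces the required compatible K\"ahler metric on $M$ inducing $\bfH$.

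The step I expect to be the main obstacle is precisely this boundary analysis: matching the intrinsic conditions of Definition~\ref{defSympPot}---the vanishing $\bfH_y(u_i,\cdot\,)=0$, the normalization $d\bfH_y(u_i,u_i)=2u_i$, and positivity on the lower faces---with honest smoothness of $g$ across the locus where the action-angle coordinates degenerate. The cleanest way to control it is to reduce, via the good-cone condition, to the explicit $\bC^n/\Gamma$ model at each vertex and to compare with the canonical metric, so that only the smoothness of the difference $\phi-\phi_{\mathrm{can}}$, and not of $\phi$ itself, has to be checked near the boundary.
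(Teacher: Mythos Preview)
The paper does not give its own proof of this proposition: it is quoted from \cite{H2FII}, and the surrounding text only sketches the strategy of \cite{H2FII} (the two lemmas reproduced as Lemma~\ref{lemma2H2FII} and the analogue of Lemma~\ref{lemAAcoordNOTIMPORTANT}) before adapting it to cones in Proposition~\ref{propHbSasakREVERSE}. So there is no detailed argument to compare against; what can be compared is strategies.

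Your route is the Abreu route: produce the Guillemin potential $\phi_{\mathrm{can}}$, show that the conditions of Definition~\ref{defSympPot} force $\phi-\phi_{\mathrm{can}}$ to be smooth on $\Delta$, and then transport smoothness from the model metric. The route the paper points to is different: \cite{H2FII} works directly with $\bfH$, Taylor--expands its entries near a face in a basis adapted to that face, and uses the comparison Lemma~\ref{lemma2H2FII} (smoothness of $\bfG\bfG_0$ and $\bfG_0\bfH\bfG_0-\bfG_0$ for \emph{some} reference $\bfG_0$) to conclude that the metric extends. This avoids ever passing through the potential $\phi$ on the boundary and is what makes the argument portable to the cone and almost-K\"ahler settings used later in the paper.

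Your outline is correct in spirit, but the sentence ``the boundary conditions~\eqref{condCOMPACTIFonH} say exactly that $\bfH$ and $(\mbox{Hess}\,\phi_{\mathrm{can}})^{-1}$ have the same boundary $1$--jet along each facet, which is equivalent to $\phi-\phi_{\mathrm{can}}$ extending to a smooth function on $\Delta$'' hides the real work. Matching the $1$--jet of $\bfH$ along a facet does not by itself give smoothness of $\phi-\phi_{\mathrm{can}}$ up to the boundary: $\bfG=\bfH^{-1}$ blows up there, and one must control how the singular parts of $\bfG$ and $\bfG_{\mathrm{can}}$ cancel, including at the lower-dimensional faces where several facets meet. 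Establishing that equivalence is precisely one of the points of \cite{H2FII} (and, in a different formulation, of \cite{don:estimate}); \cite{abreuOrbifold} only provides the easier direction. You correctly flag this as the main obstacle, but your sketch of how to resolve it is closer to a restatement than a solution. The \cite{H2FII} comparison lemma is designed exactly to bypass this step.
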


%As in~\cite{abreu}, choosing a basis of $\mathfrak{t}=\mbox{ Lie } T$ as above and using the action-angle coordinates to express the metric as~(\ref{ActionAnglemetric}), the scalar curvature on $\mathring{M}$ is the pull-back by $\mu$ of the function \begin{equation}\label{abreuForm}
%S(G_g)=-\sum_{ij} \frac{\del^2 H_{ij}}{\del \mu_i \del \mu_j}.
%\end{equation}
This result follows from two lemmas: \cite[Lemma 2]{H2FII} which holds regardless the completeness of the metric and thus can be used as such here (Lemma~\ref{lemma2H2FII} below) and \cite[Lemma 3]{H2FII}, which we adapt to toric K\"ahler cones (Lemma~\ref{lemAAcoordNOTIMPORTANT} below). Then, we prove Proposition~\ref{propHbSasakREVERSE} which is the adaptation of \cite[Proposition 1]{H2FII}.

\begin{rem}
For this adaptation, it is more natural to work with $S^2\mathfrak{t}$--valued functions $\bfG=(G_{ij})$ without requiring that it is the Hessian of a potential $\phi \in C^{\infty}(\mathring{\mP})$. The metric $g$ defined via (\ref{ActionAnglemetric}) is then a {\it almost K\"ahler} metric.
\end{rem}

\begin{lemma}\cite[Lemma 2]{H2FII} \label{lemma2H2FII}
Let $(M,\omega)$ be a toric symplectic $2n$--manifold or orbifold with moment map $\mu\co M\ra \mP\subset\kt^*$ and suppose that $(g_0, J_0)$, $(g, J)$ are compatible almost K\"ahler metrics on $\mathring{M}=\mu^{-1}(\mathring{\mP})$ of the form (\ref{ActionAnglemetric}), given by $\bfG_0$, $\bfG$ and the same angular coordinates, and such that $(g_0, J_0)$ extends to an almost K\"ahler metric on $M$. Then $(g, J)$ extends to an almost K\"ahler metric on $M$ provided that $\bfG \bfG_0$ and $\bfG_0\bfH \bfG_0 - \bfG_0$ are smooth on $\mP$. \end{lemma}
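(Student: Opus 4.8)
The statement is a local smoothness assertion near $\partial\mP$, so the plan is to reduce it to the smoothness of a single comparison endomorphism and then, face by face, to an elementary one--variable model.

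\emph{Step 1 (reformulation).} Since $g_0$ is a smooth metric on $M$ and $\omega$ is globally smooth, $g$ extends to a smooth metric on $M$ if and only if the relative endomorphism $A\co TM\ra TM$, defined on $\mathring M$ by $g(\cdot,\cdot)=g_0(A\cdot,\cdot)$, extends to a smooth, $g_0$--symmetric, positive endomorphism field. Indeed, if $A$ is such, then $g=g_0(A\cdot,\cdot)$ is a smooth metric, and the compatible almost complex structure $J$, determined on $\mathring M$ by $\omega=g(J\cdot,\cdot)$, is built algebraically from the smooth nondegenerate fields $g$ and $\omega$; it therefore extends smoothly, and the identities $J^2=-\mathrm{id}$ and $\omega=g(J\cdot,\cdot)$ propagate from the dense set $\mathring M$ by continuity. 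Hence $(g,J)$ is almost K\"ahler on $M$.

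\emph{Step 2 (what the hypotheses give).} In the shared action--angle frame $(\partial_{\mu_i},\partial_{t_i})$ both metrics are block diagonal, $g_0=\mathrm{diag}(\bfG_0,\bfH_0)$ and $g=\mathrm{diag}(\bfG,\bfH)$ with $\bfH=\bfG^{-1}$, $\bfH_0=\bfG_0^{-1}$, so that $A=g_0^{-1}g=\mathrm{diag}(\bfH_0\bfG,\ \bfG_0\bfH)$. Here $\bfH_0$ and $\bfH$ are smooth up to $\partial\mP$ (they are the restrictions of $g_0$, resp.\ $g$, to the vertical distribution), whereas $\bfG_0,\bfG$ blow up there. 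The two hypotheses are exactly what forces the two blocks of $A$ to extend smoothly to the closed polytope. For the angular block this is transparent: right--multiplying by the smooth matrix $\bfH_0$ yields the identity $(\bfG_0\bfH\bfG_0-\bfG_0)\bfH_0=\bfG_0\bfH-\mathrm{id}$, so the smoothness of $\bfG_0\bfH\bfG_0-\bfG_0$ on $\mP$ forces $\bfG_0\bfH$ to be smooth on $\mP$ and to tend to $\mathrm{id}$ along the directions collapsing at the boundary. The remaining hypothesis is designed to control the radial block $\bfH_0\bfG$ in the same way.

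\emph{Step 3 (from $\mP$ to $M$, and the obstacle).} It remains to promote ``each block of $A$ is smooth on $\mP$'' to ``$A$ is smooth on $M$''. This is local; using the goodness of $\mC$ (Definition~\ref{defnGOODcone}) I would, near the interior of a face $F_I$, change the lattice basis so that $\{u_i\}_{i\in I}$ is part of a basis, exhibiting the orbifold chart in which each collapsing circle $t_i$ becomes the argument of a standard complex coordinate $z_i=\sqrt{\ell_i}\,e^{\sqrt{-1}\,t_i}$, with $\ell_i$ the affine defining function of $F_i$. Rewriting $\partial_{\mu_i},\partial_{t_i}$ in terms of $\partial_{x_i},\partial_{y_i}$ (a change that blows up like $1/\ell_i$) and inserting the block form of $A$, one checks that every entry of $A$ in the smooth frame becomes a homogeneous degree--zero ratio whose numerator is divisible by the denominator precisely because the radial and angular blocks agree to leading order along the collapsing directions. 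Direction by direction this reduces to the flat model $\bC$, where $\tfrac{x^2\alpha+y^2\beta}{x^2+y^2}$ and $\tfrac{xy(\alpha-\beta)}{x^2+y^2}$ are smooth as soon as $\alpha,\beta$ are smooth on $\mP$ and $\alpha=\beta$ on the facet, while positivity of $A$ survives because its blocks tend to $\mathrm{id}$ there. The main obstacle is this last step at the higher--codimension faces, where several circles collapse at once: one must then control the mixed entries of $A$ that couple distinct collapsing directions, and couple collapsing with non--collapsing ones, arranging every a priori singular term so that it is absorbed into the smooth combinations supplied by the hypotheses. This bookkeeping is the technical heart of the proof.
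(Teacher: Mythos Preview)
The paper does not supply a proof of this lemma: it is quoted from \cite[Lemma~2]{H2FII} and invoked as a black box, with the sole remark that it ``holds regardless the completeness of the metric and thus can be used as such here.'' There is therefore no in-paper argument to compare against.

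Your sketch follows the strategy of the original reference---compare $g$ to $g_0$ through the relative endomorphism $A=g_0^{-1}g$, then verify smoothness of $A$ in charts adapted to the faces of $\mP$---and this is the right shape. A few places are loose. In Step~2 you assert that $\bfH$ is smooth up to $\partial\mP$; that is not given (only $g_0$ extends), though you do recover the smoothness of $\bfG_0\bfH$ from the second hypothesis, which is what the angular block actually needs. You also wave past how the first hypothesis controls the radial block $\bfH_0\bfG$: ``designed to control \dots\ in the same way'' is not an argument, and $\bfG\bfG_0$ is not the same object as $\bfH_0\bfG$. Finally, you correctly identify Step~3 as the technical heart and give the right one-circle model (torus invariance makes $\alpha-\beta$ divisible by $r^2$ once it vanishes on the facet), but the passage to higher-codimension faces, where several circles collapse simultaneously and mixed terms appear, is precisely where the work lies and is not carried out. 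So the outline is sound and aligned with the cited source, but it remains a sketch rather than a proof.
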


%For the rest of this section, $(N, \bfD, g,\hat{T})$ is a toric Sasaki manifold corresponding to the toric K\"ahler cone $(M,\hat{\omega},\hat{g},\tau, \hat{T},\hat{\mu})$ with $M=\bfD^o_+$. We will adapt~\cite[Lemma 3]{H2FII} to the conical setting using that $\hat{g}$ is determined by its restriction to its {\it Sasakian link} $N=\{ p\in M \,|\, g_p(\tau,\tau) =1\}$.

\begin{lemma} \label{lemAAcoordNOTIMPORTANT}
Let $(M,\hat{\omega},\tau)$ be a toric symplectic cone over a compact co-oriented contact manifold of Reeb type with two cone K\"ahler metrics inducing the same $S^2\hat{\mathfrak{t}}$--valued function $\hat{\bfG}$ on the interior of the moment cone. Then there exists an equivariant symplectomorphism of $(M,\hat{\omega})$ commuting with $\tau$ and sending one metric to the other.
\end{lemma}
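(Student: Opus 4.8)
The plan is to adapt the construction of \cite[Lemma 3]{H2FII} to the cone setting, producing the desired symplectomorphism as an explicit gauge transformation of the principal $\hat T$--bundle $\hat\mu\co\mathring M\ra\mathring{\mC}$ over $\mathring M=\hat\mu^{-1}(\mathring{\mC})$, and then extending it across the degenerate locus $\hat\mu^{-1}(\partial\mC\setminus\{0\})$.

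First I would fix action--angle coordinates. Let $(g,\hat J)$ and $(g_0,\hat J_0)$ be the two cone K\"ahler metrics, both of the form (\ref{ActionAnglemetric}) with the \emph{same} $\hat{\bfG}$ (hence the same $\hat{\bfH}=\hat{\bfG}^{-1}$), but with angular coordinates $t_i$, $t^0_i$ determined by the respective horizontal distributions $\hat J X_{e_i}$, $\hat J_0 X_{e_i}$. Since both metrics are K\"ahler, these distributions are integrable, so the $1$--forms $dt_i$, $dt^0_i$ are closed, and both are action--angle systems for the same $\hat\omega=\sum_i d\mu_i\wedge dt_i=\sum_i d\mu_i\wedge dt^0_i$. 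Then $\beta_i:=dt_i-dt^0_i$ is closed, $\hat T$--invariant and horizontal ($\beta_i(X_{e_j})=0$), hence basic; as $\mathring{\mC}$ is convex, thus simply connected, $\beta_i=df_i$ for functions $f_i$ on $\mathring{\mC}$. Equating the two expressions for $\hat\omega$ gives $\sum_i d\mu_i\wedge\beta_i=0$, i.e. $\partial f_i/\partial\mu_j=\partial f_j/\partial\mu_i$, so $f_i=\partial\psi/\partial\mu_i$ for a single function $\psi$ on $\mathring{\mC}$.

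Next I would pin down the homogeneity forced by the cone structure. Writing the compatibility $\hat J X_b=\tau$, $X_b=\sum_i b_i\,\partial/\partial t_i$, together with $\hat J(\partial/\partial t_i)=-\sum_k \hat H_{ki}\,\partial/\partial\mu_k$ and $\mL_\tau\hat\mu=2\hat\mu$, shows that in either action--angle system the Liouville field is purely radial, $\tau=2\sum_k\mu_k\,\partial/\partial\mu_k$ (in particular the two metrics share the same Reeb vector $b$). Consequently $\iota_\tau dt_i=\iota_\tau dt^0_i=0$, whence $\iota_\tau\beta_i=\tau(f_i)=0$: each $f_i$, and therefore $\nabla\psi=\sum_i f_i e_i\in\hat{\mathfrak t}$, is homogeneous of degree $0$ in $\mu$. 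I then define $\Phi\co\mathring M\ra\mathring M$ as the fibrewise translation $(\mu,t)\mapsto(\mu,\,t-\nabla\psi(\mu))$, i.e. $p\mapsto \exp(-\nabla\psi(\hat\mu(p)))\cdot p$. A direct check gives $\Phi^*\hat\omega=\hat\omega$ (the cross terms cancel by symmetry of $\mathrm{Hess}\,\psi$), $\Phi^*g=g_0$ (substituting $dt^0_i=dt_i-\sum_j\psi_{ij}d\mu_j$ into the expression for $g_0$ reproduces $\Phi^*g$), and $\Phi$ is $\hat T$--equivariant since it preserves $\hat\mu$ and shifts the angle by a function of $\mu$; finally $\Phi$ commutes with $\tau$ exactly because $\nabla\psi$ is homogeneous of degree $0$, so that $\Phi\circ\phi^s_\tau=\phi^s_\tau\circ\Phi$ with $\phi^s_\tau(\mu,t)=(e^{2s}\mu,t)$.

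The main obstacle is the extension of $\Phi$ across the singular locus $\hat\mu^{-1}(\partial\mC\setminus\{0\})$, where the $t_i$ degenerate. Here I would use that $\Phi$ is by construction an isometry from $(\mathring M,g_0)$ onto $(\mathring M,g)$, that both $g$ and $g_0$ extend to smooth metrics on all of $M$ (this is where the boundary conditions (\ref{condCOMPACTIFonH}) of Definition~\ref{defSympPot}, controlling the degeneration of $\hat{\bfH}$ along each facet, enter, exactly as in Lemma~\ref{lemma2H2FII}), and that $\mathring M$ is dense in $M$ with complement of real codimension $2$. A local isometry between dense opens of two smooth Riemannian manifolds extends continuously, hence smoothly, across a codimension-$2$ complement; applied here this yields a smooth extension $\Phi\co M\ra M$. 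By continuity the identities $\Phi^*\hat\omega=\hat\omega$, the $\hat T$--equivariance and the commutation with $\tau$ persist on all of $M$, giving the required equivariant symplectomorphism intertwining the two metrics. I expect the bookkeeping of the facetwise degeneration, i.e. verifying that the angle translation $-\nabla\psi$ descends to a well-defined smooth automorphism over $\hat\mu^{-1}(\partial\mC)$, to be the delicate point, and would handle it exactly along the lines of \cite[Lemma 3]{H2FII}, the only genuinely new ingredient being the degree-$0$ homogeneity of $\nabla\psi$ that secures commutation with $\tau$.
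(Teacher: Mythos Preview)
Your construction of $\Phi$ on $\mathring M$ is correct and is precisely (a more explicit version of) what the paper calls ``the equivariant symplectomorphism of $(\mathring M,\hat\omega)$ sending one set of action--angle coordinates to the other''; in particular your observation that $\nabla\psi$ is homogeneous of degree $0$, hence $\Phi$ commutes with $\tau$, is exactly the point the paper uses.

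The divergence is in the extension step. Your sentence ``a local isometry between dense opens of two smooth Riemannian manifolds extends continuously, hence smoothly, across a codimension-$2$ complement'' is not a standard theorem when the ambient metrics are neither complete nor the ambient manifold compact, and the cone metric $\hat g$ is incomplete (as $r\to 0$) on the non-compact $M$. Falling back on ``exactly along the lines of \cite[Lemma 3]{H2FII}'' is also not quite enough as stated, since that lemma is proved on a compact toric symplectic manifold; you would need to redo its local analysis on the cone, which is doable but is additional work you have not carried out.

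The paper sidesteps this by exploiting the very homogeneity you established: since $\Phi$ commutes with $\tau$ and both metrics satisfy $\hat g(\tau,\tau)=\hat g'(\tau,\tau)=4\sum_{ij}\hat\mu_i\hat\mu_j\hat G_{ij}$, the level set $N=\{\hat g(\tau,\tau)=1\}$ is the same for both and is preserved by $\Phi$. One then restricts $\Phi$ to $\mathring N$, obtaining an equivariant isometry between the Sasaki metrics $g$ and $g'$ on a dense open of the \emph{compact} manifold $N$, where the extension is genuinely standard. Finally, since a Sasaki metric determines its cone metric, the extended isometry of $N$ cones up to the desired global map on $M$. This buys you a clean extension argument at the price of one extra observation (the common level set), whereas your route keeps everything on $M$ but leaves the extension across $\partial\mC$ only sketched.
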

 \begin{proof} Let $\hat{g}$ and $\hat{g}'$ be two such metrics, they share the same level set $$\{p\in M\,|\,\hat{g}_p(\tau,\tau)=1\}=\{p\in M\,|\,\hat{g}'_p(\tau,\tau)=1\}\simeq N.$$
Indeed, the part $(\hat{\mu}_1,\dots, \hat{\mu}_n)$ of the action-angle coordinates does not depend on the metric, the Liouville vector field is $\tau=\sum_i 2\hat{\mu}_i\frac{\del}{\del \hat{\mu}_i}$ and thus
$$\hat{g}(\tau,\tau)=4\sum_{i,j=0}^n\hat{\mu}_i\hat{\mu}_j\hat{G}_{ij}= \hat{g}'(\tau,\tau).$$
The equivariant symplectomorphism, say $\psi$, of $(\mathring{M},\hat{\omega})$ sending one set of action-angle coordinates to the other commutes with $\tau$ and sends one metric to the other. Moreover, it restricts to $\mathring{N}$ as an equivariant isometry between $g$ and $g'$ which can then be uniquely extended to a smooth equivariant isometry on $N$ by a standard argument. Finally, since $g$ and $g'$ determine the respective cone metrics, this isometry pull-backs to a global isometry on $M$ which coincides with $\psi$ on $\mathring{M}$.
 \end{proof}

The fact that $(\hat{\omega},\hat{g},\hat{J})$ is cone K\"ahler structure with respect to $\tau$ reads, in terms of $\hat{\bfH}$, as follows:

\begin{lemma}\cite{reebMETRIC} \label{lemReebH}
For any $0\leq i,j\leq n$, $\hat{H}_{ij}$ is homogeneous of degree $1$ with respect to the dilatation in $\kt^*$. Moreover, if $b\in \hat{\mathfrak{t}}$ induces the Reeb vector field $X_b=J\tau$, then for all $\hat{\mu}\in \mC$, $\hat{\bfH}_{\hat{\mu}}(b,\cdot)=2\hat{\mu}$.
\end{lemma}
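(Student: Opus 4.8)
The plan is to derive both statements directly from the homogeneity built into the cone structure, without ever invoking a symplectic potential. Write $\phi_s$ for the flow of $\tau$. Since $\hat\omega$ and $\hat g$ are homogeneous of degree $2$ with respect to $\tau$ (i.e.\ $\mL_\tau\hat\omega=2\hat\omega$ and $\mL_\tau\hat g=2\hat g$) and $\hat\mu$ is homogeneous of degree $2$ (i.e.\ $\mL_\tau\hat\mu=2\hat\mu$), integrating these gives $\phi_s^*\hat g=e^{2s}\hat g$ and $\hat\mu\circ\phi_s=e^{2s}\hat\mu$. The one structural input I would isolate first is that the torus action commutes with $\tau$ (part of the definition of a toric symplectic cone), so that each generator $X_u$, $u\in\hat{\mathfrak{t}}$, is $\phi_s$--invariant: $(\phi_s)_*X_u=X_u$, equivalently $X_u(\phi_s(p))=(\phi_s)_*\big(X_u(p)\big)$.

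For the homogeneity of degree $1$, I would evaluate $\hat\bfH$ at the dilated point $\phi_s(p)$. By definition $\hat\bfH_{\hat\mu(\phi_s(p))}(u,v)=\hat g_{\phi_s(p)}\big(X_u,X_v\big)$; substituting $X_u(\phi_s(p))=(\phi_s)_*X_u(p)$ and using $\phi_s^*\hat g=e^{2s}\hat g$ turns this into $e^{2s}\hat g_p\big(X_u,X_v\big)=e^{2s}\hat\bfH_{\hat\mu(p)}(u,v)$. Since $\hat\mu(\phi_s(p))=e^{2s}\hat\mu(p)$ and $e^{2s}$ sweeps out all of $\bR_{>0}$, this is precisely the assertion that $\hat\bfH$, and hence each entry $\hat H_{ij}$, is homogeneous of degree $1$ on $\mathring{\mC}$.

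For the Reeb identity, I would test $\hat\bfH_{\hat\mu}(b,\cdot)$ against an arbitrary $v\in\hat{\mathfrak{t}}$. Using $X_b=\hat J\tau$ and the compatibility $\hat g(\hat J\cdot,\cdot)=\hat\omega(\cdot,\cdot)$ one gets $\hat\bfH_{\hat\mu(p)}(b,v)=\hat g_p\big(\hat J\tau,X_v\big)=\hat\omega_p(\tau,X_v)$. The moment map relation $d\langle\hat\mu,v\rangle=-\iota_{X_v}\hat\omega$ then yields $\hat\omega_p(\tau,X_v)=d\langle\hat\mu,v\rangle(\tau)=\tau\cdot\langle\hat\mu,v\rangle=\langle\mL_\tau\hat\mu,v\rangle=2\langle\hat\mu,v\rangle$, where the final equality is $\mL_\tau\hat\mu=2\hat\mu$. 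As $v$ is arbitrary, this reads $\hat\bfH_{\hat\mu}(b,\cdot)=2\hat\mu$ in $\hat{\mathfrak{t}}^*$.

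I expect the only delicate points to be bookkeeping rather than substance: pinning down the sign conventions in the compatibility relation and in $d\langle\hat\mu,v\rangle=-\iota_{X_v}\hat\omega$ so that $\hat\omega_p(\tau,X_v)$ comes out to $+2\langle\hat\mu,v\rangle$ rather than its negative, and being explicit that the commutativity $(\phi_s)_*X_u=X_u$ is what allows the degree-$2$ scaling of $\hat g$ to transfer cleanly onto $\hat\bfH$. Neither step needs a genuinely new idea beyond the homogeneity hypotheses already recorded in the set-up.
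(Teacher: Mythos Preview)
Your argument is correct. The paper does not supply its own proof of this lemma; it simply attributes the statement to \cite{reebMETRIC} and uses it as input for Proposition~\ref{propHbSasakREVERSE}. So there is nothing to compare against beyond noting that your derivation matches the conventions fixed in \S\ref{sectSURVEYmetric}: the compatibility $g(J\cdot,\cdot)=\omega(\cdot,\cdot)$ and the moment map sign $d\langle\hat\mu,v\rangle=-\iota_{X_v}\hat\omega$ indeed combine to give $\hat g(\hat J\tau,X_v)=\hat\omega(\tau,X_v)=+2\langle\hat\mu,v\rangle$, and the commutation $[\tau,X_u]=0$ is exactly the ingredient that turns degree-$2$ homogeneity of $\hat g$ into degree-$1$ homogeneity of $\hat\bfH$.
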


 \begin{proposition}\label{propHbSasakREVERSE}
Let $(M,\hat{\omega},\hat{T},\tau)$ be a toric symplectic cone associated to a strictly convex good polyhedral cone $(\mC,\Lambda)$ having inward primitive normals $\hat{u}_1$, ... $\hat{u}_d$. Let $\hat{\bfH}$ be a positive definite $S^2\hat{\mathfrak{t}}^*$--valued function on $\mathring{\mC}$. $\hat{\bfH}$ comes from a $\hat{T}$--invariant almost K\"ahler cone metric $\hat{g}$ on $M$ if and only if
\begin{itemize}
  \item $\hat{\bfH}$ is the restriction to $\mathring{\mC}$ of a smooth $S^2\hat{\mathfrak{t}}^*$--valued function on $\mC$,
  \item for every $y$ in the interior of the facet $\hat{F}_i\subset \mC$,
\begin{equation}\label{condCOMPACTIFsasak}
  \hat{\bfH}_y (\hat{u}_i, \cdot) =0\;\;\;\mbox{ and }\;\;\; d\hat{\bfH}_y (\hat{u}_i, \hat{u}_i) =2\hat{u}_i,
\end{equation}
\item the restriction of $\hat{\bfH}$ to the interior of any face $\hat{F} \subset \mC$ is a positive definite $S^2(\hat{\mathfrak{t}}/\hat{\mathfrak{t}}_F)^*$--valued function,
\item $\hat{\bfH}$ is homogeneous of degree $1$ with respect to the dilatation in $\kt^*$.
\end{itemize}  \end{proposition}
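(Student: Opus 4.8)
The plan is to prove both implications by reducing to the compact (orbifold) case, Proposition~\ref{prop1H2FII}, isolating the cone structure in the single homogeneity condition which is governed by Lemma~\ref{lemReebH}. The other three conditions -- smoothness of $\hat{\bfH}$ up to $\mC$ together with the boundary conditions (\ref{condCOMPACTIFsasak}), and positivity along faces -- are local near the singular strata $\hat{\mu}^{-1}(\partial\mC\setminus\{0\})$ and have exactly the same form as the conditions (\ref{condCOMPACTIFonH}) of Definition~\ref{defSympPot}. Since the arguments of \cite{H2FII} for these conditions are purely local and do not use compactness, they transfer verbatim, and the only genuinely new ingredient for cones is homogeneity.

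For necessity, suppose $\hat{\bfH}$ is induced by a $\hat{T}$--invariant almost K\"ahler cone metric $\hat{g}$ on $M$. Because $\hat{\bfH}_{\hat{\mu}(p)}(u,v)=\hat{g}_p(X_u,X_v)$ is smooth in $p$ and descends through the moment map, $\hat{\bfH}$ is smooth up to $\partial\mC$; the boundary conditions (\ref{condCOMPACTIFsasak}) are forced by smoothness of $\hat{g}$ across the strata where the corresponding subtorus fails to act freely (the standard Abreu--Guillemin local computation), and positivity along a face is the positivity of the metric induced on the associated invariant suborbifold. Finally, the cone condition $\mL_{\tau}\hat{g}=2\hat{g}$ says that $\hat{g}$ is homogeneous of degree $2$, which is precisely the statement of Lemma~\ref{lemReebH} that $\hat{\bfH}$ is homogeneous of degree $1$.

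For sufficiency, I would first fix a reference metric: choose any $\hat{T}$--invariant K\"ahler cone metric $\hat{g}_0$ on $M$ -- for instance the one induced by the canonical potential $\hat{\phi}_0=\tfrac{1}{2}\sum_i \ell_i\log\ell_i$ with $\ell_i=\langle \hat{u}_i,\cdot\rangle$, whose inverse Hessian $\hat{\bfH}_0$ satisfies the four conditions -- and set $\hat{\bfG}_0=\hat{\bfH}_0^{-1}$. Given $\hat{\bfH}$ as in the statement, put $\hat{\bfG}=\hat{\bfH}^{-1}$ on $\mathring{\mC}$ and let $\hat{g}$ be the almost K\"ahler metric defined on $\mathring{M}$ by (\ref{ActionAnglemetric}) in the angular coordinates of $\hat{g}_0$; positivity of $\hat{\bfH}$ makes this well defined. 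The homogeneity of $\hat{\bfH}$ then makes $\hat{g}$ a cone metric: writing $\tau=\sum_i 2\hat{\mu}_i\,\partial/\partial\hat{\mu}_i$ and using $\mL_{\tau}d\hat{\mu}_i=2\,d\hat{\mu}_i$ and $\mL_{\tau}dt_i=0$, Euler's relation for the degrees $-1$ and $+1$ of $\hat{\bfG}$ and $\hat{\bfH}$ gives $\mL_{\tau}\hat{g}=2\hat{g}$. To extend $\hat{g}$ across the singular strata I would invoke Lemma~\ref{lemma2H2FII} with $(\hat{g}_0,\hat{J}_0)$ as background: it remains to check that $\hat{\bfG}\hat{\bfG}_0$ and $\hat{\bfG}_0\hat{\bfH}\hat{\bfG}_0-\hat{\bfG}_0$ extend smoothly to $\mC$. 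This is where the boundary conditions (\ref{condCOMPACTIFsasak}) and the face positivity are used, and since $\hat{\bfH}$ and $\hat{\bfH}_0$ share the same leading boundary behaviour these products are smooth by the same local argument as in \cite{H2FII}. Lemma~\ref{lemma2H2FII} then produces a global almost K\"ahler extension on $M$, which by the previous step is a cone metric inducing $\hat{\bfH}$.

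The main obstacle is this last verification, the smoothness of $\hat{\bfG}\hat{\bfG}_0$ and $\hat{\bfG}_0\hat{\bfH}\hat{\bfG}_0-\hat{\bfG}_0$ up to $\partial\mC$: it is delicate because $\hat{\bfG}$ and $\hat{\bfG}_0$ individually blow up at the boundary, so smoothness of the products hinges on their degeneracies cancelling exactly -- a fact encoded in the second boundary condition $d\hat{\bfH}_y(\hat{u}_i,\hat{u}_i)=2\hat{u}_i$ and in the positivity on faces. Everything else is either a local computation identical to the compact setting or the homogeneity bookkeeping supplied by Lemma~\ref{lemReebH}; the uniqueness implicit in the resulting correspondence is then covered by Lemma~\ref{lemAAcoordNOTIMPORTANT}.
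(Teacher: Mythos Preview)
Your proposal is correct and follows essentially the same route as the paper: necessity via \cite[Proposition~1]{H2FII} plus Lemma~\ref{lemReebH} for the homogeneity, and sufficiency by picking a reference cone metric, defining $\hat{g}$ on $\mathring{M}$ in its angular coordinates, extending across $\partial\mC$ with Lemma~\ref{lemma2H2FII}, and invoking Lemma~\ref{lemAAcoordNOTIMPORTANT}. The only cosmetic difference is that the paper, rather than naming the Guillemin potential as the reference, singles out a basis of $\hat{\mathfrak{t}}$ containing the Reeb vector $b$ (already read off from $\hat{\bfH}$ via $\hat{\bfH}(b,\cdot)=2\hat{\mu}$) together with the normals to an edge; in that basis the Reeb row of $\hat{\bfH}$ is already smooth and the Taylor analysis reduces cleanly to the transversal block, where the \cite{H2FII} computation applies verbatim. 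Your formulation and the paper's are two ways of packaging the same local verification.
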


\begin{proof}
The necessary part follows from~\cite[Proposition 1]{H2FII} and Lemma~\ref{lemReebH} above. For the converse, take a basis of $\kt$ consisting of the Reeb vector $b$ (already determined by $\hat{\bfH}$ on $\mathring{M}$) and the normals associated to an edge of $\mC$. Following the proof of~\cite[Proposition 1]{H2FII}, we consider the first few terms of the Taylor series of the entries of $\hat{\bfH}$ in this basis. Then, we conclude the proof with Lemmas ~\ref{lemma2H2FII} and~\ref{lemAAcoordNOTIMPORTANT}.
\end{proof}

 For $b\in\mC^*_+$, the {\it Boothy--Wang symplectic potential} $\hat{\phi}$ of $\phi\in\mS(\Delta_b,u_b)$ is the function $$\hat{\phi}(\hat{\mu}) =  2\langle \hat{\mu},b\rangle \cdot \phi\left(\frac{\hat{\mu}}{2\langle \hat{\mu},b\rangle}\right) + \frac{\langle \hat{\mu},b\rangle}{2} \log\langle \hat{\mu},b\rangle$$ defined on the interior of the cone $\mC$, see e.g.~\cite{abreuSasakAAcoord}. The following proposition adapts the relation between potentials via reduction given in~\cite{CDG} to the case of cone metrics and non necessarily quasi-regular Reeb vector fields. We give a proof below since this statement does not appear in this form in the literature and is central in our study.

\begin{proposition}Let $(\mC,\Lambda)$ be a strictly convex good cone with $b\in \mC^*_+$. Denote $(\Delta_b,u_b)$ the characteristic labeled polytope of $(\mC,\Lambda)$ at $b$. $\hat{\bfH}$ is a positive definite $S^2\hat{\mathfrak{t}}^*$--valued function on $\mC$ coming from a $\hat{T}$--invariant almost K\"ahler cone metric $\hat{g}$ on $M$ with Reeb vector field $X_b$ if and only if
\begin{equation}\label{reductionH}
  \hat{\bfH}_{\hat{\mu}}= 2\langle \hat{\mu},b\rangle \bfH^b_{\mu} + 2\frac{\hat{\mu} \otimes\hat{\mu} }{\langle \hat{\mu},b\rangle}
\end{equation} where $\mu=\frac{\hat{\mu}}{2\langle \hat{\mu},b\rangle}\in\Delta_b$ and $\bfH^b$ is a positive definite $S^2 (\hat{\mathfrak{t}}/\bR b)^*$--valued function on $\Delta_b$ satisfying the conditions of Proposition~\ref{prop1H2FII} with respect to $(\Delta_b,u_b)$.

Moreover, in that case, $\hat{\bfH}$ is the Hessian's inverse of a symplectic potential $\hat{\phi}$ if and only if $\hat{\phi}$ is the Boothy--Wang symplectic potential of $\phi\in\mS(\Delta_b,u_b)$ whose Hessian's inverse is $\bfH^b$. Finally, $\check{g}(X_a,X_c)= \bfH^b(a,c)$ on $N$.
\end{proposition}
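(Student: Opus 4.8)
The plan is to read the proposition as the superposition of two bijections: an algebraic correspondence between the positive $S^2\hat{\mathfrak{t}}^*$-valued functions of the form (\ref{reductionH}) and those admissible in Proposition~\ref{propHbSasakREVERSE}, and its refinement to potentials via the Boothy--Wang formula. First I would treat (\ref{reductionH}) purely algebraically. Fixing $b\in\mC^*_+$, write $s=\langle\hat{\mu},b\rangle$ and $\mu=\hat{\mu}/2s\in\Delta_b$, and view $\bfH^b_\mu$ as a form on $\hat{\mathfrak{t}}$ through the projection $\hat{\mathfrak{t}}\to\hat{\mathfrak{t}}/\bR b$. Given $\bfH^b$, define $\hat{\bfH}$ by (\ref{reductionH}). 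Homogeneity of degree $1$ is immediate, since $\mu$ depends only on the ray of $\hat{\mu}$ and both summands scale like $s$. Evaluating $\hat{\bfH}_{\hat\mu}(b,\cdot)$ annihilates the first term (as $b\mapsto 0$ in $\hat{\mathfrak{t}}/\bR b$) while the second gives $2\hat{\mu}$, which recovers the Reeb normalization of Lemma~\ref{lemReebH}. This same computation inverts the construction: solving (\ref{reductionH}) for $\bfH^b$ and using $\hat{\bfH}_{\hat\mu}(b,\cdot)=2\hat{\mu}$ shows $b$ lies in the kernel, so $\bfH^b$ descends to $\hat{\mathfrak{t}}/\bR b$, and degree-$1$ homogeneity of $\hat{\bfH}$ makes it depend on $\mu$ alone.

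The core of the equivalence is then to match the facet conditions (\ref{condCOMPACTIFsasak}) on $\mC$ with (\ref{condCOMPACTIFonH}) on $\Delta_b$. On the interior of a facet $\hat{F}_i$ one has $\langle\hat{u}_i,y\rangle=0$ and $\mu\in F_i$, so the rank-one term drops and $\hat{\bfH}_y(\hat{u}_i,\cdot)=2s\,\bfH^b_\mu(u_{bi},\cdot)=0$ directly from $\bfH^b_\mu(u_{bi},\cdot)=0$. The delicate identity is $d\hat{\bfH}_y(\hat{u}_i,\hat{u}_i)=2\hat{u}_i$: differentiating $f(\hat{\mu})=\hat{\bfH}_{\hat\mu}(\hat{u}_i,\hat{u}_i)$ and using $\langle\hat{u}_i,y\rangle=0$ and $\bfH^b_\mu(u_{bi},u_{bi})=0$ on $F_i$ leaves only $df|_y=2s\,d_{\hat\mu}[\bfH^b_\mu(u_{bi},u_{bi})]$; the chain rule through $\mu=\hat{\mu}/2s$, together with the identification of $\hat{\mathfrak{t}}/\bR b$ with the annihilator of $b$ and the boundary value $d\bfH^b_\mu(u_{bi},u_{bi})=2u_{bi}$, collapses this to $2s\cdot\tfrac{1}{s}\hat{u}_i=2\hat{u}_i$. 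For positivity on a face $\hat{F}$, the relations $\hat{\bfH}_y(\hat{u}_i,\cdot)=0$ ($i\in I$) show $\hat{\mathfrak{t}}_{\hat F}\subset\ker\hat{\bfH}_y$, so $\hat{\bfH}_y$ descends to $\hat{\mathfrak{t}}/\hat{\mathfrak{t}}_{\hat F}$; there $2s\,\bfH^b_\mu$ is nonnegative with kernel spanned by $[b]$, while $2\hat{\mu}\otimes\hat{\mu}/s$ is positive along $b$ (value $2s$), so their sum is positive definite. Smoothness is inherited from $\bfH^b$ since $\Delta_b$ is a compact slice on which $s$ is bounded away from $0$. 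Invoking Proposition~\ref{propHbSasakREVERSE} and Proposition~\ref{prop1H2FII} in both directions then yields the equivalence.

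For the ``moreover'' part I would verify the potential statement by a direct Hessian computation in a basis $b=f_0,f_1,\dots,f_n$ of $\hat{\mathfrak{t}}$ with dual coordinates $x_0=s,x_1,\dots,x_n$, so that $\mu=(\mu_1,\dots,\mu_n)$ with $\mu_j=x_j/2x_0$. Writing the Boothy--Wang potential $\hat{\phi}=2x_0\phi(\mu)+\tfrac{x_0}{2}\log x_0$ and abbreviating $\Phi=\mbox{Hess}\,\phi$, a short computation gives
\begin{equation*}
\mbox{Hess}\,\hat{\phi}=\frac{1}{2x_0}\begin{pmatrix} 4\,\mu^{\top}\Phi\mu+1 & -2(\Phi\mu)^{\top}\\ -2\Phi\mu & \Phi\end{pmatrix}.
\end{equation*}
The crux is that the Schur complement of the $\Phi$-block equals $1$, so block inversion gives $(\mbox{Hess}\,\hat{\phi})^{-1}=2x_0\left(\begin{smallmatrix} 1 & 2\mu^{\top}\\ 2\mu & \Phi^{-1}+4\mu\mu^{\top}\end{smallmatrix}\right)$, which is exactly (\ref{reductionH}) with $\bfH^b=\Phi^{-1}=(\mbox{Hess}\,\phi)^{-1}$, since $\hat{\mu}=x_0(1,2\mu)$ makes $2\hat{\mu}\otimes\hat{\mu}/s$ the rank-one correction; the converse reads off the same identity. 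Finally, the transversal identity follows by restriction to $N$, where $s=\tfrac12$ (indeed $\hat{g}(\tau,\tau)=\hat{g}(X_b,X_b)=\hat{\bfH}(b,b)=2s$ by Lemma~\ref{lemReebH}) and the contact form satisfies $\eta_b(X_a)=\langle a,\hat{\mu}\rangle/s=2\langle a,\hat{\mu}\rangle$, so that $\check{g}(X_a,X_c)=\hat{g}(X_a,X_c)-\eta_b(X_a)\eta_b(X_c)=\hat{\bfH}(a,c)-4(\hat{\mu}\otimes\hat{\mu})(a,c)$; evaluating (\ref{reductionH}) at $s=\tfrac12$ cancels the rank-one term and leaves $\bfH^b(a,c)$.

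The main obstacle I anticipate is the equivalence itself rather than the potential computation: because $X_b$ need not be quasi-regular, there is in general no honest symplectic quotient to which the reduction arguments of \cite{CDG} apply, so the correspondence must be established at the level of the $S^2$-valued functions and their boundary jets, using Proposition~\ref{propHbSasakREVERSE} as a substitute for the orbifold reduction. Concretely, the care is all in the normal-derivative condition $d\hat{\bfH}_y(\hat{u}_i,\hat{u}_i)=2\hat{u}_i$ and in keeping straight the dual pairings among $\hat{\mathfrak{t}}/\bR b$, the annihilator of $b$, and the tangent space to the slice $\Delta_b$, since these are precisely what make the chain rule for $\mu=\hat{\mu}/2\langle\hat{\mu},b\rangle$ produce the factor $1/s$ needed to match the boundary data.
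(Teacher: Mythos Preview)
Your proof is correct and follows essentially the same route as the paper: choose a basis $(b=e_0,e_1,\dots,e_n)$, express $\hat{\bfH}$ in block form, and verify that the conditions of Proposition~\ref{propHbSasakREVERSE} for $\hat{\bfH}$ match those of Proposition~\ref{prop1H2FII} for $\bfH^b$. The paper is terser---it cites \cite{TGQ} for the necessary direction, writes the block matrices for $\hat{\bfH}$ and $\hat{\bfG}$ explicitly, and declares the boundary, positivity, and Hessian checks ``elementary''---whereas you carry these out (including the normal-derivative identity and the transversal formula $\check{g}(X_a,X_c)=\bfH^b(a,c)$, which the paper states without argument), and you reach the inverse Hessian via a Schur-complement inversion of $\mathrm{Hess}\,\hat{\phi}$ rather than by writing $\hat{\bfG}$ directly.
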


\begin{proof}
  The necessary part of the first affirmation follows from~\cite{TGQ}. For the converse, take a basis $(e_0, e_1, \dots ,e_n)$ of $\hat{\kt}$ such that $e_0=b$, the Reeb vector. Consider the corresponding coordinates $(y_0,y_1,\dots, y_n)$ on $\hat{\kt}^*$ and write $\hat{\bfH}$ and its inverse $\hat{\bfG}$ as matrices using~(\ref{reductionH})

$$\hat{\bfH}=  \!2\!\left(
\begin{array}{c|c}
\!\! y_0\!\! & y_1  \;\dots\;  y_n\\
\hline
\\
y_1 &\\
\vdots & y_0\bfH^b \!+\!\frac{y_iy_j}{y_0}\\
\\
y_n &\\
\end{array}
\right) \;\; \hat{\bfG}= \frac{1}{2y_0^3}\! \left(
\begin{array}{c|c}
\!\! y_0^2+y_iy_jG_{ij}\!\! & -y_0y_iG_{i1} \;\dots\; -y_0y_iG_{in}\\
\hline
\\
 -y_0y_jG_{1j} &\\
\vdots & \; y_0^2\bfG^b \;\\
\\
-y_0y_jG_{nj} &\\
\end{array}
\right)$$
where $\bfG^b$ stands for the inverse of $\bfH^b$. It is then elementary to check that $\hat{\bfH}$ satisfies the conditions of Proposition~\ref{propHbSasakREVERSE} as soon as $\bfH^b$ satisfies the conditions of Proposition~\ref{prop1H2FII} and that $\hat{\bfG}$ is the Hessian of $\hat{\phi}$ if $\bfG^b$ is the Hessian of $\phi$.
\end{proof}
\begin{rem}If $\bR b\cap \Lambda \neq \{0\}$ then the transversal metric $\check{g}$ induced by $\hat{g}$ on the orbifold $\mZ_b$ is associated to $2\langle \hat{\mu},b\rangle\bfH^b$, see~\cite{TGQ}.\end{rem}

\subsection{Toric cscS metrics and uniqueness}\label{cscSuniq}

A Sasaki structure $(N,\bfD,g)$ implies three Riemannian structures: The Riemannian metric $g$ on the contact manifold $N$, the transversal metric $\check{g}$ on the contact bundle and the Riemannian metric $\hat{g}$ on the symplectization $M=\bfD^o_+$. The respective scalar curvature are related to each other by
$$s_{\hat{g}_{|_{N}}} = s_{\check{g}} - 4n(n+1) = s_{g} - 2n(2n+1),$$ see~\cite{BG:book}. On the other hand, the scalar curvature of a toric K\"ahler cone metric $\hat{g}$ with symplectic potential $\hat{\phi}$ is given by the pull-back of $S(\hat{\phi})$ defined by the Abreu formula~(\ref{abreuForm}) and if $\hat{\phi}$ is the Boothy--Wang potential of $\phi\in\mS(\Delta_b,u_b)$ then $$S(\hat{\phi}) (\hat{\mu})= \frac{1}{\langle \hat{\mu},b\rangle}\left(2S(\phi)\left(\frac{\hat{\mu}}{\langle \hat{\mu},b\rangle}\right) - 2n(n+1) \right),$$
see~\cite{abreu, abreuSasakAAcoord}. We now give a straightforward corollary of the uniqueness of the solutions to Abreu's equation on labeled polytopes, due to Guan \cite{guan} and the formulas above.
\begin{lemma}\label{lemUNIQtransv} Up to an equivariant contactomorphism, for each $b \in \mC^*_+$ there exists at most one toric cscS metric with Reeb vector field $X_b$.
\end{lemma}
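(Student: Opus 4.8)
The plan is to reduce the statement to the uniqueness of solutions of Abreu's equation on the characteristic labeled polytope $(\Delta_b,u_b)$, and then to transport that uniqueness from the transversal level back up to the contact manifold using the reduction formula~(\ref{reductionH}) and Lemma~\ref{lemAAcoordNOTIMPORTANT}. First I would translate the problem. Fix $b\in\mC^*_+$ and suppose $g$ and $g'$ are two toric cscS metrics with Reeb vector field $X_b$. By the cone construction each corresponds to a $\hat T$--invariant K\"ahler cone metric, $\hat g$ and $\hat g'$, whose $S^2\hat\kt^*$--valued functions $\hat\bfH$, $\hat\bfH'$ satisfy the reduction formula~(\ref{reductionH}) for transversal data $\bfH^b=(\mbox{Hess}\,\phi)^{-1}$ and $\bfH'^b=(\mbox{Hess}\,\phi')^{-1}$ with $\phi,\phi'\in\mS(\Delta_b,u_b)$. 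Since $s_g=4S(\phi)-2n$, the metric $g$ has constant scalar curvature if and only if $S(\phi)$ is constant on $\Delta_b$; thus $\phi$ and $\phi'$ are both solutions of Abreu's equation $S(\,\cdot\,)=\mbox{const}$ in $\mS(\Delta_b,u_b)$.

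The second step invokes Guan's theorem~\cite{guan}: the solution of Abreu's equation with constant scalar curvature in $\mS(\Delta_b,u_b)$ is unique up to an affine symmetry of the labeled polytope $(\Delta_b,u_b)$ in the sense of Definition~\ref{defnPolytEquiv}. I would emphasize here that we use only the PDE form of this uniqueness, so that it applies to the possibly non--rational polytope $(\Delta_b,u_b)$ that arises when $X_b$ is irregular, with no appeal to the existence of an underlying orbifold. Consequently there is an affine symmetry $A$ of $(\Delta_b,u_b)$ carrying $\bfH^b$ to $\bfH'^b$; via the Delzant--Lerman--Tolman picture and the identification $\kt_b\simeq\hat\kt/\bR b$, such an $A$ is the restriction of an equivariant transformation of $(M,\hat\omega)$ commuting with $\tau$ and fixing $b$. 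Replacing $g$ by its image under the contactomorphism induced by $A$, I may therefore assume $\bfH^b=\bfH'^b$.

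Once $\bfH^b=\bfH'^b$, the reduction formula~(\ref{reductionH}) forces $\hat\bfH=\hat\bfH'$, hence $\hat\bfG=\hat\bfG'$ on $\mathring{\mC}$. Lemma~\ref{lemAAcoordNOTIMPORTANT} then produces an equivariant symplectomorphism of $(M,\hat\omega)$ commuting with $\tau$ and sending $\hat g$ to $\hat g'$; restricting it to the level set $\{\hat g(\tau,\tau)=1\}\simeq N$ yields an equivariant contactomorphism intertwining $g$ and $g'$, which is precisely the assertion.

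The main obstacle is the bookkeeping in the second step: matching the notion of uniqueness that Guan's theorem provides (uniqueness up to affine symmetries of the labeled polytope) with the notion required in the statement (uniqueness up to equivariant contactomorphism), and verifying that every such affine symmetry of $(\Delta_b,u_b)$ genuinely lifts to an equivariant transformation of the cone fixing $b$. The remainder is the formal translation supplied by the reduction proposition and by Lemma~\ref{lemAAcoordNOTIMPORTANT}, both already established above.
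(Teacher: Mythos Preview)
Your overall strategy---reduce to Abreu's equation on $(\Delta_b,u_b)$, invoke Guan, then push back up to the cone---is exactly the paper's approach. But you have misstated Guan's theorem, and this misstatement creates the artificial ``main obstacle'' you worry about.

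Guan's result (as recast by Donaldson in~\cite{don:csc} for labeled polytopes) says that two symplectic potentials $\phi,\phi'\in\mS(\Delta_b,u_b)$ with $S(\phi)$ and $S(\phi')$ both constant differ by an \emph{affine-linear function}: $\phi'=\phi+\ell$ with $\ell$ affine. It does \emph{not} merely say they are related by an affine symmetry of the labeled polytope. Since $\mathrm{Hess}\,\ell=0$, one gets $\mathrm{Hess}\,\phi=\mathrm{Hess}\,\phi'$ and hence $\bfH^b=\bfH'^b$ immediately---no symmetry of $(\Delta_b,u_b)$ needs to be produced, and nothing needs to be lifted to a contactomorphism of the cone. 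Your whole second step, including the Delzant--Lerman--Tolman lifting argument, is therefore unnecessary (and in fact that lifting would be delicate when $X_b$ is irregular, precisely the case you flag).

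With the correct form of Guan's theorem the paper's argument is shorter than yours: from $\phi'-\phi$ affine one checks directly from the Boothby--Wang formula that $\hat\phi'-\hat\phi$ is affine-linear on the cone, hence $\hat\phi$ and $\hat\phi'$ have the same Hessian and define the same K\"ahler cone metric. One does not even need to pass through~(\ref{reductionH}) or Lemma~\ref{lemAAcoordNOTIMPORTANT}; the ``up to equivariant contactomorphism'' clause in the statement is then satisfied by the identity.
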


\begin{proof} If $\hat{\phi}$ and $\hat{\phi}'$ are potentials on the cone $\mC$ associated to K\"ahler cone metrics having the same Reeb vector field $X_b$ then $\hat{\phi}$ and $\hat{\phi}'$ are the Boothy--Wang potentials of functions $\phi$ and $\phi'\in\mS(\Delta_b,u_b)$. $\hat{\phi}$ (respectively $\hat{\phi}'$) defines a toric cscS metric if and only if $S(\phi)$ (respectively $S(\phi')$) is constant. Now, thanks to Guan's uniqueness result~\cite{guan} (recasted in terms of symplectic potentials on labeled polytopes in~\cite{don:csc}), if $S(\phi)$ and $S(\phi')$ are both constant then $\phi-\phi'$ is affine-linear. In that case, $\hat{\phi}-\hat{\phi}'$ is affine-linear and thus $\hat{\phi}$ and $\hat{\phi}'$ define the same K\"ahler cone metric.
\end{proof}
 %Now, $\hat{\phi}-\hat{\phi}'$ is affine-linear if and only if $\phi-\phi'$ is affine-linear.
\begin{proposition}\label{lemUNIQ}
Let $(N,\bfD, \hat{T})$ be a co-oriented compact toric contact manifold of Reeb type with contact moment map $\hat{\mu}\co \bfD^o_+ \ra \hat{\kt}^*$ and moment cone $\mC$. Let $g_a$ and $g_b$ be compatible $\hat{T}$--invariant Sasaki metrics on $N$ with respective vector fields $X_a$ and $X_b$. We suppose that $N$ is not a sphere, that its dimension is at least $5$ and that $(N,\bfD, g_a)$ and $(N,\bfD,g_b)$ are not $3$--Sasaki. If $\varphi \co N \ra N$ is a diffeomorphism such that $\varphi^*g_a=g_b$ then $\varphi$ is a contactomorphism and there exist $\psi\in \mathrm{Isom}(N,g_b)$ and $A\in \mathrm{Gl}(\hat{\kt})$, preserving the lattice $\Lambda=\ker({\rm exp }\co \hat{\kt}\ra \hat{T})$, so that
$$\hat{\mu} \circ (\varphi\circ\psi)^*= A^*\circ\hat{\mu}.$$
In particular, $A^*$ is an automorphism of $\mC$ and $Ab=a$.

Conversely, any linear automorphism of $\mC$ whose adjoint preserves the lattice gives rise to a $T$--equivariant contactomorphism $\psi$ such that if $g$ is a compatible toric Sasaki metric on $(N,\bfD,\hat{T})$ then so is $\psi^*g$.
\end{proposition}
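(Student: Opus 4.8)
The plan is to reduce everything to showing that $\varphi$ is a contactomorphism, and then to extract the linear datum $A$ from a conjugacy--of--maximal--tori argument. Since $\varphi^*g_a=g_b$, the map $\varphi$ is an isometry from $(N,g_b)$ to $(N,g_a)$; pushing forward the Reeb field of $g_b$, the field $\varphi_*X_b$ is a unit Killing field of $g_a$ satisfying all the curvature identities characterising the Reeb field of a compatible Sasaki structure. The hypotheses that $N$ is not a sphere, that $\dim N\geq 5$, and that neither structure is $3$--Sasaki are precisely the conditions under which the Reeb field of a compact Sasaki metric is unique up to sign, see~\cite{BG:book}, the co-orientation of $\bfD$ fixing the sign. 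Hence $\varphi_*X_b=X_a$, so $\varphi$ carries $\ker\eta_b=\bfD$ to $\ker\eta_a=\bfD$ and is a contactomorphism. This rigidity statement is the main obstacle: all later steps are Lie theory or functoriality of the moment map, whereas here the excluded cases (round sphere, $3$--Sasaki) genuinely admit isometries that permute distinct Reeb fields and need not be contact.

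Next I would produce the normalising isometry $\psi$. As $g_a$ and $g_b$ are both $\hat{T}$--invariant, $\hat{T}\subset\mathrm{Isom}(N,g_b)$, and for a toric Sasaki manifold of Reeb type it is a maximal torus of this compact group. Conjugating by $\varphi$ shows $\varphi^{-1}\hat{T}\varphi$ is again a maximal torus of $\mathrm{Isom}(N,g_b)$. By conjugacy of maximal tori in a compact Lie group there is $\psi\in\mathrm{Isom}(N,g_b)$ with $\psi^{-1}(\varphi^{-1}\hat{T}\varphi)\psi=\hat{T}$, i.e. $\Phi:=\varphi\circ\psi$ normalises $\hat{T}$. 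Since $\psi$, being a $g_b$--isometry, fixes the unique Reeb field $X_b$ and hence preserves $\bfD$, the map $\Phi$ is still a contactomorphism, still an isometry from $g_b$ to $g_a$, and $\Phi_*X_b=X_a$. Conjugation $t\mapsto\Phi\,t\,\Phi^{-1}$ is then an automorphism of $\hat{T}$ whose differential $A\in\mathrm{Gl}(\hat{\kt})$ preserves the lattice $\Lambda=\ker(\exp\co\hat{\kt}\to\hat{T})$.

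To finish the first part I would invoke naturality of the contact moment map. The contactomorphism $\Phi$ lifts canonically to a symplectomorphism of the symplectization $\bfD^o_+$ commuting with $\tau$, and it intertwines the $\hat{T}$--action with its image under $A$. Since $\hat{\mu}$ is the unique degree--$2$ homogeneous equivariant moment map, this equivariance forces $\hat{\mu}\circ(\varphi\circ\psi)^*=A^*\circ\hat{\mu}$. Because $\hat{\mu}$ surjects onto $\mC\setminus\{0\}$, this identity shows $A^*$ is an automorphism of $\mC$; and reading off the Reeb data from $\Phi_*X_b=X_a$ (conjugation by $\Phi$ sends the flow of $X_b$ to the flow of $X_{Ab}$, which must be that of $X_a$) gives $Ab=a$ by effectiveness of the action.

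For the converse I would use Lerman's equivariant classification. An automorphism of $\mC$ whose adjoint $A$ preserves $\Lambda$ integrates to an automorphism of the torus $\hat{T}=\hat{\kt}/\Lambda$, and by the correspondence between good cones and compact toric contact manifolds of Reeb type, unique up to equivariant contactomorphism~\cite{L:contactToric}, this automorphism is realised by an equivariant contactomorphism $\psi$ of $(N,\bfD,\hat{T})$ with $\hat{\mu}\circ\psi^*=A^*\circ\hat{\mu}$. Then for any compatible $\hat{T}$--invariant toric Sasaki metric $g$, the pullback $\psi^*g$ is again Sasaki, the Sasaki condition being preserved by contactomorphisms, and again $\hat{T}$--invariant since $\psi$ is $\hat{T}$--equivariant and $A$ permutes $\hat{T}$; its Reeb vector lies in $\mC^*_+$ because $A^*$ preserves $\mC$. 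Hence $\psi^*g$ is a compatible toric Sasaki metric, as required.
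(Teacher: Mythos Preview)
Your argument is correct and is essentially the same as the paper's. Both proofs pivot on the same rigidity fact from~\cite{BG:book}: outside of spheres, $3$--manifolds and $3$--Sasaki structures, a Sasaki metric determines its compatible contact structure uniquely. You phrase this as uniqueness of the Reeb field (hence $\varphi_*X_b=X_a$ directly), whereas the paper phrases it as $\mathrm{Isom}(N,g_b)\subset\mathrm{Con}(N,\bfD)$; these are equivalent formulations of the same statement. After that, both proofs use conjugacy of maximal tori in the compact group $\mathrm{Isom}(N,g_b)$ to produce $\psi$, read off $A$ from the induced automorphism of $\hat{T}$, and invoke the uniqueness of the homogeneous moment map to obtain $\hat{\mu}\circ(\varphi\circ\psi)^*=A^*\circ\hat{\mu}$. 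The converse via Lerman's classification is also identical.

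The only place your write-up is slightly thinner than the paper's is the justification that $\hat{T}$ is \emph{maximal} in $\mathrm{Isom}(N,g_b)$: the paper observes that any torus of isometries acts through contactomorphisms (by the rigidity just established), hence Hamiltonianly on the symplectization, so its orbits are isotropic and its rank is at most $n+1$. You assert maximality as a known fact, which is fine, but you may want to include this one-line reason.
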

\begin{proof}
Since $3$--Sasaki manifolds, spheres and $3$--manifolds are the only manifolds carrying Riemanian metrics compatible with more than one contact structure, see~\cite{BG:book}, under our assumptions, $\mbox{Isom}(N,g_b)\subset \mbox{Con}(N,\bfD)$. Thus $\hat{T}$ and $\varphi^{-1}\circ\hat{T}\circ\varphi$ are tori in $\mbox{Con}(N,\bfD)$. Since a Hamiltonian action on the symplectization $(\bfD^o_+,\hat{\omega})$ induces isotropic distributions,
%Moreover, generic orbits of $\hat{T}$ and $\varphi^{-1}\circ\hat{T}\circ\varphi$ in the symplectization $(\bfD^o_+,\hat{\omega})$ are Lagrangians. Thus
$\hat{T}$ and $\varphi^{-1}\circ\hat{T}\circ\varphi$ are maximal tori in $\mbox{Isom}(N,g_b)$ which is a compact Lie group since $N$ is compact. In particular, they are conjugate, that is, there exists $\psi\in \mbox{Isom}(N,g_b)$ such that $$\hat{T}= (\varphi\circ\psi)^{-1}\circ\hat{T}\circ\varphi\circ\psi.$$
The differential $A$ at $1\in \hat{T}$ of the automorphism $\tau\mapsto  (\varphi\circ\psi)^{-1}\circ\tau\circ\varphi\circ\psi$ is linear and preserves the lattice. Moreover, $\hat{\mu}\circ (\varphi\circ\psi)^*$ and $A^*\circ \hat{\mu}$ are moment maps for the same Hamiltonian action of $\hat{T}$ on $(\bfD^o_+,\hat{\omega})$ and they are both homogeneous of degree $2$ with respect to the Liouville vector field. Hence, $\hat{\mu}\circ (\varphi\circ\psi)^*=A^*\circ \hat{\mu}.$

Note that $(\varphi\circ\psi)_*X_b =X_a$ since $(\varphi\circ\psi)^*g_a=g_b$ and $\varphi\circ\psi \in \mbox{Con}(N,\bfD)$. The converse follows from the Delzant--Lerman construction.\end{proof}
%
%\begin{definition} Two compatible toric Sasaki metrics $g$ and $g'$ on a co-oriented contact toric manifold are {\it equivalent} if there exists a contactomorphism mapping the ray of transversal homothety of $g$ to the ray of transversal homothety of $g'$.
%\end{definition}
In the proof of Proposition~\ref{lemUNIQ}, the hypothesis that $(N,\bfD,g_b)$ is neither $3$--Sasaki, a sphere nor a $3$--manifold is used to deduce that $\varphi^{-1}\circ\hat{T}\circ\varphi$ is included in $\mbox{Con}(N,\bfD)$. Thus, we can remove this hypothesis by assuming that $\varphi$ is a $\hat{T}$--equivariant contactomorphism. Combined with Lemma~\ref{lemUNIQtransv}, we get:
\begin{proposition}\label{unicity}
Let $(N,\bfD, \hat{T})$ be a compact toric contact manifold of Reeb type of dimension at least $5$. Two toric cscS metrics $g_a$ and $g_b$, with respective Reeb vector fields $X_a$ and $X_b$, coincide up to a combination of $\hat{T}$--equivariant contactomorphism and transversal homothety, if and only if there exists $\lambda>0$ such that $(\lambda\Delta_b,u_b)$ and $(\Delta_a,u_a)$ are equivalent in the sense of Definition~\ref{defnPolytEquiv}.
\end{proposition}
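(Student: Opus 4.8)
The plan is to reduce the statement to the dictionary already assembled in this section, combining three ingredients: (i) that a transversal homothety acts on the characteristic data merely by a dilation; (ii) the correspondence, supplied by Proposition~\ref{lemUNIQ} together with its converse, between $\hat{T}$--equivariant contactomorphisms intertwining two toric metrics and lattice--preserving linear automorphisms of the moment cone $\mC$; and (iii) the uniqueness of the cscS metric with a prescribed Reeb vector field, Lemma~\ref{lemUNIQtransv}. I would begin by recording the effect of a transversal homothety on the characteristic labeled polytope. Rescaling $b\mapsto \lambda^{-1}b$ gives
\[
\Delta_{\lambda^{-1}b}=\mC\cap\{\langle \lambda^{-1}b,y\rangle=\tfrac12\}=\mC\cap\{\langle b,y\rangle=\tfrac{\lambda}{2}\}=\lambda\,\Delta_b,
\]
while the labels are unchanged, $u_{\lambda^{-1}b}=u_b$, since $\hat{\kt}/\bR(\lambda^{-1}b)=\hat{\kt}/\bR b$. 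By the scalar--curvature formula $s_{g'}=\lambda^{-1}(s_g+2n)-2n$ recalled in the introduction, a transversal homothety carries the cscS metric $g_b$ to a cscS metric $g_b'$ with Reeb field $X_{\lambda^{-1}b}$ and characteristic labeled polytope $(\lambda\Delta_b,u_b)$. It therefore suffices to decide when $g_a$ and such a $g_b'$ agree up to a $\hat{T}$--equivariant contactomorphism.

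For the forward implication, assume $g_a$ and $g_b$ coincide up to a combination of transversal homothety and $\hat{T}$--equivariant contactomorphism. By the reduction above there are $\lambda>0$ and a $\hat{T}$--equivariant contactomorphism $\varphi$ with $\varphi^*g_a=g_b'$. I would apply Proposition~\ref{lemUNIQ} in the form of the remark following its proof, so that the sphere and $3$--Sasaki hypotheses are replaced by the $\hat{T}$--equivariance of $\varphi$; this yields $A\in\mathrm{Gl}(\hat{\kt})$ preserving $\Lambda$, with $A^*$ an automorphism of $\mC$ and $A(\lambda^{-1}b)=a$. Since $\langle a,y\rangle=\langle A(\lambda^{-1}b),y\rangle=\langle \lambda^{-1}b,A^*y\rangle$, the automorphism $A^*$ sends the slice $\Delta_a=\mC\cap\kt^*_a$ to $\mC\cap\kt^*_{\lambda^{-1}b}=\lambda\Delta_b$; being a lattice--preserving cone automorphism it permutes the primitive normals $\hat{u}_i$, and after passage to the quotients $\hat{\kt}/\bR a$ and $\hat{\kt}/\bR b$ this exhibits $A^*$ as an equivalence $(\lambda\Delta_b,u_b)\simeq(\Delta_a,u_a)$ in the sense of Definition~\ref{defnPolytEquiv}.

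For the converse, suppose $(\lambda\Delta_b,u_b)\simeq(\Delta_a,u_a)$ for some $\lambda>0$. The equivalence is an affine isomorphism of the two slice hyperplanes; coning it off from the origin produces a linear automorphism $A^*$ of $\hat{\kt}^*$ preserving $\mC$, whose adjoint $A$ satisfies $A a=\lambda^{-1}b$ (matching $\langle a,\cdot\rangle$ and $\langle \lambda^{-1}b,\cdot\rangle$ on a slice) and permutes the $\hat{u}_i$, hence preserves $\Lambda$. By the converse part of Proposition~\ref{lemUNIQ}, $A^*$ integrates to a $\hat{T}$--equivariant contactomorphism $\psi$ carrying toric Sasaki metrics to toric Sasaki metrics; because $\psi$ is a contactomorphism and the cscS condition is the constancy of $S(\phi)$, the metric $\psi^*g_b'$ is again a compatible toric cscS metric, now with Reeb field $X_a$ and characteristic polytope $(\Delta_a,u_a)$. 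Lemma~\ref{lemUNIQtransv} then forces $\psi^*g_b'$ and $g_a$ to coincide up to a $\hat{T}$--equivariant contactomorphism, so $g_a$ and $g_b$ agree up to transversal homothety and $\hat{T}$--equivariant contactomorphism.

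The crux, used in both directions, is the identification between lattice--preserving automorphisms of $(\mC,\Lambda)$ sending one Reeb ray to the other and equivalences of the associated characteristic labeled polytopes. The forward direction is routine once Proposition~\ref{lemUNIQ} is available, since it only requires restricting a cone automorphism to a slice. The delicate step is the converse: one must lift the affine equivalence of slices to a bona fide automorphism of the cone and verify that it preserves the lattice $\Lambda$, not merely the ambient rational structure. Here the \emph{exact} (unscaled) matching of labels imposed by Definition~\ref{defnPolytEquiv}, combined with the good--cone condition of Definition~\ref{defnGOODcone}, is what pins the coned map down to $A\hat{u}_{\pi(i)}=\hat{u}_i$ on the primitive normals and hence forces $A\Lambda=\Lambda$; this is the point I would treat most carefully, interpreting the equivalence of labeled polytopes as an isomorphism of the underlying toric data via Remark~\ref{DLTcorresp}.
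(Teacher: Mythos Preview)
Your proposal is correct and follows exactly the approach the paper indicates: the paper does not give a separate proof of Proposition~\ref{unicity} but states that it follows from Proposition~\ref{lemUNIQ} (with the $3$--Sasaki/sphere hypothesis replaced by $\hat{T}$--equivariance of $\varphi$) combined with Lemma~\ref{lemUNIQtransv}, and you have spelled out precisely this combination.

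One remark on the point you yourself flag as delicate. Your conclusion that permuting the primitive normals forces $A\Lambda=\Lambda$ is correct, but the justification via Remark~\ref{DLTcorresp} is not the right tool, since $(\Delta_b,u_b)$ need not be rational when $b$ is irrational. The cleaner argument is already contained in Definition~\ref{defnGOODcone}: applying the good--cone condition to the apex $\{0\}=\cap_i\hat{F}_i$ (a face of the strictly convex cone) with $I=\{1,\dots,d\}$ gives $\mathrm{span}_{\bZ}\{\hat{u}_1,\dots,\hat{u}_d\}=\Lambda\cap\hat{\kt}=\Lambda$, so the primitive normals generate $\Lambda$ over $\bZ$ and any linear map permuting them automatically preserves $\Lambda$. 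With this observation your converse argument goes through without further work.
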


\subsection{Transversal Futaki invariant and extremal affine function}\label{sectFUTAKI=ZETA}

Let $(\Delta,u)$ be a labeled polytope, $\Delta\subset \kt^*$ and $u\subset \kt$. Choosing a basis $(e_1,\dots, e_n)$ of $\mathfrak{t}$ gives a basis $\mu_0= 1$, $\mu_1=\langle e_1,\cdot\,\rangle$, $\dots$, $\mu_n=\langle e_n,\cdot\,\rangle $ of affine-linear functions.
\begin{definition} Let the vector $\zeta = (\zeta_0,\dots,\zeta_n)\in \bR^{n+1}$ be the unique solution of the linear system
\begin{equation}\label{systCHPext}
\begin{split}
\sum_{j=0}^n W_{ij}(\Delta)\, \zeta_j &= Z_i(\Delta,u),\;\;\; i= 0,\dots, n\\
\mbox{with }\quad W_{ij}(\Delta) = \int_{\Delta} \mu_i\mu_j\vol &\quad \mbox{ and }  \quad Z_i(\Delta,u) =2\!\int_{\del \Delta} \mu_i \length,
 \end{split}
\end{equation}
where the volume form $\vol=d\mu_1\wedge \dots \wedge d\mu_n$ and the measure $\length$ on $\del \Delta$ are related by the equality $u_j\wedge \length=-\vol$ on the facet $F_j$.
We call $\zeta_{(\Delta,u)}=\sum_{i=0}^n \zeta_i \mu_i$ the {\it extremal affine function}.
\end{definition}

\begin{rem}
One can also define $\zeta_{(\Delta,u)}$ as the unique affine function such that $\mL_{\Delta,u} (f) =\int_{\del\Delta}f\length - \frac{1}{2}\int_{\Delta}f\zeta_{(\Delta,u)}\vol=0$ for any smooth function $f$, see~\cite{don:scalar}.
\end{rem}
The extremal affine function $\zeta_{(\Delta,u)}$ is the $L^2(\Delta, \vol)$--projection of ``the scalar curvature'' $S(\phi)$ to the space of affine-linear functions, for any symplectic potential $\phi\in\mS(\Delta,u)$. Indeed, integrating~(\ref{abreuForm}) and using~(\ref{condCOMPACTIFonH}) we get
\begin{align}\label{remZETAscal2}
Z_i(\Delta,u)=\int_{\Delta} S(\phi) \mu_i \vol =2\!\int_{\del \Delta} \mu_i\length.
\end{align}
In view of this, if there exists a symplectic potential $\phi\in\mS(\Delta,u)$ such that $S(\phi)$ is an affine-linear function then $S(\phi)=\zeta_{(\Delta,u)}$. In that case, the corresponding toric Sasaki metric is {\it extremal} in the sense of~\cite{BGS}. Thus we get:
\begin{corollary}
On a compact toric contact manifold with good moment cone $(\mC,\Lambda)$, an extremal compatible toric Sasaki metric with Reeb vector field $X_b$ has constant scalar curvature if and only $\zeta_{(\Delta_b,u_b)}$ is constant where $(\Delta_b,u_b)$ is the characteristic labeled polytope of $(\mC,\Lambda)$ at $b$.
\end{corollary}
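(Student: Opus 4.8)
The plan is to read the corollary as a direct consequence of the identities assembled immediately above, the only genuine content being the translation between constancy on the polytope $\Delta_b$ and constancy of the scalar curvature on $N$.

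First I would unwind the definition of \emph{extremal}: by hypothesis the metric $g$ with Reeb vector field $X_b$ arises from a symplectic potential $\phi\in\mS(\Delta_b,u_b)$ for which $S(\phi)$ is affine-linear. The key observation, already recorded just before the statement, is that $\zeta_{(\Delta_b,u_b)}$ is \emph{by definition} the $L^2(\Delta_b,\vol)$--projection of $S(\phi)$ onto the space of affine-linear functions; hence an affine-linear $S(\phi)$ is its own projection, giving $S(\phi)=\zeta_{(\Delta_b,u_b)}$ as functions on $\Delta_b$.

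Next I would combine this with the curvature formula $s_g=4S(\phi)-2n$ from \S\ref{cscSuniq}. Substituting gives that $s_g$ is the pull-back, via the characteristic slice $\Delta_b=\mC\cap\kt_b^*$, of the affine-linear function $4\zeta_{(\Delta_b,u_b)}-2n$. Because this pull-back is taken over the full-dimensional open set $\mathring{\Delta}_b$, the function $s_g$ is constant on $N$ if and only if $4\zeta_{(\Delta_b,u_b)}-2n$ is a constant function on $\Delta_b$, which in turn holds if and only if the affine-linear function $\zeta_{(\Delta_b,u_b)}$ is constant. Reading this chain in both directions yields the claimed equivalence, and shows moreover that in this case $S(\phi)$, and hence $s_g$, equals the constant value of $\zeta_{(\Delta_b,u_b)}$.

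There is no real obstacle: the substantive analytic inputs---the Abreu formula, the boundary conditions of Definition~\ref{defSympPot}, and Guan's uniqueness via Lemma~\ref{lemUNIQtransv}---have all been expended earlier to make sense of $\zeta_{(\Delta_b,u_b)}$ and to establish $S(\phi)=\zeta_{(\Delta_b,u_b)}$ for extremal potentials. The single point requiring a line of care is the elementary remark that an affine-linear function on the $n$--dimensional polytope $\Delta_b$ is constant precisely when its pull-back to $N$ is, which is immediate since the image of the relevant projection is the open set $\mathring{\Delta}_b$.
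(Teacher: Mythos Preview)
Your argument is correct and is exactly the reasoning the paper intends: the corollary is stated with the preamble ``Thus we get:'' and no separate proof, relying precisely on the identification $S(\phi)=\zeta_{(\Delta_b,u_b)}$ for extremal potentials together with $s_g=4S(\phi)-2n$. Two small quibbles: Guan's uniqueness (Lemma~\ref{lemUNIQtransv}) plays no role here and should not be listed among the inputs; and in your final clause, $s_g$ equals $4\zeta_{(\Delta_b,u_b)}-2n$, not the constant value of $\zeta_{(\Delta_b,u_b)}$ itself.
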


For any $\phi\in \mS(\Delta,u)$, we define the linear functional $\mF_{(\Delta,u)}\co\kt\ra \bR$ as
$$\mF_{(\Delta,u)}(a)=\int_{\Delta} f_a S(\phi)\vol $$
where $f_a(\mu) = \langle a,\mu\rangle \int_{\Delta} \vol -\int_{\Delta} \langle a,\mu\rangle\vol$ has mean value $0$. Via~(\ref{remZETAscal2}), $\mF_{(\Delta,u)}$ does not depend on the choice of $\phi$. Indeed, setting $\bar{Z}=\frac{1}{\int_{\Delta} \vol}Z_0(\Delta,u)$, $$\mF_{(\Delta,u)}(a)=\int_{\Delta} f_a \zeta_{(\Delta,u)}\vol = \int_{\Delta} f_a (\zeta_{(\Delta,u)} - \bar{Z})\vol.$$ In particular, $\mF_{(\Delta,u)}= 0$ if and only if $\zeta_{(\Delta,u)}$ is constant. \\

Given any compatible $\hat{T}$--invariant CR-structure on a toric contact manifold with Reeb vector field $X_b$, $\mF_b=\mF_{(\Delta_b,u_b)}$ is the restriction of the transversal Futaki invariant of~\cite{BGS} to the space of real transversally holomorphic vector fields which are induced by the toric action. However, our definition of $\mF_{(\Delta_b,u_b)}$ is independent of the choice of a compatible CR-structure, relating this invariant to the symplectic version of the Futaki invariant introduced in~\cite{lejmi}.

\section{The Reeb family of a labeled polytope}\label{sectionREEBfamilyCONE}

A {\it labeled cone} $(\mC,L)$ consists of a polyhedral cone, $\mC$, with $d$ facets in some vector space $V$ and $L=\{L_1,\dots, L_d\}\subset V^*$ so that $\mC=\{ y\,|\, \langle y, L_i\rangle \geq 0\;\; i=1,\dots, d \}$. Moreover, a labeled polytope $(\Delta,u)$ is {\it characteristic} of $(\mC,L)$ at $b\in V^*$, if $\Delta =\mC\cap \{ y\,|\, \langle y, b \rangle=1\}$ and $u_i = [L_i]\in V^*/\bR b$. The interesting case is when $L_1,\dots, L_d$ span a lattice $\Lambda\subset V^*$ for which $\mC$ is good, since in that case $(\Delta,u)$ determines the transversal geometry associated to the Reeb vector $X_b$ on the toric contact manifold associated to $(\mC,\Lambda)$.
\begin{convention}
From now on, we set $\Delta =\mC\cap \{ y\,|\, \langle y, b \rangle=1\}$ instead of $\Delta =\mC\cap \{ y\,|\, \langle y, b \rangle=1/2\}$. This convention facilitates the calculations of the next sections and geometrically corresponds to normalizing the Liouville vector field so the symplectic form of the symplectic cone is homogeneous of degree $1$ instead of $2$.
\end{convention}
\begin{definition} A {\it Reeb family} is a set of equivalence classes of labeled polytopes characteristic of a given labeled cone $(\mC,L)$.\end{definition}

\subsection{The cone associated to a polytope}

Let $(\Delta, u)$ be a labeled polytope with {\it defining functions}:
$$L_i = \langle \,\cdot\, ,u_i\rangle +\lambda_i \in \mbox{Aff}(\kt^*,\bR).$$ That is $\Delta= \{x \in \kt^*\,|\, L_i(x)\geq0,\, i=1,\dots, d\},$ $F_i= \{x\in \kt^*\,|\,L_i(x)=0\}$ and $u_i =dL_i$ via the identification $T^*_\mu\kt^* \simeq \kt$.\\

The defining functions $L_1,\dots, L_d$ determine a cone
$$\mC(\Delta) = \{y\in \mbox{Aff}(\kt^*,\bR)^*\; |\; \langle y, L_i\rangle \geq 0\; i=1,\dots, d\}$$
and its dual $\mC^*(\Delta)=\{ L \in \mbox{Aff}(\kt^*,\bR) \;|\; \langle y, L\rangle \geq 0 \;\; \forall y\in \mC(\Delta) \,\}$. By translating the polytope $\Delta + \mu$ we translate the defining functions $L_i^{\mu}=L_i-\langle \mu,u_i\rangle$, producing a linear equivalence between the cones $\mC(\Delta)$ and $\mC(\Delta+\mu)$. More generally, we have:
\begin{lemma}\label{translationCone} If $(\Delta,u)$ and $(\Delta',u')$ are equivalent by an invertible affine map $A$ then so are  $(\mC(\Delta'),L')$ on $(\mC(\Delta),L)$ via the adjoint map of the pull-back of $A$.
\end{lemma}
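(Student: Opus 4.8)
The plan is to realize the claimed equivalence as the adjoint of the pull-back map on affine functions, reducing everything to the transformation law for the defining functions $L_i$.

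First I would unwind the affine equivalence $A\co \kt^*\ra \mathfrak{s}^*$. By Definition~\ref{defnPolytEquiv} we have $A(\Delta)=\Delta'$ and the normals correspond via $(dA)^*\co \mathfrak{s}\ra \kt$, that is $(dA)^*(u_i')=u_i$. The pull-back $A^*(L')=L'\circ A$ then sends each $L_i'$ to an affine function whose linear part is $(dA)^*(u_i')=u_i$; and since $A(F_i)=F_i'$ while $L_i'$ vanishes on $F_i'$, the function $A^*(L_i')=L_i'\circ A$ vanishes on $F_i$. As $L_i$ is the unique affine function with linear part $u_i$ vanishing on the facet $F_i$, this forces the clean transformation law $L_i = A^*(L_i')$, where $A^*\co \mathrm{Aff}(\mathfrak{s}^*,\bR)\ra \mathrm{Aff}(\kt^*,\bR)$. (Equivalently, writing $A(x)=dA(x)+c$ one checks directly that $\langle A(x),u_i'\rangle+\lambda_i' = \langle x,u_i\rangle + \langle c,u_i'\rangle+\lambda_i'$, so that the constants match as $\lambda_i=\langle c,u_i'\rangle+\lambda_i'$.)

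Next I would observe that $A^*$ is a linear isomorphism of the finite-dimensional affine-function spaces, with inverse $(A^{-1})^*$, and that by the transformation law it carries the label set $\{L_i'\}$ bijectively onto $\{L_i\}$. Dualizing, the adjoint $(A^*)^*\co \mathrm{Aff}(\kt^*,\bR)^*\ra \mathrm{Aff}(\mathfrak{s}^*,\bR)^*$ is again a linear isomorphism, and this is the map realizing the equivalence of the labeled cones. That it sends $\mC(\Delta)$ onto $\mC(\Delta')$ is read off from the defining inequalities: for any $y\in\mathrm{Aff}(\kt^*,\bR)^*$ one computes $\langle (A^*)^*y,\,L_i'\rangle = \langle y,\,A^*L_i'\rangle = \langle y,\,L_i\rangle$, whence $y\in\mC(\Delta)$ if and only if $(A^*)^*y\in\mC(\Delta')$. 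Finally, under an equivalence of labeled cones the labels must correspond via the adjoint of the transporting map; since the adjoint of $(A^*)^*$ is $A^*$ itself (double dual), the label correspondence is precisely the identity $A^*(L_i')=L_i$ already established.

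The statement is formal once the identifications are pinned down, so there is no analytic difficulty. The one point genuinely requiring care is the bookkeeping of the four spaces involved — $\mathrm{Aff}(\kt^*,\bR)$ with its dual, and the corresponding pair over $\mathfrak{s}^*$ — together with the direction of each arrow, so that the single map $(A^*)^*$ is seen to transport both the cone and its labeling correctly. I would also note that the translation case recorded just before the lemma, namely $\Delta\mapsto \Delta+\mu$ with $L_i\mapsto L_i^{\mu}=L_i-\langle\mu,u_i\rangle$, is simply the special case $A=\mathrm{id}+\mu$, which the general argument subsumes.
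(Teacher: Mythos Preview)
Your argument is correct. The paper states this lemma without proof, treating it as immediate from the definitions; the only explicit case it works out is the translation $\Delta\mapsto\Delta+\mu$ mentioned just before the statement. Your write-up supplies exactly the missing verification --- the identity $A^*(L_i')=L_i$ on affine functions, and the resulting identification of the cones under the adjoint $(A^*)^*$ --- and correctly notes that the translation case is the specialization $A=\mathrm{id}+\mu$.
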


\begin{proposition} \label{propTRANSlabeledPOLYTOPE}
A labeled polytope $(\Delta,u)$ is characteristic of a good cone if and only if the defining functions $L_1$,..., $L_d$ of $(\Delta,u)$ span a lattice in ${\rm Aff}(\kt^*,\bR)$ with respect to which the cone $\mC(\Delta)$ is good.\end{proposition}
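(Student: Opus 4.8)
The plan is to exhibit one canonical labeled cone of which $(\Delta,u)$ is always characteristic, namely $(\mC(\Delta),L)$ at a distinguished point, and then to argue that this candidate is \emph{forced}: any good cone realizing $(\Delta,u)$ as a characteristic polytope is linearly isomorphic to $\mC(\Delta)$ via a map carrying its lattice onto $\mbox{span}_{\bZ}\{L_1,\dots,L_d\}$. Since goodness is invariant under lattice--preserving linear isomorphisms (the cone analogue of Definition~\ref{defnPolytEquiv}, cf. Lemma~\ref{translationCone}), it will then transfer in both directions, giving the equivalence.

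First I would set up the canonical embedding. The space $\mathrm{Aff}(\kt^*,\bR)$ is $(n+1)$--dimensional and contains the constant function $1$; evaluation $x\mapsto \mathrm{ev}_x$ embeds $\kt^*$ onto the affine hyperplane $\{y\,|\,\langle y,1\rangle=1\}\subset\mathrm{Aff}(\kt^*,\bR)^*$. Under this embedding $\langle \mathrm{ev}_x,L_i\rangle=L_i(x)$, so $\mathrm{ev}_x\in\mC(\Delta)$ if and only if $x\in\Delta$; hence $\Delta=\mC(\Delta)\cap\{\langle\,\cdot\,,1\rangle=1\}$ and, modulo $\bR\cdot 1$, one has $[L_i]=u_i$. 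Thus $(\Delta,u)$ is characteristic of $(\mC(\Delta),L)$ at $b_0=1$, and compactness of $\Delta$ forces $\mC(\Delta)$ to be strictly convex with $1\in\mC(\Delta)^*_+$. The ``if'' direction is then immediate: should the $L_i$ span a lattice with respect to which $\mC(\Delta)$ is good, that lattice together with $\mC(\Delta)$ is a good cone of which $(\Delta,u)$ is characteristic.

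For the ``only if'' direction, suppose $(\Delta,u)$ is characteristic of some good cone $(\mC,\Lambda)$, $\mC\subset V$, at a point $b\in V^*$. By Lemma~\ref{translationCone} I may assume $\Delta=\mC\cap H$ with $H=\{\langle b,\cdot\rangle=1\}$, affinely identified with $\kt^*$. Restriction $V^*\to\mathrm{Aff}(H,\bR)\cong\mathrm{Aff}(\kt^*,\bR)$ is a linear isomorphism: a linear functional vanishing on the affine hyperplane $H$ vanishes on $\mbox{span}_{\bR}H=V$ (as $H$ avoids the origin), so restriction is injective, hence bijective by the equal dimensions $n+1$. This sends $b\mapsto 1$, and it sends each primitive inward normal $\hat u_i$ to an affine function whose linear part is $[\hat u_i]=u_i$ and which vanishes on $F_i$; since $L_i$ is the unique such affine function, $\hat u_i\mapsto L_i$. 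Dualizing yields a linear isomorphism $V\to\mathrm{Aff}(\kt^*,\bR)^*$ carrying $\mC$ onto $\mC(\Delta)$. To pin down the lattice I would use that $\mC$ is strictly convex and full--dimensional, so its normals span $V^*$ and the minimal face $\{0\}=\bigcap_{i=1}^d F_i$ is a genuine face; applying the goodness condition of Definition~\ref{defnGOODcone} to this face gives $\mbox{span}_{\bZ}\{\hat u_1,\dots,\hat u_d\}=\Lambda\cap V^*=\Lambda$. Hence the isomorphism carries $\Lambda$ exactly onto $\mbox{span}_{\bZ}\{L_1,\dots,L_d\}$, and goodness transfers to $\mC(\Delta)$ with respect to that lattice.

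The main obstacle is the lattice identification $\Lambda=\mbox{span}_{\bZ}\{\hat u_i\}$: this is what forces the realizing lattice to be \emph{precisely} the one generated by the defining functions, and it is what makes the statement sharp (the specific lattice $\mbox{span}_{\bZ}\{L_i\}$, not merely one commensurable with it). It hinges on reading the goodness condition at the apex and on strict convexity to know the normals span $V^*$. The secondary delicate point is the on--the--nose identification $\hat u_i\mapsto L_i$ rather than up to positive scale, which I would secure by matching both the linear parts and the common vanishing locus $F_i$.
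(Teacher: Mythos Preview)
Your approach is essentially the paper's own: the paper derives the proposition directly from Lemma~\ref{NaturalityCone}, which asserts exactly your two ingredients --- that $(\Delta,u)$ is characteristic of $(\mC(\Delta),\{L_1,\dots,L_d\})$ at $\bfone$, and that this labeled cone is unique up to linear equivalence. Your restriction isomorphism $V^*\to\mathrm{Aff}(H,\bR)$ and the identification $\hat u_i\mapsto L_i$ are precisely the ``straightforward'' uniqueness the paper leaves to the reader, so you are filling in what the paper omits rather than arguing differently.

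One small comment on the lattice step. Your identification $\Lambda=\mbox{span}_{\bZ}\{\hat u_i\}$ via the goodness condition at the apex is legitimate under Definition~\ref{defnGOODcone} as written, but it is stronger than what the proposition needs and hinges on reading the apex as a face to which the condition applies. A more robust route, independent of that reading, is to note that goodness of $(\mC,\Lambda)$ automatically implies goodness of $(\mC,\Lambda')$ for the full--rank sublattice $\Lambda'=\mbox{span}_{\bZ}\{\hat u_i\}$: the $\hat u_i$ remain primitive in $\Lambda'\subset\Lambda$, and for each face $F_I$ one has $\Lambda'\cap\mbox{span}_{\bR}\{\hat u_i:i\in I\}\subset\Lambda\cap\mbox{span}_{\bR}\{\hat u_i:i\in I\}=\mbox{span}_{\bZ}\{\hat u_i:i\in I\}$. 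Then your linear isomorphism carries $(\mC,\Lambda')$ onto $(\mC(\Delta),\mbox{span}_{\bZ}\{L_i\})$ on the nose, and goodness transfers. This is closer in spirit to how the paper silently passes from Lemma~\ref{NaturalityCone} to the proposition.
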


This proposition follows from the next lemma where $\bfone\in \mbox{Aff}(\kt^*,\bR)$ denotes the constant function equals to $1$.
%so that $i \oplus e : \kt^*\oplus \bR \ra \mbox{Aff}(\kt^*,\bR)^*$ takes $(\mu,r)$ to $i_\mu + re_0\in \mbox{Aff}(\kt^*,\bR)^*$. It is clear (and known) that $i \oplus e$ is a linear diffeomorphism.

\begin{lemma}\label{NaturalityCone}
Let $(\Delta,u)$ be a labeled polytope with defining functions $L_1$, ... $L_d$.  $\mC(\Delta)$ is a non-empty, strictly convex, polyhedral cone. Moreover, $(\Delta,u)$ is characteristic of $(\mC(\Delta), \{L_1,\dots, L_d\} )$ at $\bfone \in \mbox{Aff}(\kt^*,\bR)$, and up to linear equivalence, this is the unique labeled cone for which $(\Delta,u)$ is characteristic.
\end{lemma}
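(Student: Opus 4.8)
The plan is to realize $\mC(\Delta)$ concretely as the cone over $\Delta$ sitting inside $\mathrm{Aff}(\kt^*,\bR)^*$ via the evaluation map, from which everything but the uniqueness scalars will be formal. Define $\mathrm{ev}\co\kt^*\to\mathrm{Aff}(\kt^*,\bR)^*$ by $\langle\mathrm{ev}(x),L\rangle=L(x)$. Since affine functions separate points, $\mathrm{ev}$ is an injective affine map, and because $\langle\mathrm{ev}(x),\bfone\rangle=1$ for every $x$ it identifies $\kt^*$ with the affine hyperplane $\{y\mid\langle y,\bfone\rangle=1\}$; meanwhile the splitting $\mathrm{Aff}(\kt^*,\bR)=\kt\oplus\bR\bfone$ identifies $\mathrm{Aff}(\kt^*,\bR)/\bR\bfone$ with $\kt$ by $[L]\mapsto dL$, so that $[L_i]\mapsto u_i$. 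Under $\mathrm{ev}$ the inequality $\langle\mathrm{ev}(x),L_i\rangle\geq0$ reads $L_i(x)\geq0$, hence $\mathrm{ev}$ carries $\Delta$ bijectively onto $\mC(\Delta)\cap\{\langle\cdot,\bfone\rangle=1\}$, sending the facet $F_i$ to the facet cut out by $L_i$. This is exactly the statement that $(\Delta,u)$ is characteristic of $(\mC(\Delta),\{L_1,\dots,L_d\})$ at $\bfone$, and it presents $\mC(\Delta)$ as the set of rays from $0$ through the height-one slice $\mathrm{ev}(\Delta)$; in particular $\mC(\Delta)$ is a nonempty, full-dimensional polyhedral cone.

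Next I would prove strict convexity, i.e. that $\bfone\in\mC_+^*(\Delta)$. The key input is compactness of $\Delta$: its recession cone $\{v\in\kt^*\mid\langle v,u_i\rangle\geq0\;\forall i\}$ is trivial, which dually says the normals $u_1,\dots,u_d$ positively span $\kt$, so they admit a relation $\sum_i c_iu_i=0$ with all $c_i>0$. Then $\sum_i c_iL_i=\kappa\bfone$ with $\kappa=\sum_i c_i\lambda_i$, and evaluating at an interior point of $\Delta$ (where every $L_i>0$) shows $\kappa>0$; thus $\bfone=\kappa^{-1}\sum_i c_iL_i$ is a strictly positive combination of the $L_i$. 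For any $y\in\mC(\Delta)$ this yields $\langle y,\bfone\rangle=\kappa^{-1}\sum_i c_i\langle y,L_i\rangle\geq0$, and equality forces $\langle y,L_i\rangle=0$ for all $i$ since the $c_i$ are positive; as the linear parts $u_i$ span $\kt$ and $\bfone$ now lies in their span, the $L_i$ span all of $\mathrm{Aff}(\kt^*,\bR)$, so $y=0$. Hence $\langle y,\bfone\rangle>0$ on $\mC(\Delta)\setminus\{0\}$, which is strict convexity.

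For uniqueness, suppose $(\Delta,u)$ is also characteristic of a labeled cone $(\mC,L')$ in an ambient space $V$ at some $b$, so $\Delta=\mC\cap\{\langle\cdot,b\rangle=1\}$ and $u_i=[L_i']\in V^*/\bR b$. Running the compactness argument above with $b$ in place of $\bfone$ shows that $\mC$ too is the union of rays from $0$ through its height-one slice $\Delta$, with $b\in\mC_+^*$. The two realizations of $\Delta$ (via $\mathrm{ev}$ in $\{\langle\cdot,\bfone\rangle=1\}$ and via the given identification in $\{\langle\cdot,b\rangle=1\}$) are identified by an affine isomorphism of the slices, and since each cone is coned from $0$ over such a slice off the origin, this affine isomorphism extends uniquely to a linear isomorphism $\Phi$ of the ambient $(n+1)$-dimensional spaces with $\Phi(\mC(\Delta))=\mC$ and $\Phi^*b=\bfone$. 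Because $\Phi$ matches facets, $\Phi^*L_i'=t_iL_i$ with $t_i>0$; passing to the quotient by the marked covectors, where both sides read $u_i$ by the characteristic condition, forces $t_i=1$, so $\Phi$ is the required linear equivalence of labeled cones in the sense of Lemma~\ref{translationCone}.

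The main obstacle is precisely this last step: keeping track of the full defining functions $L_i$ rather than merely the rays they span. Everything up to the positive scalars $t_i$ is formal, but pinning down $t_i=1$ genuinely uses the characteristic normalization $u_i=[L_i]$ on \emph{both} cones together with the compatibility of $\Phi^*$ with the identifications $V^*/\bR b\cong\kt\cong\mathrm{Aff}(\kt^*,\bR)/\bR\bfone$; one must check that the map induced by $\Phi^*$ on these quotients sends $[L_i']$ to $[L_i]$ before concluding that the $t_i$ are trivial.
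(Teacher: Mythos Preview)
Your proof is correct and follows the same route as the paper: the evaluation map identifies $\kt^*$ with the slice $\{\langle\cdot,\bfone\rangle=1\}$, the cone is realized as rays through $\mathrm{ev}(\Delta)$, and $[L_i]=dL_i=u_i$ gives the characteristic condition. Where the paper simply records the decomposition $\mC(\Delta)\setminus\{0\}\simeq\Delta\times\bR_{>0}$ and declares uniqueness ``straightforward'', you supply the positive-spanning argument for strict convexity and spell out the linear equivalence for uniqueness --- these are exactly the details the paper elides, and your treatment of the scalars $t_i$ is the right one.
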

 %a labeled cone $(\mC,L')$ with respect to a vector $b$ then there is a linear equivalence sending $\mC$ to $\mC(\Delta,u)$ whose adjoint maps $\bfone$ to $b$ and $L_i$ to $L'_i$, up to renumbering.
\begin{proof}
Consider the evaluation map $e \co \kt^* \hookrightarrow \mbox{Aff}(\kt^*,\bR)^*$, that is for $\mu\in\kt^*$ the map $e_\mu \co L \ra L(\mu)$ is linear. We have $\im e=\{y\,|\, \langle y, \bfone \rangle=1\}$ and then
$e(\Delta) = \im e \cap \mC(\Delta,u).$ For any $y\in \mbox{Aff}(\kt^*,\bR)^*$ there exist a unique $\mu\in \kt^*$ and a unique $r=\langle y, \bfone \rangle\in \bR$ such that $y= e_\mu -e_0 +re_0$ where $e$ is the evaluation map as above. This gives an identification $\mbox{Aff}(\kt^*,\bR)^*\simeq \kt^*\times \bR$ leading to
\begin{equation}
  \begin{split}\label{decompCanonicalCone}
  \mC(\Delta)\backslash\{0\}=\{e_{r\mu} -e_0 +re_0\;|\; \mu \in \Delta,\; r>0\}\simeq \Delta\times \bR_{> 0}.
  \end{split}
\end{equation}
By using again the identification $T_{\mu}^*\kt^* \simeq \kt$, the differential maps $\mbox{Aff}(\kt^*,\bR)$ to $\kt$. The differential corresponds also to the quotient map of by the linear subspace of constant function. Thus $dL_i =[L_i] =u_i \in \mbox{Aff}(\kt^*,\bR)/\bR\bfone.$ The uniqueness part of Lemma~\ref{NaturalityCone} is straightforward.
\end{proof}

\begin{rem}
For a given labeled (simple) polytope $(\Delta,u)$ with vertices $\nu_1$, ... $\nu_N$ and facets $F_1$, ... $F_d$, one can define $N$ lattices of % $\kt$ as $\Lambda_i = \langle u_l \,|\, \nu_i \in F_l \rangle_{\bZ}.$ With this notation a labeled polytope $(\Delta,u)$ is rational if and only if the lattices $\Lambda_1$, $\Lambda_2$, $\dots$, $\Lambda_N$ $\subset \kt$ are all contained in a single lattice $\Lambda\subset \kt$ while $(\Delta,u)$ satisfies the Delzant condition if and only if $\Lambda_1=\Lambda_2 =\dots =\Lambda_N.$ Analogously, we define $N$ subgroups of
$\mbox{Aff}(\kt^*,\bR)$ as $\Lambda_i = \langle L_l \,|\, \nu_i \in F_l \rangle_{\bZ}$. They are free groups of rank $n$ and one can prove that, assuming that $\Lambda= \langle L_l,\dots, L_d \rangle_{\bZ}$ is a lattice in $\rm{Aff}(\kt^*,\bR)$, the cone $(\mC(\Delta),\Lambda)$ is good if and only if all the groups $\Lambda/ \Lambda_1, \dots, \Lambda/ \Lambda_N$ are free.\end{rem}

\subsection{The Reeb family of a labeled polytope} Let $(\Delta,u)$ be a labeled polytope with defining functions $L_1$, ..., $L_d$. Denote the interior of $\mC^*(\Delta)$ by $\mC^*_+(\Delta)$. It is the set of affine maps $b\in\rm{Aff}(\mathfrak{t}^*,\bR)$ which are strictly positive on $\mC(\Delta)$. $\mC^*_+(\Delta)$ is non-empty and open ($\mC(\Delta)$ is a closed subset of $\rm{Aff}(\mathfrak{t}^*,\bR)^*$) and, by using~(\ref{decompCanonicalCone}), is
$$\mC^*_+(\Delta)=\{b\in {\rm Aff}(\mathfrak{t}^*,\bR)\,|\;  \langle b, \mu\rangle >0\; \forall \mu\in \Delta\}.$$

 Using Lemma~\ref{NaturalityCone} we get that: {\it If $(\Delta,u)$ and $(\Delta', u')$ are in the same Reeb family then there exists a unique vector $b\in \mC^*_+(\Delta)$ such that $(\Delta', u')$ is characteristic of $(\mC(\Delta), \{L_1,\dots, L_d\})$ at $b$.} In particular, the cone $\mC^*_+(\Delta)$ provides an effective parametrization of the $\Reeb$ family of $(\Delta,u)$. The affine hyperplane $$\kt^*_b = \{y \in {\rm Aff}(\kt^*,\bR)^* \;|\; \langle y, b\rangle = 1\}$$ is identified with the annihilator of $b$ in $\mbox{Aff}(\kt^*,\bR)^*$ via a translation and then its dual vector space is the quotient $\kt_{b} = \mbox{Aff}(\kt^*,\bR)/\bR b$.

\begin{proposition} \label{explicitReebFamily}
The $\Reeb$ family of $(\Delta,u)$ is parameterized by
$$\left\{(\Delta_b,u_b)=\left.\left(\Psi_b(\Delta), [L_1]_b, \dots, [L_d]_b\right)\; \right| \; b\in \mC^*_+(\Delta) \right\}$$ where $[L_l]_b$ is the equivalence class of $L_l$ in $\kt_b$ and the map $\Psi_b\co\Delta \ra \kt^*_b$ is defined as $\Psi_b(\mu)=\frac{e_{\mu}}{b(\mu)}$.
\end{proposition}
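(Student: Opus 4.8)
The plan is to reduce the statement to a single explicit computation: the description of the affine slice $\mC(\Delta)\cap\{y\mid\langle y,b\rangle=1\}$. The parametrization itself is already in hand from the discussion preceding the statement, which rests on Lemma~\ref{NaturalityCone}: the Reeb family of $(\Delta,u)$ is precisely the set of equivalence classes of labeled polytopes characteristic of $(\mC(\Delta),\{L_1,\dots,L_d\})$, and these are effectively parametrized by the vectors $b\in\mC^*_+(\Delta)$. Hence it remains only to identify, for a fixed $b\in\mC^*_+(\Delta)$, the labeled polytope characteristic of $(\mC(\Delta),\{L_i\})$ at $b$ with $(\Psi_b(\Delta),[L_1]_b,\dots,[L_d]_b)$. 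For the labels there is nothing to prove: by the very definition of characteristic, the $i$-th normal is the class $[L_i]\in\mbox{Aff}(\kt^*,\bR)/\bR b=\kt_b$, which is exactly what we write $[L_i]_b$.

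The content is therefore the equality of polytopes $\mC(\Delta)\cap\{y\mid\langle y,b\rangle=1\}=\Psi_b(\Delta)$, which I would read off directly from the cone decomposition~(\ref{decompCanonicalCone}). Recall that $e_{r\mu}-e_0+re_0=r\,e_\mu$, so every nonzero point of $\mC(\Delta)$ is uniquely of the form $r\,e_\mu$ with $\mu\in\Delta$ and $r>0$. Since $\langle e_\mu,b\rangle=b(\mu)$, the slicing condition $\langle r\,e_\mu,b\rangle=1$ reads $r\,b(\mu)=1$; because $b\in\mC^*_+(\Delta)$ guarantees $b(\mu)>0$ for all $\mu\in\Delta$, this has the unique solution $r=1/b(\mu)$. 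Thus the slice is exactly
$$\mC(\Delta)\cap\{y\mid\langle y,b\rangle=1\}=\left\{\frac{e_\mu}{b(\mu)}\;\middle|\;\mu\in\Delta\right\}=\Psi_b(\Delta).$$
The same computation exhibits $\Psi_b$ as the central projection of $e(\Delta)$ from the origin onto $\kt^*_b$, hence a bijection of $\Delta$ onto $\Delta_b$; injectivity follows by pairing with $\bfone$, since $\langle\Psi_b(\mu),\bfone\rangle=1/b(\mu)$ recovers $b(\mu)$ and then $e_\mu$.

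Finally I would confirm that $[L_i]_b$ is genuinely the inward normal to the corresponding facet of $\Delta_b$: the facets of $\Delta_b$ are the slices of the facets $\{y\in\mC(\Delta)\mid\langle y,L_i\rangle=0\}$, so $\Delta_b$ is cut out inside $\kt^*_b$ by the functions $y\mapsto\langle y,L_i\rangle$, whose differentials in $\kt_b=\mbox{Aff}(\kt^*,\bR)/\bR b$ are the classes $[L_i]_b$, and these point inward since $\langle y,L_i\rangle\geq0$ throughout $\Delta_b$. No step here is a serious obstacle; the only real care needed is bookkeeping with the three identifications in play — the embedding $e\co\kt^*\hookrightarrow\mbox{Aff}(\kt^*,\bR)^*$, the splitting $\mbox{Aff}(\kt^*,\bR)^*\simeq\kt^*\times\bR$, and the quotient $\kt_b=\mbox{Aff}(\kt^*,\bR)/\bR b$ — so that the slice computation and the facet identification are each performed in the correct space.
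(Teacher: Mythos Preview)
Your argument is correct and follows the same core route as the paper: both use the decomposition~(\ref{decompCanonicalCone}) to identify the slice $\mC(\Delta)\cap\kt^*_b$ with $\Psi_b(\Delta)$, and both check injectivity of $\Psi_b$. The one substantive addition in the paper's proof is an explicit coordinate computation of the differential $d_\mu\Psi_b$ (formula~(\ref{d_bPsi})), used there to show $\Psi_b$ is an immersion; this formula is not strictly needed for the proposition as stated, but it is invoked later in the proof of Lemma~\ref{calculEAFb} to pull back the volume form $\vol_b$, so you would eventually have to supply it anyway.
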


\begin{proof} Using the decomposition~(\ref{decompCanonicalCone}), we see that \begin{equation}\label{Delta_b}
\kt^*_{b}\cap \mC(\Delta) =\left\{ \left.e_{\frac{\mu}{b(\mu)}}-e_0 +\frac{e_0}{b(\mu)} = \frac{e_{\mu}}{b(\mu)} \,\right|\, \mu\in \Delta\right\}.\end{equation}
The map $\Psi_b(\mu)=\frac{e_{\mu}}{b(\mu)}$ is well-defined and injective on any set where $b$ is positive.\\

 A basis, $(v_1, \dots, v_n)$, of $\kt$ provides coordinates on $\kt^*$ via $\mu_i=\langle \mu, v_i\rangle$ and so we write $\mu=(\mu_1,\dots, \mu_n)$, as well as, a basis of $\mbox{Aff}(\kt^*,\bR)$, that is
\begin{equation}\label{coordSYS}
  (\bfone, \langle \cdot, v_1\rangle, \dots, \langle \cdot, v_1\rangle)
\end{equation}
which, in turn, gives coordinates on $\mbox{Aff}(\kt^*,\bR)^*$ as $y=(y_0, y_1,\dots,y_n)$ where $y_0 = \langle y, \bfone\rangle$ and $y_i = \langle y, v_i \rangle$.\\

 In this system of coordinates, we have $e_{\mu} = (1, \mu_1,\dots, \mu_n)$ and thus
$$\Psi_b(\mu) = \frac{1}{b(\mu)}(1, \mu_1,\dots, \mu_n).$$
Denoting $b=b(0)\bfone + \sum_{i=1}^n b_iv_i,$ the differential of $\Psi_b$ at $\mu$ is
\begin{equation}\label{d_bPsi}
d_{\mu}\Psi_b =\frac{1}{b(\mu)^2}\left( -\frac{\del}{\del y_0}\otimes db +\sum_{i,j =1}^n \left( b(\mu)  \frac{\del}{\del y_i}\otimes d\mu_i - b_j\mu_i  \frac{\del}{\del y_i}\otimes d\mu_j\right)  \right)
 \end{equation} where $db$ is identified with an element of $\kt$ and thus with a linear map in $\mbox{Aff}(\kt^*,\bR)$. For a given vector $X=\sum_{i=1}^n X_i\frac{\del}{\del \mu_i} \in T_{\mu} \Delta$ we compute that $d_{\mu}\Psi_b(X) = 0$ if and only if $db(X)=0$ and $b(\mu)X=0$ that is $\Psi_b$ is an immersion.
\end{proof}

\subsection{The Futaki invariant of a $\Reeb$ family}

%Recall that for a given labeled polytope $(\Delta,u)$, the extremal affine function $\zeta_{(\Delta,u)}$ is defined as follows:\\
%
%Choose a basis, $v_1$, ..., $v_n$, of $\kt$ which gives coordinates $\mu_0 = \bfone$, $\mu_1=\langle v_1,\mu\rangle$, $\dots$, $\mu_n=\langle v_n,\mu\rangle $ of $\mbox{Aff}(\kt^*,\bR)$. We define $\zeta_{(\Delta,u)}=\sum_{i=0}^n \zeta_i \mu_i$ where the vector $(\zeta_0,\dots,\zeta_n)\in \bR^{n+1}$ is the unique solution of the linear system
%\begin{equation}\label{systCHPext}
%\begin{split}
%\sum_{j=0}^n W_{ij}\, \zeta_j &= Z_i,\;\;\; i= 0,\dots, n\\
%\mbox{with }\quad W_{ij} = \int_{\Delta} \mu_i\mu_j\vol &\quad \mbox{ and }  \quad Z_i =-2\!\int_{\del \Delta} \mu_i \length,
% \end{split}
%\end{equation} where the volume form $\vol=d\mu_1\wedge \dots \wedge d\mu_n$ and the measure $\length$ on $\del \Delta$ defined by the equality $u_j\wedge \length=\vol$ on the facet $F_j$.\\
%For a given labeled polytope $(\Delta,u)$, the extremal affine function $\zeta_{(\Delta,u)}$ is a potential for the extremal vector field $Z_{\Omega}^T$ and thus is constant if and only if the Futaki invariant vanishes, see~\S\ref{sectionMETRIC}. It is clear from the definition, that the extremal affine function of $(\Delta,u)$ is equivariant under affine equivalence which correspond to conjugation or parametrization of the maximal torus $T$: If $A\in\mbox{Aff}(\kt^*,\kt^*)$ is invertible then $$\zeta_{(\Delta,u)_A}=\zeta_{(\Delta,u)}\circ A^{-1}$$ where $(\Delta,u)_A$ denotes the labeled polytope equivalent to $(\Delta,u)$ via the map $A$.
 The purpose of this paragraph is to find a functional over the Reeb family of $(\Delta,u)$, whose critical points are the Reeb vector $b\in\mC^*_+(\Delta)$ such that the extremal affine function $\zeta_{(\Delta_b,u_b)}$ is constant (i.e. the corresponding restricted Futaki invariant $\mF_b$ vanishes, see \S~\ref{sectFUTAKI=ZETA}).\\

\noindent {\bf Notation:} Fix a basis of $\kt$ giving coordinates $(y_0,\dots, y_n)$ on $\mbox{Aff}(\kt_b^*,\bR)^*$ as above and set $\vol=dy_0\wedge dy_1\wedge \dots \wedge dy_n$. For $b\in \mC^*_+(\Delta)$, put $\zeta(b)=\zeta_{(\Delta_b,u_b)}$ where $(\Delta_b,u_b)$ is the labeled polytope in the $\Reeb$ family of $(\Delta,u)$ given by $b$ via the parametrization of Proposition~\ref{explicitReebFamily}. Denote by $\vol_b$ the volume form on $\kt_b^*=\{ y\,|\, \langle y,b\rangle =1\}\subset \mbox{Aff}(\kt^*,\bR)^*$, satisfying $b\wedge\vol_b=\vol$ and $\length_b$ is the measure on $\del \Delta_b$ determined by the equality $L_l \wedge \length_b=-\vol_b$ on the facet $F_{b,l}=\Delta_b\cap L_l^o$. In the system of coordinates induced from $(y_0,\dots, y_n)$ on $\kt_b^*$, the affine extremal function is written $\zeta(b) = \zeta_0(b)+\sum_{i=1}^n\zeta_i(b)y_i\,\in\,\mbox{Aff}(\kt_b^*,\bR)$ where $(\zeta_0(b), \zeta_1(b),\dots,\zeta_n(b))\in \bR^{n+1}$ is the solution of a linear system~(\ref{systCHPext}) involving the functions:
 $$W_{ij}(b) = W_{ij}(\Delta_b)\;\;\mbox{ and }\;\;Z_i(b)= Z_i(\Delta_b,u_b)$$
computed using $\vol_b$.

\begin{rem} In the case where $L_1,\dots,L_d$ span a lattice $\Lambda$ for which $\mC(\Delta)$ is good, there is a contact manifold $(N,D)$ associated to $(\mC(\Delta),\Lambda)$. Then, as in~\cite{reebMETRIC}, one can compute that up to a positive multiplicative constant depending only on the dimension of $N$, $W_{00}(b)$ is the volume of $N$ with respect to the volume form $\eta_b\wedge(d\eta_b)^n$ where $\eta_b$ is the contact form of $X_b$.\end{rem}

\begin{lemma}\label{calculEAFb} For $i,j =1,\dots, n$
\begin{align*}%\begin{split}\label{formulaW(b)Z(b)}
W_{00}(b) = \int_{\Delta} \frac{1}{b(\mu)^{n+1}}\vol,\;\;\;& W_{ij}(b) =  \int_{\Delta} \frac{\mu_i\mu_j}{b(\mu)^{n+3}}\vol,\\
W_{i0}(b) = W_{0i}(b)= &\int_{\Delta} \frac{\mu_i}{b(\mu)^{n+2}}\vol
\end{align*}
and
\begin{align*}
Z_0(b) =2\int_{\del \Delta} \frac{1}{b(\mu)^{n}} \length,\;\; Z_i(b) =2\int_{\del \Delta} \frac{\mu_i}{b(\mu)^{n+1}} \length.
%\end{split}
\end{align*}
\end{lemma}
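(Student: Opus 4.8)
The plan is to obtain every one of these eight formulas as a single change of variables along the map $\Psi_b\co\Delta\ra\Delta_b$ of Proposition~\ref{explicitReebFamily}, whose coordinate expression (established in its proof) is $\Psi_b(\mu)=\frac{1}{b(\mu)}(1,\mu_1,\dots,\mu_n)$. In particular the restrictions of the ambient coordinates satisfy $y_i\circ\Psi_b=\mu_i/b(\mu)$ for $i=1,\dots,n$, so that under the identification $\mu_0^{(b)}=1$, $\mu_i^{(b)}=y_i$ the integrands $\mu_i^{(b)}\mu_j^{(b)}$ of $W_{ij}(b)=W_{ij}(\Delta_b)$ and $\mu_i^{(b)}$ of $Z_i(b)=Z_i(\Delta_b,u_b)$ pull back to $\frac{\mu_i\mu_j}{b(\mu)^2}$ and $\frac{\mu_i}{b(\mu)}$ respectively. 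Thus the whole lemma reduces to the two density identities $\Psi_b^*\vol_b=b(\mu)^{-(n+1)}\vol$ on $\mathring{\Delta}$ and $\Psi_b^*\length_b=b(\mu)^{-n}\length$ on each facet, where $\vol=d\mu_1\wedge\cdots\wedge d\mu_n$ and $\length$ are the volume and facet measures of $\Delta\subset\kt^*$; inserting these densities together with the pulled-back integrands immediately yields all the claimed expressions.

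For the interior identity I would compute $\Psi_b^*\vol_b$ directly. Writing $b=b_0\bfone+\sum_{i=1}^n b_iv_i$, the defining relation $b\wedge\vol_b=dy_0\wedge\cdots\wedge dy_n$ gives $\vol_b=b_0^{-1}\,dy_1\wedge\cdots\wedge dy_n$ on $\kt_b^*$ (valid whenever $b_0\neq0$, which may be arranged by the choice of basis). Since $y_i\circ\Psi_b=\mu_i/b(\mu)$ we have $\Psi_b^*(dy_i)=\frac{d\mu_i}{b(\mu)}-\frac{\mu_i}{b(\mu)^2}\,db$, and wedging these $n$ one-forms I use $db\wedge db=0$ to keep only the terms carrying at most one factor of $db$; collecting them, substituting $db=\sum_k b_k\,d\mu_k$, and using $b(\mu)-\sum_i b_i\mu_i=b_0$ collapses the product to $\frac{b_0}{b(\mu)^{n+1}}\,d\mu_1\wedge\cdots\wedge d\mu_n$. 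The factor $b_0$ cancels the one in $\vol_b$, leaving the coordinate-free answer $\Psi_b^*\vol_b=b(\mu)^{-(n+1)}\vol$, which already proves the three $W$-formulas. (Equivalently, via the radial decomposition~(\ref{decompCanonicalCone}) one notes that the map $(\mu,s)\mapsto s\,\Psi_b(\mu)$ has $\langle\cdot,b\rangle=s$, so slicing $dy_0\wedge\cdots\wedge dy_n=ds\wedge(\cdots)$ along $\{\langle\cdot,b\rangle=s\}$ recovers the same Jacobian $s^n b(\mu)^{-(n+1)}$, and restricting to $s=1$ gives the claim.)

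For the boundary, observe first that $\langle\Psi_b(\mu),L_l\rangle=L_l(\mu)/b(\mu)$, so $\Psi_b$ carries the facet $F_l=\{L_l=0\}$ of $\Delta$ onto the facet $F_{b,l}$ of $\Delta_b$. Pulling back the defining relation $[L_l]_b\wedge\length_b=-\vol_b$ by $\Psi_b$ and using $\Psi_b^*\big(d(L_l|_{\kt_b^*})\big)=d(L_l\circ\Psi_b)=\frac{u_l}{b(\mu)}-\frac{L_l}{b(\mu)^2}\,db$, which restricts on $F_l$ (where $L_l=0$) to $\frac{u_l}{b(\mu)}$, the interior identity gives on $F_l$
\begin{equation*}
\frac{u_l}{b(\mu)}\wedge\Psi_b^*\length_b=-\Psi_b^*\vol_b=\frac{1}{b(\mu)^{n+1}}\,(u_l\wedge\length).
\end{equation*}
Since wedging a tangential $(n-1)$--form on $F_l$ with the transverse covector $u_l$ is injective, this forces $\Psi_b^*\length_b=b(\mu)^{-n}\length$, and combined with $\mu_i^{(b)}\circ\Psi_b=\mu_i/b(\mu)$ it yields the two $Z$-formulas.

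The only genuinely delicate points I anticipate are this last step — matching the facet measures through the relation $u\wedge\length=-\vol$ while keeping the orientations consistent so that the resulting densities come out positive — and the bookkeeping in the wedge expansion of $\Psi_b^*\vol_b$. Both are routine once the vanishing of $L_l$ on $F_l$ and of $db\wedge db$ are exploited, and it is reassuring that the auxiliary constant $b_0$ (hence the choice of basis used to trivialize $\vol_b$) drops out of the final, intrinsic formulas.
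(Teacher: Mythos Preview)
Your proof is correct and follows the same overall strategy as the paper: both reduce the lemma to the two density identities $\Psi_b^*\vol_b=b(\mu)^{-(n+1)}\vol$ and $\Psi_b^*\length_b=b(\mu)^{-n}\length$, and then read off the integrals. Your treatment of the boundary measure is essentially identical to the paper's.

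The one genuine difference is in how $\Psi_b^*\vol_b$ is computed. The paper chooses an auxiliary point $p\in\Delta$, writes $\vol_b=\frac{1}{b(p)}\bigl(dy_1\wedge\cdots\wedge dy_n+dy_0\wedge\alpha\bigr)$ for a suitable $(n-1)$--form $\alpha$, introduces the linear map $A_\mu=b(\mu)^2\,d_\mu\Psi_b+\frac{\del}{\del y_0}\otimes db$, and appeals to the identities $\det A_\mu=b(0)\,b(\mu)^{n-1}$ and $db\wedge A_\mu^*\alpha_k=-b_kp_k\,b(\mu)^{n-1}\vol$ (the determinant being justified by an induction left to the reader). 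Your computation instead assumes $b_0\neq0$, uses the simple representative $\vol_b=b_0^{-1}\,dy_1\wedge\cdots\wedge dy_n$, and expands the wedge of the pulled-back $dy_i$'s directly, exploiting $db\wedge db=0$. This is shorter and more transparent; the price is the harmless hypothesis $b_0\neq0$, which as you note can always be arranged by the choice of basis (and which the paper itself adopts as a standing convention immediately after the lemma).
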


\begin{proof} We use the coordinates systems on $\kt^*$ and $\mbox{Aff}(\kt^*,\bR)$ introduced in the proof of Proposition~\ref{explicitReebFamily}.

Choose $p=(p_1,\dots,p_n)\in\Delta$ and denote
 \begin{equation*}%\label{volb}
\vol_b = \frac{1}{b(p)}\left(dy_1\wedge \dots \wedge dy_n + \sum_{i=1}^n(-1)^{i+1}p_i dy_0\wedge \dots  \wedge\widehat{dy_i}\wedge \dots\wedge dy_n \right).
\end{equation*}
Thus $b\wedge\vol_b=dy_0\wedge dy_1\wedge \dots \wedge dy_n$, since $b=b(0)dy_0 +\sum_{i=1}^n b_idy_i$ when viewed as an element of $T^*_y\mbox{Aff}(\kt^*,\bR)^*$. By setting $\alpha =\sum_{k=1}^n \alpha_k$ with $\alpha_k = (-1)^{k+1}p_k dy_1\wedge \dots  \wedge\widehat{dy_k}\wedge \dots\wedge dy_n$, we get
$$\vol_b = \frac{1}{b(p)}\left(dy_1\wedge \dots \wedge dy_n + dy_0\wedge \alpha\right).$$
On the other hand, put $A_{\mu}=\sum_{i,j =1}^n \left( b(\mu)  \frac{\del}{\del y_i}\otimes d\mu_i - b_i\mu_j  \frac{\del}{\del y_i}\otimes d\mu_j\right)$. In view of the expression of $d_{\mu}\Psi_b$~(\ref{d_bPsi}), $A_{\mu}=b(\mu)^2d_{\mu}\Psi_b + \frac{\del}{\del y_0}\otimes db$. Moreover, $A_{\mu}$ is a morphism between $T_{\mu}\Delta$ and the kernel of $dy_0$ in the tangent space of $\mbox{Aff}(\kt_b^*,\bR)^*$ and one can prove that\footnote{For example, a proof can use induction on $n$ when viewing $A_{\mu}$ as a $n\times n$-- matrix depending on two vectors $\mu\in\bR^n$ and $b\in\bR^{n+1}$.}
\begin{equation*}
  \det A_{\mu} = b(0)b(\mu)^{n-1}\;\;\mbox{ and }\;\;\;db\wedge A_{\mu}^*\alpha_k=-b_kp_kb(\mu)^{n-1}d\mu_1 \wedge \dots\wedge d\mu_n.
\end{equation*}
Hence, since $\alpha$ is a $(n-2)$--form on $\mbox{Aff}(\kt^*,\bR)$ such that $\alpha(\frac{\del }{\del y_0})=0$ we have
\begin{equation*}(\Psi_b^*\vol_b)_{\mu} =\frac{1}{b(p)b(\mu)^{2n}}\left( A^*_{\mu}dy_1\wedge \dots \wedge dy_n  - db\wedge  A^*_{\mu}\alpha\right)= \frac{1}{b(\mu)^{n+1}}\vol.
\end{equation*}
 where $db= \sum_{i=1}^nb_id\mu_i$, via the identification $T^*_\mu \kt^*\simeq \kt$. The first part of the Lemma~\ref{calculEAFb} follows then easily and it remains to prove the statement concerning the functions $Z_0$ and $Z_i$. Note that $\Psi^*_b L_l =\frac{1}{b(\mu)}u_l$ and then
\begin{equation*}
  u_l\wedge(\Psi_b^*\length_b)_{\mu} = b(\mu)\Psi_b^*\left(L_l\wedge\length_b\right)_{\mu} = -b(\nu)(\Psi_b^*\vol)_{\mu} = -\frac{1}{b(\mu)^n}\vol.
\end{equation*}
This shows that $(\Psi_b^*\length_b)_{\mu}= \frac{1}{b(\mu)^n}\length$ for $\mu\in\del\Delta$ which concludes the proof.
\end{proof}

\noindent {\bf Convention:}  For now on, we suppose that $b(0)=1$. There is no loss of generality since, in view of the defining equations~(\ref{systCHPext}), for $r>0$ we have $$\zeta(rb) = r\zeta_0(b)+r^2\sum_{i=1}^n\zeta_i(b)\mu_i.$$ Note that $\Omega =\{b\in\mC^*_+(\Delta)\,|\,b(0)=1\}$ is relatively compact in $\mbox{Aff}(\kt^*,\bR)$.
\begin{proposition} The critical points of the functional $F: \Omega\ra \bR$, defined as $$F(b)= \frac{Z_0(b)^{n+1}}{W_{00}(b)^{n}},$$ are the affine-linear functions $b$ for which $\zeta(b)$ is constant.
\end{proposition}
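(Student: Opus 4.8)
The plan is to differentiate $F$ directly using the explicit integral formulas of Lemma~\ref{calculEAFb}, and then to recognize the resulting critical-point equations as exactly the off-diagonal rows of the linear system~(\ref{systCHPext}) defining $\zeta(b)$.

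Since $b(0)=1$, I parametrize $\Omega$ by the coefficients $(b_1,\dots,b_n)$ in $b=\bfone+\sum_{i=1}^n b_i v_i$, so that $b(\mu)=1+\sum_i b_i\mu_i$ and $\frac{\del}{\del b_k}b(\mu)=\mu_k$; write $\del_k=\frac{\del}{\del b_k}$. The domain $\Delta$ in the formulas of Lemma~\ref{calculEAFb} is independent of $b$, and only the powers of $b(\mu)$ in the integrands depend on $b$. First I would differentiate under the integral sign using $\del_k\, b(\mu)^{-m}=-m\,\mu_k\,b(\mu)^{-(m+1)}$: this raises the exponent by one and inserts a factor $\mu_k$, which upon comparison with Lemma~\ref{calculEAFb} gives
$$\del_k W_{00}(b)=-(n+1)\,W_{0k}(b),\qquad \del_k Z_0(b)=-n\,Z_k(b).$$

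Next I would take the logarithmic derivative of $F=Z_0^{n+1}/W_{00}^{n}$, obtaining
$$\frac{\del_k F}{F}=(n+1)\frac{\del_k Z_0}{Z_0}-n\frac{\del_k W_{00}}{W_{00}} = n(n+1)\left(\frac{W_{0k}(b)}{W_{00}(b)}-\frac{Z_k(b)}{Z_0(b)}\right),$$
so that $b\in\Omega$ is a critical point of $F$ if and only if $Z_k(b)W_{00}(b)=Z_0(b)W_{0k}(b)$ for every $k=1,\dots,n$. It then remains to match this system with the vanishing of $\zeta_1(b),\dots,\zeta_n(b)$. If $\zeta(b)$ is constant, rows $i=0$ and $i=k$ of~(\ref{systCHPext}) read $W_{00}\zeta_0=Z_0$ and $W_{0k}\zeta_0=Z_k$, and eliminating $\zeta_0$ recovers the critical-point equations. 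Conversely, given the critical-point equations I would set $c=Z_0(b)/W_{00}(b)$, check that $(c,0,\dots,0)$ solves~(\ref{systCHPext}) (using $W_{0i}=W_{i0}$), and invoke the uniqueness of the solution to conclude $\zeta(b)=c$ is constant.

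The one point requiring care --- rather than a genuine obstacle --- is twofold. First, the bookkeeping of the three distinct exponents $n+1,n+2,n+3$ appearing in Lemma~\ref{calculEAFb}: it is precisely the unit shift in exponent under $\del_k$ that converts $W_{00}$ and $Z_0$ into the off-diagonal quantities $W_{0k}$ and $Z_k$, so that the critical-point equations align with~(\ref{systCHPext}). Second, the appeal to uniqueness in the converse direction: this is justified because $(W_{ij}(b))_{i,j=0}^n$ is the Gram matrix of the affinely independent functions $1,\mu_1,\dots,\mu_n$ on $\Delta_b$ with respect to $\vol_b$, hence positive definite and in particular invertible.
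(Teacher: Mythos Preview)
Your proof is correct and follows essentially the same approach as the paper: both compute $\del_k W_{00}=-(n+1)W_{0k}$ and $\del_k Z_0=-nZ_k$ by differentiating the integral formulas, reduce $d_bF=0$ to the system $W_{0k}Z_0=W_{00}Z_k$, and identify this with the condition that $\zeta(b)$ be constant via the defining system~(\ref{systCHPext}). Your treatment is slightly more explicit in the converse direction (exhibiting $(Z_0/W_{00},0,\dots,0)$ as a solution and invoking invertibility of the Gram matrix), whereas the paper leaves this implicit.
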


\begin{proof}
  Notice that $\zeta(b)$ is constant if and only if $\zeta(b)=\zeta_0(b)$ which happens if and only if $\zeta_0(b)$ is solution of the linear system
\begin{equation}\label{linearSYSTzeta(b)=cst}  W_{i0}(b)\, \zeta_0(b) = Z_i(b),\;\;\; i= 0,\dots, n.\end{equation}
In that case, $\zeta_0(b) = \frac{Z_0(b)}{W_{00}(b)}$ with $b$ a solution of:
\begin{equation}\label{EquationZETA(b)=cst} W_{i0}(b)Z_0(b) - W_{00}(b)Z_i(b)=0,\;\;\; i= 1,\dots, n.\end{equation}
 By noticing that for $i= 1,\dots, n$, $$\frac{\del}{\del b_i} W_{00}(b)=-(n+1)W_{i0}(b) \;\;\;\mbox{ and } \;\;\; \frac{\del}{\del b_i} Z_{0}(b)=-nZ_{i}(b),$$ we compute the differential of $F$ at $b\in \Omega$: $$d_bF = \frac{-n(n+1)Z_0(b)^n}{W_{00}(b)^{n+1}}\sum_{i=1}^n(W_{i0}(b)Z_0(b) - W_{00}(b)Z_i(b)) db_i.$$ which concludes the proof.
\end{proof}

\begin{lemma}\label{divW} Let $b\in\Omega$, then $$W_{00}(b)= n\sum_l \int_{F_l}\frac{L_l(0)}{b^n}\length.$$
\end{lemma}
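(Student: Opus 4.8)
The plan is to read off from Lemma~\ref{calculEAFb} that $W_{00}(b) = \int_{\Delta} b(\mu)^{-(n+1)}\vol$ and to exhibit the integrand as a divergence, so that Stokes' theorem converts the bulk integral over $\Delta$ into a sum of facet integrals (which explains the label ``divW''). The natural candidate vector field is the radial (Euler) field $E = \sum_{i=1}^n \mu_i\frac{\del}{\del\mu_i}$ on $\kt^*$: it has constant divergence $\Div E = n$, and its flux through a facet $F_l$ will turn out to measure exactly the ``height'' $L_l(0)$ of that facet over the origin. Note that $b>0$ on $\Delta$ (since $b\in\mC^*_+(\Delta)$), so all integrands below are smooth on $\Delta$.

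The crucial computation is the divergence identity. Writing $b = 1 + \sum_i b_i\mu_i$ and invoking the running convention $b(0)=1$, one gets $E(b) = \sum_i b_i\mu_i = b-1$, hence
\begin{equation*}
\Div\!\left(\frac{E}{b^{n}}\right) = \frac{n}{b^n} + E\!\left(\frac{1}{b^n}\right) = \frac{n}{b^n} - \frac{n\,(b-1)}{b^{n+1}} = \frac{n}{b^{n+1}} .
\end{equation*}
Thus $\int_{\Delta}\Div(E/b^n)\,\vol = n\,W_{00}(b)$. It is precisely the normalization $b(0)=1$ that produces the surviving term $n\,b^{-(n+1)}$; were $b$ linear this divergence would vanish.

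It then remains to evaluate the flux. By the divergence theorem $\int_{\Delta}\Div(E/b^n)\,\vol = \int_{\del\Delta}\iota_{E/b^n}\vol = \sum_l \int_{F_l}\iota_{E/b^n}\vol$. On a facet $F_l$ I would use the defining relation $dL_l\wedge\length = -\vol$ to expand $\iota_{E/b^n}\vol = -b^{-n}\,(dL_l(E))\,\length + dL_l\wedge\iota_{E/b^n}\length$; the second summand pulls back to zero on $F_l$ because $L_l$ is constant there, so $dL_l$ annihilates $T F_l$. Finally $dL_l(E) = E(L_l) = \langle\mu,u_l\rangle = L_l(\mu) - L_l(0)$, which on $F_l$ equals $-L_l(0)$ since $L_l\equiv 0$ on its own facet. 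Substituting gives $\iota_{E/b^n}\vol\big|_{F_l} = b^{-n}L_l(0)\,\length$, and summing over the facets yields the asserted relation between $W_{00}(b)$ and $\sum_l\int_{F_l}b^{-n}L_l(0)\,\length$.

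The only delicate point is the boundary bookkeeping: one must fix the orientation of $\del\Delta$ and the precise meaning of the measure $\length$ (normalized by $dL_l\wedge\length=-\vol$), since this is where the overall multiplicative constant in the identity is pinned down. Once that sign and constant are settled, the proof is just the two-line divergence computation together with the facet evaluation $dL_l(E)|_{F_l} = -L_l(0)$; no convexity or goodness of the cone is needed, only that $\Delta$ is a polytope, that $b>0$ on $\Delta$, and that $b(0)=1$.
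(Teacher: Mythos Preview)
Your proof is correct and follows essentially the same approach as the paper: both introduce the vector field $X=\sum_i \mu_i b^{-n}\,\partial/\partial\mu_i$ (your $E/b^n$), compute $\Div X = n\,b^{-(n+1)}$ using the normalization $b(0)=1$, and then apply the divergence theorem together with the facet identity $\langle u_l,\mu\rangle\big|_{F_l} = -L_l(0)$. Your write-up is more detailed in tracking the boundary terms, but the argument is the same.
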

\begin{proof} For $b\in\Omega$, put $X=\sum_i\frac{\mu_i}{b^{n}}\frac{\del}{\del\mu_i}$ so we have $\mbox{div} X= \frac{n}{b^{n+1}}$.
% $$
%   \mbox{div}\sum_i\frac{\mu_i}{b^{n}}\frac{\del}{\del\mu_i}=\sum_i\frac{\del}{\del\mu_i}\frac{\mu_i}{b^{n}}= \sum_i \frac{1}{b^{n}} - n\frac{b_i\mu_i}{b^{n+1}}= n\frac{b-(b-1)}{b^{n+1}}= \frac{n}{b^{n+1}}.$$
  Thus,

  $$\frac{1}{n}W_{00}(b) = \int_{\del\Delta} \iota_X \vol= -\sum_l\int_{F_l} \frac{\langle u_l,\mu \rangle}{b^n}\length = \sum_l \int_{F_l}\frac{L_l(0)}{b^n}\length$$
since $u_l\wedge \sum_i\iota_{\frac{\del}{\del\mu_i}}\vol = \langle u_l,\mu \rangle \vol$.
\end{proof}

Consider the map $x=(x_1,\dots,x_d)\co\Omega\ra \bR^d$ whose components are the strictly convex and strictly positive functions $$x_l=x_l(b)= \int_{F_l}\frac{1}{b^n}\length.$$
With this notation and by putting $\lambda_l=L_l(0)$, the functionals $W_{00}$, $Z_0$ and $F$ are
\begin{equation}\label{WZFx}
  W_{00} (b) = n\sum_{i=1}^d \lambda_lx_l,\;\;\;Z_{0} (b) = 2\sum_{i=1}^d x_l \;\;\mbox{ and }\;\;F(b)=\frac{\left(2\sum x_l\right)^{n+1}}{(\sum n\lambda_lx_l)^{n}}.
\end{equation}
For each $l=1,\dots, d$, $F_l$ is a $(n-1)$--dimensional polytope whose volume is $x_l$, up to a positive multiplicative constant determined by $u_l$. This suggests to apply Lemma~\ref{divW} recursively. Note that $\int_{E}\frac{1}{b^2}\length(E)=\frac{1}{b(p_E)b(q_E)}$, where $\length(E)$ is the Lebesgue measure on the edge $E$ and $p_E$ and $q_E$ denote the vertices of $\Delta$ lying in $E$. Therefore, for each edge there are suitable constants $\alpha(E)$, $\beta(E)$ so that
  \begin{equation}\label{Wvertices}
W_{00}(b)=\sum_{E \in\,\mathrm{edges}(\Delta)} \frac{\alpha(E)}{b(p_E)b(q_E)} \;\;\mbox{ and }\;\;Z_0(b)= \sum_{E \in\,\mathrm{edges}(\Delta)}\frac{\beta(E) }{b(p_E)b(q_E)}.
\end{equation}
Moreover, $F$ is a rational function of the values of $b$ on the vertices. More precisely,
 \begin{equation}
   F(b)=  \left(\prod_{\nu\in \,\mathrm{vert}(\Delta)}\frac{1}{b(\nu)}\right)\cdot\frac{\left(\sum \beta(E)\prod_{\nu\notin E}b(\nu) \right)^{n+1}}{\left(\sum \alpha(E)\prod_{\nu\notin E}b(\nu) \right)^{n}}
 \end{equation}
 where the sums are taken over edges of $\Delta$.
\begin{proof}[Proof of Theorem~\ref{theoExist}]
Formulas~(\ref{WZFx}) imply that $\lambda=\mbox{max}\{\lambda_l\,|\,l=1,\dots, d\} \, > 0$ and
$$F(b) \geq  \frac{\left(2\sum x_l\right)^{n+1}}{(n\lambda\sum x_l)^{n}}= \frac{2^{n+1}}{(\lambda n)^n} \sum x_l=\frac{2^{n}}{(\lambda n)^n} Z_0(b).$$
Moreover, $\del\Omega$ is the set of affine-linear functions vanishing on the boundary of $\Delta$ (but not on the interior). In particular they vanish on some vertices of $\Delta$. Since $Z_0(b)$ only depends on the value of $b$ on the vertices, see~(\ref{Wvertices}), $Z_0(b)$ and thus $F(b)$ converge to infinity when $b$ converges to a point of $\del \Omega$.

Hence, Theorem~\ref{theoExist} follows from the fact that $F$ is a strictly positive function, defined on a relatively compact open set $\Omega$, and converging to infinity at the boundary. In particular, $F$ must have a critical point somewhere in $\Omega$.
\end{proof}

\begin{proposition}
Let $(\Delta,u)$ be a $n$--dimensional labeled polytope with $N$ vertices. The set of critical points of $F$ is a real algebraic set given as the common roots of $n$ polynomials in $n$ variables of degree $2N-3$. \end{proposition}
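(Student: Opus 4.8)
The plan is to realize the critical set as the common zero locus of the $n$ partial derivatives of $F$ and then to read off the degrees from the explicit expression of $F$ as a rational function of the values of $b$ at the vertices. First I would fix the basis of Proposition~\ref{explicitReebFamily}, so that $\Omega$ is parametrized by $t=(t_1,\dots,t_n)$ with $b=\bfone+\sum_i t_iv_i$, and set $c_\nu(t):=b(\nu)=1+\sum_i t_i\langle\nu,v_i\rangle$; each $c_\nu$ is an affine (degree one) function of $t$, and $c_\nu>0$ on $\Omega$ since $b$ is positive on $\Delta$. By the display preceding the proof of Theorem~\ref{theoExist},
\[
F=\frac{P^{\,n+1}}{D\,Q^{\,n}},\qquad
P=\sum_E\beta(E)\!\prod_{\nu\notin E}\!c_\nu,\quad
Q=\sum_E\alpha(E)\!\prod_{\nu\notin E}\!c_\nu,\quad
D=\prod_\nu c_\nu,
\]
all three being polynomials in $t$ that are strictly positive on $\Omega$.

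Next I would differentiate. A direct computation, pulling out the factor $P^nQ^{-n-1}D^{-2}$, which never vanishes on $\Omega$, gives
\[
\partial_iF=\frac{P^n}{D^2Q^{\,n+1}}\,R_i,\qquad
R_i=(n+1)(\partial_iP)DQ-P(\partial_iD)Q-nPD(\partial_iQ).
\]
Because $P,Q,D>0$ on $\Omega$, the critical set of $F$ is exactly $\{\,t\in\Omega:R_1(t)=\dots=R_n(t)=0\,\}$, which already displays it as a real algebraic set cut out by $n$ polynomials in the $n$ variables $t_1,\dots,t_n$. (Equivalently one may start from the characterization $W_{i0}Z_0-W_{00}Z_i=0$ and clear denominators, arriving at the same $R_i$ up to a nonvanishing multiple.)

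The final and decisive step is the degree count. The crucial observation is that $P$, $Q$ are \emph{not} of the naive degree $N-2$ suggested by the edge sums: the integral representations $W_{00}=\int_\Delta b^{-(n+1)}\vol$ and $Z_0=2\int_{\partial\Delta}b^{-n}\length$ of Lemma~\ref{calculEAFb} have only simple poles along each hyperplane $\{c_\nu=0\}$ and a prescribed homogeneous decay as $t\to\infty$, which forces $\deg D=N$ together with sharp values of $\deg P$ and $\deg Q$ (the relations among the weights $\alpha(E),\beta(E)$ forced by the geometry kill the top-order parts of the edge sums). Feeding these sharp degrees into $R_i$ and tracking the cancellation of the leading terms of its three summands yields the stated degree $2N-3$. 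I expect this sharp degree estimate to be the main obstacle: the three terms of $R_i$ carry the same a priori top degree, so reaching exactly $2N-3$ rather than the crude bound requires pinning down the precise numerator degrees of $P$ and $Q$ from the polar/decay structure of the defining integrals and verifying that the top-order contributions to $R_i$ cancel.
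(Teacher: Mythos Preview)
Your setup mirrors the paper's: both exploit the edge-sum formulas \eqref{Wvertices} for $W_{00}$ and $Z_0$ to express $F$ rationally in the vertex values $b(\nu)$ and then differentiate. The paper differentiates the edge sums directly to obtain \eqref{equatdF=0} and extracts the $n$ coefficients of $h(p_1),\dots,h(p_n)$ for a chosen set of linearly independent vertices; your logarithmic-derivative computation produces the same polynomials up to a nowhere-vanishing factor. So at the structural level there is no difference.

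Where both arguments are thin is the degree bound, and here your proposal has a genuine gap. The paper simply asserts that ``up to a suitable positive multiplicative constant, $P_i$ is a polynomial of degree at most $2N-3$'' without further justification. Your proposed route via the pole and decay structure of the integrals is correct in spirit but does not reach the target. The homogeneity of the defining integrals ($W_{00}$ of degree $-(n+1)$ and $Z_0$ of degree $-n$ in the unrestricted variable $b\in\mbox{Aff}(\kt^*,\bR)$) indeed forces $\deg Q\le N-n-1$ and $\deg P\le N-n$ after setting $b(0)=1$; this is precisely the ``killing of top-order parts of the edge sums'' you describe, and it is sharper than the naive $N-2$. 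But feeding these bounds into $R_i=(n+1)(\partial_iP)DQ-P(\partial_iD)Q-nPD(\partial_iQ)$ yields only
\[
\deg R_i\;\le\;3N-2n-2,
\]
which exceeds $2N-3$ as soon as $N>2n-1$ (already for the quadrilateral in dimension two). Thus the ``cancellation of top-order contributions to $R_i$'' you invoke at the end is not a consequence of your degree bounds on $P,Q,D$; it is a separate claim that you have not established. The paper does not establish it either, and in its own quadrilateral example (Lemma~\ref{NbCRITICALbounded}) the polynomials come out cubic, well below $2N-3=5$, so further cancellation certainly occurs, but its general mechanism is left unexplained by both arguments.
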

\begin{proof} By using~(\ref{Wvertices}), $d_bF =0$ if and only if for any linear function $h\co \kt^*\ra\bR$
\begin{equation}\label{equatdF=0}
\sum_{E} \frac{(nZ_0\alpha(E) - (n+1)W_{00}\beta(E))(h(p_E)b(q_E) +h(q_E)b(p_E))}{b(p_E)^2b(q_E)^2}= 0.
\end{equation}

A linear function is determined by its values on $n$ linearly independent points. Taking a set of $n$ linearly independent vertices of $\Delta$, say $p_1, \dots, p_n$, (\ref{equatdF=0}) may be written as a homogeneous equation in $n$ variables $h(p_1) , \dots, h(p_n)$. In particular, if~(\ref{equatdF=0}) holds for any linear function $h$, the coefficient $P_i$ of $h(p_i)$ in~(\ref{equatdF=0}) vanishes for all $i$. These coefficients $P_i$ are functions of $b(p_1),\dots, b(p_n)$ (since $b(0)=1$, the affine-linear function $b$ is determined by its value at $p_1, \dots,p_n$). %Without determining the exact expression of these functions $D_1$, $\dots$, $D_n$
It is easy to see that, up to a suitable positive multiplicative constant, $P_i$ is a polynomial of degree at most $2N-3$ in $n$ variables $b(p_1),\dots, b(p_n)$.\end{proof}

\subsubsection{The Sasaki--Einstein case}
%\begin{rem}\label{Gorenstein} It well-known~\cite{BG:book,reebMETRIC,FutakiOnoWang}, that a primary obstruction to existence of Sasaki-Einstein structure on a contact manifold with a fixed Reeb vector field is that $c_1^B>0$ and $c_1(\bfD)=0$ where $c_1(\bfD)$ is the first Chern class of the contact bundle $\bfD\ra N$. This condition is equivalent to the condition that the primitive normal vectors of the moment cone $\mC$ lie on a fixed affine hyperplane, see also~\ref{abreuSasakAAcoord}.
%\end{rem}

Let $(N, \bfD)$ be a contact manifold such that $c_1(\bfD)$, the first Chern class of the contact bundle $\bfD$, vanishes. In~\cite{reebMETRIC}, it is shown that the normalized Reeb vector field for which the transversal Futaki invariant is zero corresponds to the critical point of the volume functional and that such a point is unique. In our setting, this implies that, if $c_1(\bfD)=0$, the critical point of $F$ is unique and corresponds to the critical point of $W_{00}(b)$ in $\Omega$.

The condition $c_1(\bfD)=0$ is a necessary condition for the existence of a Sasaki--Einstein metric and corresponds to the fact that the primitive normals of the moment cone lie in a hyperplane, see~\cite{FutakiOnoWang,reebMETRIC}. Moreover, if $X_b$ is the Reeb vector field of a Sasaki--Einstein metric then the basic first Chern class $c_1^B = 2(n+1)[d\eta_b]_B$ which implies that $(\Delta_b,u_b)$ is {\it monotone} in the sense that:

\begin{definition} A labeled polytope $(\Delta,u)$ is \defin{monotone} if there exists $\mu \in \mathring{\Delta}$ such that $L_l(\mu)= c$ for all $l=1,\dots,d$ where $c$ is some positive constant.\end{definition}

In~\cite{do:Large}, integral polytopes which are monotone (with respect to the normals primitive in dual lattice in which lie the vertices of the polytope) are called Fano polytopes. This terminology is justified since a smooth toric variety $X$ is Fano in the usual sense (i.e the anticanonical line bundle $-K_X$ is ample) if and only if the integral Delzant polytope associated to $(X, -K_X)$ is monotone in the sense above. Equivalently, the symplectic manifold associated to this integral Delzant polytope is monotone (i.e the symplectic class coincide up to a multiplicative constant to the first Chern class). In the orbifold case, one can prove that a rational labeled polytope $(\Delta,u)$ is monotone if only if the associated symplectic toric orbifold $(M,\omega)$ is monotone. The next lemma is straightforward.

\begin{lemma}\label{LemmaFanoReebFamily}
The defining functions of a {\it monotone} polytope lie in a hyperplane and any labeled polytope in the Reeb family of a monotone labeled polytope is monotone.
\end{lemma}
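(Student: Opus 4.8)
The plan is to treat the two assertions separately, each by exhibiting an explicit witness point, so there is very little to do beyond pinning down the defining functions in each case.

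For the first assertion I would start from the definition of monotonicity: there is $\mu \in \mathring{\Delta}$ with $L_l(\mu) = c$ for every $l$, where $c > 0$. The observation is that the evaluation map $e_\mu \co \mathrm{Aff}(\kt^*,\bR) \to \bR$, $L \mapsto L(\mu)$, is a nonzero linear functional on the $(n+1)$--dimensional space $\mathrm{Aff}(\kt^*,\bR)$. Monotonicity says precisely that $e_\mu(L_l) = c$ for all $l$, so all the defining functions lie in the single affine hyperplane $e_\mu^{-1}(c) \subset \mathrm{Aff}(\kt^*,\bR)$. This is the hyperplane asserted in the statement, and it specializes to the familiar ``primitive normals lie in a hyperplane'' condition recalled just above when the $L_l$ span a lattice.

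For the second assertion I would use the explicit parametrization of the Reeb family from Proposition~\ref{explicitReebFamily}. Fix $b \in \mC^*_+(\Delta)$; then $\Delta_b = \Psi_b(\Delta)$ with $\Psi_b(\mu) = e_\mu / b(\mu)$, and the defining functions of $(\Delta_b,u_b)$ are the restrictions $\hat{L}_l = \langle \,\cdot\,, L_l\rangle|_{\kt_b^*}$ of the coordinate functionals $L_l$ on $\mathrm{Aff}(\kt^*,\bR)^*$ to the affine slice $\kt_b^*$. Indeed their linear parts are exactly the classes $[L_l]_b = u_{b,l}$, and each $\hat{L}_l$ vanishes on the facet $F_{b,l} = \Delta_b \cap L_l^o$, as was already used in the proof of Lemma~\ref{calculEAFb} via $\Psi_b^* L_l = \frac{1}{b(\mu)} u_l$. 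The natural candidate witness for monotonicity of $(\Delta_b,u_b)$ is then $y_0 = \Psi_b(\mu)$, the image of the witness $\mu$ of monotonicity of $(\Delta,u)$; it lies in $\mathring{\Delta}_b$ because $\Psi_b$ is a diffeomorphism carrying $\mathring{\Delta}$ onto $\mathring{\Delta}_b$.

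The required computation is then immediate:
$$\hat{L}_l(y_0) = \left\langle \frac{e_\mu}{b(\mu)},\, L_l\right\rangle = \frac{1}{b(\mu)}\, e_\mu(L_l) = \frac{L_l(\mu)}{b(\mu)} = \frac{c}{b(\mu)},$$
which is independent of $l$ and strictly positive, since $c > 0$ and $b(\mu) > 0$ (as $b$ is strictly positive on $\Delta \ni \mu$). Thus $y_0$ witnesses the monotonicity of $(\Delta_b,u_b)$ with positive constant $c/b(\mu)$. Finally, since monotonicity is manifestly invariant under affine equivalence—an affine map sends a witness point to a witness point with the same constant—the conclusion descends to equivalence classes and hence holds throughout the Reeb family. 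The only point requiring care, and the nearest thing to an obstacle, is correctly identifying the defining functions of $(\Delta_b,u_b)$ as the slice restrictions of the $L_l$; once that is in place, the whole content is the remark that the constancy of $L_l(\mu)$ in $l$ survives the common rescaling by $b(\mu)$ built into $\Psi_b$.
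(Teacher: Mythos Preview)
Your proof is correct and is exactly the direct verification the paper has in mind: the paper gives no proof beyond the sentence ``The next lemma is straightforward,'' and your two witness-point arguments (the evaluation $e_\mu$ for the hyperplane, and $\Psi_b(\mu)$ for monotonicity of $(\Delta_b,u_b)$) are the natural way to unpack that.
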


\begin{theorem}\cite{reebMETRIC} If $(\Delta,u)$ is monotone then $F$ has a unique critical point. Moreover, up to a translation of $\Delta$, there exists a constant $\lambda>0$ such that $W=\lambda Z$.
\end{theorem}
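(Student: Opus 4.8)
The plan is to exploit the special structure of the monotone case, where by Lemma~\ref{LemmaFanoReebFamily} all defining functions $L_l$ lie in a common affine hyperplane of $\mbox{Aff}(\kt^*,\bR)$. Concretely, after translating $\Delta$ so that the distinguished interior point is the origin, monotonicity gives $L_l(0)=c$ for all $l$, i.e. $\lambda_l=c$ for every facet, with $c>0$ a fixed constant. Substituting this into the formulas~(\ref{WZFx}) immediately yields $W_{00}(b)=n c\sum_l x_l$ and $Z_0(b)=2\sum_l x_l$, so that $W_{00}=\tfrac{nc}{2}Z_0$ identically on $\Omega$. This proves the second assertion (with $\lambda=\tfrac{nc}{2}$, after accounting for the constant factors relating $W$ and $W_{00}$, $Z$ and $Z_0$); the real content of the theorem is the uniqueness of the critical point.

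For uniqueness, the key observation is that the proportionality $W_{00}=\tfrac{nc}{2}Z_0$ collapses the functional $F$. First I would compute $F$ in this case: from~(\ref{WZFx}),
\begin{equation*}
F(b)=\frac{\left(2\sum_l x_l\right)^{n+1}}{\left(nc\sum_l x_l\right)^{n}}=\frac{2^{n+1}}{(nc)^{n}}\sum_l x_l=\frac{2^{n}}{(nc)^{n}}Z_0(b),
\end{equation*}
so up to a positive constant $F$ agrees with $Z_0$, equivalently with $W_{00}$. Thus the critical points of $F$ are exactly the critical points of the volume-type functional $W_{00}$ on $\Omega$. The plan is then to show $W_{00}$ is strictly convex on $\Omega$. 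Since each $x_l(b)=\int_{F_l}b^{-n}\,\length$ is a strictly convex function of $b$ (being a positive integral of the strictly convex functions $b\mapsto b(\mu)^{-n}$ over the facet, for $\mu$ ranging over a set of positive measure), any positive linear combination $\sum_l\lambda_l x_l$ with $\lambda_l=c>0$ is strictly convex; hence $W_{00}$ is strictly convex. A strictly convex function on the convex open set $\Omega$ has at most one critical point, and Theorem~\ref{theoExist} (via the boundary blow-up argument already carried out) guarantees at least one. This gives uniqueness.

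The main obstacle I anticipate is rigorously justifying the strict convexity of $x_l$ as a function of $b\in\Omega$, and in particular checking that the domain $\Omega=\{b\in\mC^*_+(\Delta)\,|\,b(0)=1\}$ is genuinely convex so that the one-critical-point conclusion is valid. The convexity of $\Omega$ follows since $\mC^*_+(\Delta)$ is an open convex cone and the affine slice $b(0)=1$ intersects it in a convex set; the pointwise strict convexity of $b\mapsto b(\mu)^{-n}$ on this slice is elementary (the Hessian of $t^{-n}$ is positive), but one must verify that integrating over the facet preserves \emph{strict} convexity, which requires that no nonzero affine variation of $b$ vanishes identically on all of $F_l$ — true because $F_l$ is full-dimensional within its supporting hyperplane. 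A secondary point is bookkeeping: translating the labeled polytope to place the monotonicity center at the origin changes the $L_l$ only by the shift recorded before Lemma~\ref{translationCone}, and by Lemma~\ref{translationCone} this is a linear equivalence of cones, so neither the Reeb family nor the functional $F$ is affected; I would note this explicitly to legitimize the normalization $\lambda_l=c$.
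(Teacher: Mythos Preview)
Your proposal is correct and follows essentially the same route as the paper: translate so that $L_l(0)=c$ for all $l$, deduce $W_{00}=\tfrac{nc}{2}Z_0$ via Lemma~\ref{divW} (equivalently via~(\ref{WZFx})), and then observe that $F$ becomes a positive multiple of the strictly convex function $Z_0$, hence has a unique critical point. The paper's proof is in fact terser than yours---it stops at the proportionality $W_{00}\propto Z_0$ and leaves the strict convexity/uniqueness step implicit (relying on the earlier remark that the $x_l$ are strictly convex)---so your additional care about the convexity of $\Omega$ and the strict convexity of $b\mapsto x_l(b)$ is a welcome elaboration rather than a departure.
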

\begin{proof}
The functional $F$ depends equivariantly on the representative of the affine equivalence class of a labeled polytope. In particular the number of critical points of $F$ does not change if we translate the polytope. If $(\Delta,u)$ is monotone, there exists $\mu\in\mathring{\Delta}$ such that $L_l(\mu)=c>0$ then, by using Lemma~\ref{divW}, the function $W_{00}$ associated to $(\Delta-\mu,u)$ is $W_{00}(b)=  \frac{n}{c} Z_0(b)$.\end{proof}

\section{Examples: The case of quadrilaterals}\label{sectEXquadril}\label{subsectionReebFamilSQUARE}
Up to affine transform there exists a unique strictly convex cone with $4$ facets in $\bR^3$. Indeed, $\bP{\rm Gl}(3,\bR)$ acts transitively on the set of generic $4$--tuples of points of $\brp^2$. In particular, up to linear transform, every Reeb family of quadrilaterals contains the equivalence class of a labeled square. It is then enough to consider the Reeb family of labeled square in order to study Reeb families of quadrilaterals.\\

Let $\Delta_o$ be the convex hull of $p_1=(-1,-1)$, $p_2=(-1,1)$, $p_3=(1,1)$, $p_4=(1,-1)$ in $\bR^2$. $\Delta_o$ is a square and the vectors normal to its edges are of the form
\begin{align}
  \label{eq:lesu}
u_1= \frac{1}{r_1}e_1,\;\;u_2=\frac{-1}{r_2}e_2,\;\;u_3= \frac{-1}{r_3}e_1,\;\;u_4= \frac{1}{r_4}e_2
\end{align}
where $e_1=\begin{pmatrix}
  1\\
  0
\end{pmatrix}$, $e_2=\begin{pmatrix}
  0\\
  1
\end{pmatrix}\in(\bR^2)^*$. The defining functions of $(\Delta_o,u)$ are
$$L_l= \langle\,\cdot\,,u_l\rangle+\frac{1}{r_l}$$
for $l=1,\dots, 4$. Let denote the edges $E_l= L_l^{-1}(0)\cap\Delta_o$. With coordinates $(x,y)$ on $\bR^2$, we have $u_1=\frac{1}{r_1}dx$, $u_2=\frac{-1}{r_2}dy$, $u_3= \frac{-1}{r_3}dx$, $u_4= \frac{1}{r_4}dy$. The following lemma is straightforward.

\begin{lemma} \label{squareFANO}$(\Delta_o,u)$ is monotone if and only if $r_1+r_3=r_2+r_4$.\end{lemma}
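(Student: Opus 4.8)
\emph{Proof proposal.} The plan is to unwind the definition of \emph{monotone} directly: I seek a point $\mu=(x_0,y_0)$ in the interior $\mathring{\Delta}_o$ of the square together with a constant $c>0$ such that $L_l(\mu)=c$ for all $l=1,\dots,4$. Using the explicit normals~(\ref{eq:lesu}), the defining functions are
\begin{align*}
L_1(x,y)&=\tfrac{1}{r_1}(x+1), & L_2(x,y)&=\tfrac{1}{r_2}(1-y),\\
L_3(x,y)&=\tfrac{1}{r_3}(1-x), & L_4(x,y)&=\tfrac{1}{r_4}(1+y),
\end{align*}
so the condition $L_l(\mu)=c$ splits into two independent pairs, one constraining $x_0$ (from $L_1,L_3$) and one constraining $y_0$ (from $L_2,L_4$). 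Note that the $r_l$ are positive, since the $u_l$ are inward normals and $\Delta_o=\{L_l\geq 0\}$.

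First I would treat the horizontal pair: $L_1(\mu)=L_3(\mu)=c$ gives $x_0+1=cr_1$ and $1-x_0=cr_3$, whence, by addition, $c(r_1+r_3)=2$. Likewise the vertical pair $L_2(\mu)=L_4(\mu)=c$ yields $c(r_2+r_4)=2$. Since a single constant $c$ must satisfy both relations, a necessary condition for monotonicity is $r_1+r_3=r_2+r_4$.

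For the converse, assuming $r_1+r_3=r_2+r_4$, I would set $c=\tfrac{2}{r_1+r_3}=\tfrac{2}{r_2+r_4}>0$ and solve the two pairs to obtain
$$x_0=\frac{r_1-r_3}{r_1+r_3},\qquad y_0=\frac{r_4-r_2}{r_2+r_4}.$$
It then remains only to check that $\mu=(x_0,y_0)$ lies in the open square, which is immediate: for positive $r_1,r_3$ one has $-1<\tfrac{r_1-r_3}{r_1+r_3}<1$, and symmetrically for $y_0$. Hence $(\Delta_o,u)$ is monotone. There is essentially no obstacle in this argument; the only points deserving a word of care are confirming $c>0$ and that the resulting center genuinely lies in $\mathring{\Delta}_o$ rather than on its boundary, both of which follow at once from the positivity of the labels $r_l$.
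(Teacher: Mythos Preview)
Your proof is correct and is precisely the direct verification the paper has in mind; the paper itself omits the argument, declaring the lemma ``straightforward,'' and your computation is the natural way to fill it in.
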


Moreover, one can easily check that $$\Omega=\{b=1+b_1e_1+b_2e_2\;|\; |b_1+b_2|<1, |b_1-b_2|<1\}.$$ By integration, we get
$$W_{00}(b)=\int_{\Delta_o}\frac{1}{b^3}dxdy= \frac{2}{b(p_1)b(p_2)b(p_3)b(p_4)}.$$
In this setting, the measure $\length$ can be made explicit:
$$\length_{|_{E_1}}=-r_1dy,\;\; \length_{|_{E_2}}=-r_2dx, \;\;\length_{|_{E_3}}=r_3dy,\;\; \length_{|_{E_4}}=r_4dx.$$ Integrating again leads to
$$Z_{0}(b)=\int_{\del\Delta_o}\frac{1}{b^2}\length= \frac{2r_1}{b(p_1)b(p_2)}+ \frac{2r_2}{b(p_2)b(p_3)}+\frac{2r_3}{b(p_3)b(p_4)}+\frac{2r_4}{b(p_1)b(p_4)}.$$ We then get
$$F(b)=\frac{Z_{0}(b)^3}{W_{00}(b)^2} =\frac{2\left(r_1b(p_3)b(p_4) + r_2b(p_1)b(p_4)+r_3b(p_1)b(p_2)+r_4b(p_2)b(p_3)\right)^3}{b(p_1)b(p_2)b(p_3)b(p_4)} .$$

Put $a_1=b_1-b_2$ and $a_2=b_1+b_2$ so that $b(p_1) =1-a_2$, $b(p_2) =1-a_1$, $b(p_3) =1+a_2$, $b(p_4) =1+a_1$ and $\Omega\simeq\{ (a_1,a_2)\,|\, |a_i| <1 \}$. Moreover,
$$F(b)=\frac{2\left(a_1a_2K+ a_1(K-2(r_2-r_3))+a_2(K-2(r_4-r_3)) + K-2(r_2+r_4)\right)^3}{(1-a_1^2)(1-a_2^2)}$$
with $K=r_1+r_3-r_2-r_4$. Note that $K=0$ if $(\Delta_o,u)$ is monotone.\\
%
%Denote $N(a) =a_1a_2K+ a_1(K-2(r_2-r_3))+a_2(K-2(r_4-r_3)) + K-2(r_2+r_4).$ Thus, $f(a)=F(b(a))= \frac{2N(a)^3}{(1-a_1^2)(1-a_2^2)} $. For $h\in \bR^2$, we have \begin{equation*}\begin{split}
%&  \frac{d}{dt}_{|_{t=0}}f(a+th) = \frac{4(h_1a_1(1-a_2^2)  + h_2a_2(1-a_1^2))N(a)^3}{(1-a_1^2)^2(1-a_2^2)^2}\\
%  & \qquad + \frac{6((h_1a_2+h_2a_1)K +h_1(K-2(r_2-r_3))+ h_2(K-2(r_4-r_3)))N(a)^2}{(1-a_1^2)(1-a_2^2)}.
%\end{split}\end{equation*}
%In particular, $\frac{d}{dt}_{|_{t=0}}f(a+th)=0$ for all $h\in \bR^2$ if and only if the two following equations are satisfied
%\begin{equation}\label{equationSdf=0}\begin{split}
%(3a_2K +K+2(r_2-r_3))(1-a_1^2)+2a_1N(a) &=0 \\
%  (3a_1K +K+2(r_4-r_3))(1-a_2^2)+2a_2N(a) &=0.
%\end{split}\end{equation}
%
%
%The first equation of~(\ref{equationSdf=0}) means that $(a_1,a_2)$ is a root of the polynomial \begin{equation*}\begin{split}P(a_1,a_2) = -a_1^2a_2 K + a_1^2&(K+2(r_2-r_3)) + 2a_1a_2(K+2(r_4-r_3))\\
%&+ 2a_1(K+2(r_2+r_4)) +3a_2K +K+2(r_2-r_3))
%\end{split}\end{equation*}
%and the second equation that $(a_1,a_2)$ is a root of
%\begin{equation*}\begin{split}Q(a_1,a_2)=-a_2^2a_1 K + a_2^2&(K+2(r_4-r_3)) + 2a_1a_2(K+2(r_2-r_3))\\
%&+ 2a_2(K+2(r_2+r_4)) +3a_1K +K+2(r_4-r_3)).
%\end{split}\end{equation*}
One can prove the following lemma using elementary methods:
\begin{lemma}\label{NbCRITICALbounded}
The point $b=1+b_1e_1+b_2e_2 \in\Omega$ is a critical point of $F$ if and only if $(a_1,a_2)=(b_1-b_2,b_1+b_2)\in\bR^2$ is a common root of polynomials
\begin{equation*}\begin{split}P(a_1,a_2) = -a_1^2a_2 K + a_1^2&(K+2(r_2-r_3)) + 2a_1a_2(K+2(r_4-r_3))\\
&+ 2a_1(K+2(r_2+r_4)) +3a_2K +K+2(r_2-r_3)),
\end{split}\end{equation*}
\begin{equation*}\begin{split}Q(a_1,a_2)=-a_2^2a_1 K + a_2^2&(K+2(r_4-r_3)) + 2a_1a_2(K+2(r_2-r_3))\\
&+ 2a_2(K+2(r_2+r_4)) +3a_1K +K+2(r_4-r_3)).
\end{split}\end{equation*}
lying in the square $\{(a_1,a_2)\,|\, |a_i|<1\}$. Consequently, $F$ has at most $7$ critical points.
\end{lemma}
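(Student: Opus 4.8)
The plan is to pass to the linear coordinates $a_1=b_1-b_2$, $a_2=b_1+b_2$, in which $\Omega\simeq\{(a_1,a_2)\mid |a_1|<1,\ |a_2|<1\}$ and
$$F=\frac{2\,\mathsf{N}^3}{\mathsf{D}},\qquad \mathsf{D}=(1-a_1^2)(1-a_2^2),$$
where $\mathsf{N}(a_1,a_2)=Ka_1a_2+\dots$ is the affine-bilinear numerator appearing in the expression for $F$ above. Since $(b_1,b_2)\mapsto(a_1,a_2)$ is a linear isomorphism, $b$ is a critical point of $F$ if and only if the corresponding $(a_1,a_2)$ is, so it suffices to solve $\partial_{a_1}F=\partial_{a_2}F=0$ on the open square. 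First I would record that both $\mathsf{N}$ and $\mathsf{D}$ are strictly positive on $\Omega$: this is clear for $\mathsf{D}$, while $2\mathsf{N}=Z_0\,\mathsf{D}$ and $Z_0>0$ because each of the four boundary terms defining it has positive label $r_l$ and positive vertex values $b(p_i)$.

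Next I would differentiate. Writing $F=2\mathsf{N}^3/\mathsf{D}$ gives
$$\partial_{a_1}F=\frac{2\mathsf{N}^2}{\mathsf{D}^2}\bigl(3\mathsf{D}\,\partial_{a_1}\mathsf{N}-\mathsf{N}\,\partial_{a_1}\mathsf{D}\bigr),$$
and since $\partial_{a_1}\mathsf{D}=-2a_1(1-a_2^2)$ the bracket factors as $(1-a_2^2)\bigl(3(1-a_1^2)\partial_{a_1}\mathsf{N}+2a_1\mathsf{N}\bigr)$. As $\mathsf{N}>0$, $\mathsf{D}>0$ and $(1-a_2^2)>0$ on $\Omega$, the equation $\partial_{a_1}F=0$ is equivalent to the vanishing of the polynomial $P:=3(1-a_1^2)\partial_{a_1}\mathsf{N}+2a_1\mathsf{N}$, and symmetrically $\partial_{a_2}F=0\iff Q:=3(1-a_2^2)\partial_{a_2}\mathsf{N}+2a_2\mathsf{N}=0$. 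Expanding these with the coefficients of $\mathsf{N}$ is routine and produces the two displayed cubics $P$ and $Q$. The one structural feature I really need is their leading (degree-three) parts: because the only quadratic monomial of $\mathsf{N}$ is $Ka_1a_2$, the cubic parts of $P$ and $Q$ are $-Ka_1^2a_2$ and $-Ka_1a_2^2$ respectively.

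Finally I would count the common zeros by Bézout. Assume first $K\neq0$, so $P,Q$ have degree $3$. Passing to $\bcp^2$ with line at infinity $\{z=0\}$, a point $[a_1:a_2:0]$ lies on the closure of $\{P=0\}$ exactly when its leading form $-Ka_1^2a_2$ vanishes, and likewise for $Q$; hence $[1:0:0]$ and $[0:1:0]$ are two distinct common points at infinity. Provided $P$ and $Q$ share no common component, Bézout bounds the common zeros in $\bcp^2$, counted with multiplicity, by $3\cdot 3=9$, so at most $9-2=7$ of them lie in the affine plane $\bC^2$, whence at most $7$ real ones lie in the square; this is the asserted bound. The coprimality of $P$ and $Q$ is the delicate point on which I would concentrate: since $P$ is affine in $a_2$, one can solve $P=0$ for $a_2$ as a rational function of $a_1$ and substitute into $Q=0$, producing a univariate resultant $\mathrm{Res}_{a_2}(P,Q)\in\bR[a_1]$ whose non-vanishing (read off from the positivity of the $r_l$) both guarantees finiteness of the critical set and excludes a common component. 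The degenerate case $K=0$ is easier, for then $P$ and $Q$ drop to degree $\le 2$ and Bézout already yields at most $4\le 7$ common zeros. Thus the main obstacle is not the differentiation, which is mechanical, but certifying that $P$ and $Q$ have no common factor, so that the Bézout count is legitimate.
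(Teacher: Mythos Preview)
The paper does not actually prove this lemma: it just records that ``one can prove the following lemma using elementary methods'' and moves on. So there is no argument in the paper to compare yours against; your proposal is in effect supplying the missing proof.

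Your derivation of $P$ and $Q$ is correct. Writing $F=2\mathsf N^3/\mathsf D$ and using $\mathsf N>0$, $\mathsf D>0$ on $\Omega$ (which you justify via $2\mathsf N=Z_0\mathsf D$), the equation $\partial_{a_i}F=0$ is equivalent to $3(1-a_i^2)\,\partial_{a_i}\mathsf N+2a_i\mathsf N=0$. Expanding with $\mathsf N=Ka_1a_2+\alpha a_1+\beta a_2+\gamma$ (where $\alpha=K+2(r_2-r_3)$, $\beta=K+2(r_4-r_3)$, $\gamma=K+2(r_2+r_4)$) gives
\[
P=-Ka_1^2a_2-\alpha a_1^2+2\beta a_1a_2+2\gamma a_1+3Ka_2+3\alpha,
\]
and symmetrically for $Q$. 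This differs from the displayed $P$ in two places (the sign of the $a_1^2$ coefficient and a missing factor $3$ on the constant term); those are typographical slips in the statement, as one confirms by specializing to $r_1=r_3$, $r_2=r_4$ and matching the formulas used in the next lemma.

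Your B\'ezout argument for the bound $7$ is sound once coprimality is established, and you are right to flag that as the only real issue. Your resultant strategy does the job: since $P$ is linear in $a_2$, write $P=A(a_1)a_2+B(a_1)$ with $A=-Ka_1^2+2\beta a_1+3K$, $B=-\alpha a_1^2+2\gamma a_1+3\alpha$; then $\mathrm{Res}_{a_2}(P,Q)$ is a polynomial in $a_1$ of degree at most $5$, with leading coefficient $3K(K^2-\alpha^2)$. One checks directly that, for $r_l>0$, this resultant does not vanish identically: chasing the conditions $K\ne0$, $\alpha=\pm K$, $\gamma K=\alpha\beta$ needed for a putative common factor forces $r_1=r_2$ and $r_3=r_4$ (or the symmetric relation), hence $K=0$ after all. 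Incidentally, the intersection multiplicity at each of your two points at infinity is actually at least $2$ (at $[1{:}0{:}0]$ the curve $P=0$ is smooth while $Q=0$ has a node, and vice versa at $[0{:}1{:}0]$), so your method in fact yields $\le 5$; but $\le 7$ is all the lemma asserts.
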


\begin{lemma}\label{lemCOND2rootsSQUARE}
%If $r_1=r_3$, $r_2=r_4$ and $r_1+r_3>5(r_2+r_4)$ then $K>0$ and $P$ and $Q$ have exactly $5$ distinct common roots: $(0,0)$, $\pm(a_1,-a_1)$ with $a_1^2 =1-\frac{4(r_2+r_4)}{K} <1$ and $\pm(a_1,a_1)$ with $a_1^2 =5+\frac{4(r_2+r_4)}{K} >1$.
If $r_1=r_3$, $r_2=r_4$ and $r_1+r_3>5(r_2+r_4)$ then $K>0$ and $P$ and $Q$ have exactly $5$ distinct common roots: $(0,0)$, $\pm(a,-a)$ with $0<a^2 =1-\frac{4(r_2+r_4)}{K} <1$ and $\pm(a,a)$ with $a^2 =5+\frac{4(r_2+r_4)}{K} >1$.

In particular, $F$ admits $3$ distinct critical points in $\Omega$.
\end{lemma}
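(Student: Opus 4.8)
The plan is to substitute the symmetry relations $r_1=r_3$ and $r_2=r_4$ directly into the two polynomials $P,Q$ of Lemma~\ref{NbCRITICALbounded} and read off the common roots from the cancellations that occur. First I would record that $K=r_1+r_3-(r_2+r_4)$, so the hypothesis $r_1+r_3>5(r_2+r_4)$ is precisely the inequality $K>4(r_2+r_4)$; together with $r_2+r_4>0$ this immediately gives $K>0$ and, more usefully, $0<\rho<1$ where I abbreviate $\rho=\tfrac{4(r_2+r_4)}{K}$. The key observation is that under the symmetry one has $2(r_2-r_3)=2(r_4-r_3)=-K$, so in $P$ the coefficient of $a_1^2$, the coefficient of $a_1a_2$, and the constant term all vanish, and the same happens in $Q$ with the roles of $a_1$ and $a_2$ interchanged. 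Thus $P$ and $Q$ collapse to the much simpler cubics
\begin{align*}
P&=-K a_1^2a_2+2\bigl(K+2(r_2+r_4)\bigr)a_1+3Ka_2,\\
Q&=-K a_1a_2^2+2\bigl(K+2(r_2+r_4)\bigr)a_2+3Ka_1.
\end{align*}

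The main step is then to pass to the antisymmetric and symmetric combinations, which factor cleanly:
\begin{align*}
P-Q&=-(a_1-a_2)\bigl(Ka_1a_2+K-4(r_2+r_4)\bigr),\\
P+Q&=(a_1+a_2)\bigl(-Ka_1a_2+5K+4(r_2+r_4)\bigr).
\end{align*}
Since a point is a common root of $P$ and $Q$ if and only if it annihilates both $P-Q$ and $P+Q$, I would solve the four systems obtained by selecting one factor from each line. The choices $a_1a_2=\rho-1$ (from the first line) and $a_1a_2=5+\rho$ (from the second) are incompatible, so that case is empty; the remaining three give $(a_1,a_2)=(0,0)$, the pair $\pm(a,a)$ with $a^2=5+\rho$, and the pair $\pm(a,-a)$ with $a^2=1-\rho$. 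As $a^2>0$ in both pairs, these five points are pairwise distinct, which establishes that $P$ and $Q$ have exactly $5$ common roots.

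Finally I would test membership in the open square $\{|a_i|<1\}\simeq\Omega$. The points $\pm(a,a)$ satisfy $a^2=5+\rho>1$ and hence lie outside, whereas for $\pm(a,-a)$ the bound $0<\rho<1$ forces $0<a^2=1-\rho<1$, placing them inside together with the origin; this is exactly the point at which the strict inequality $r_1+r_3>5(r_2+r_4)$ is used. Via Lemma~\ref{NbCRITICALbounded} this yields the three critical points $(0,0)$ and $\pm(a,-a)$ of $F$ in $\Omega$. I expect the only delicate bookkeeping to be the simultaneous handling of the four factor-combinations, and in particular the verification that the symmetric substitution really annihilates the quadratic and constant terms of $P$ and $Q$, since it is this collapse that makes the $P\pm Q$ factorization available; everything afterwards is a direct root count and a check of the two inequalities $a^2\gtrless 1$.
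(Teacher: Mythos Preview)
Your argument is correct. You and the paper make the same opening move---substituting $r_1=r_3$, $r_2=r_4$ so that $K+2(r_2-r_3)=K+2(r_4-r_3)=0$ and $P,Q$ collapse to the short cubics you display---and both finish with the same inequality check $0<1-\rho<1<5+\rho$. The one genuine difference is in how the common roots are extracted. The paper writes $P(a_1,a_2)=C(a_1)a_2+D(a_1)$ with $C(x)=-Kx^2+3K$ and $D(x)=2(K+2(r_2+r_4))x$, observes that $C$ and $D$ have no common zero, and then eliminates $a_2$ to reduce to a single degree-$5$ polynomial in $a_1$; the five roots are then identified by evaluating $P$ and $Q$ along the lines $a_2=\pm a_1$. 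Your route instead exploits the $a_1\leftrightarrow a_2$ symmetry of the pair $(P,Q)$ directly via the factorizations of $P\pm Q$, which is cleaner: it makes the structure of the solution set (origin, diagonal pair, anti-diagonal pair) visible without any elimination, and the ``at most $5$'' count falls out of the incompatibility of the two hyperbola factors rather than from a degree bound. The paper's elimination viewpoint, on the other hand, is what connects this computation to the general $2N-3$ degree bound of the preceding proposition.
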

\begin{proof}
If $r_1=r_3$, $r_2=r_4$ then $K+2(r_4-r_3)=0$, $K+2(r_2-r_3)=0$ and
\begin{align*}
P(a_1,a_2) &= -a_1^2a_2 K  + 2a_1(K+2(r_2+r_4)) +3a_2K, \\
Q(a_1,a_2) &= -a_2^2a_1 K  + 2a_2(K+2(r_2+r_4)) +3a_1K.
\end{align*}
Write $P(x,a_2) =C(x) a_2 +D(x)$ where $C(x) =-K x^2 +3K$ and $D(x)=2(K+2(r_2+r_4))x$. Since $C$ and $D$ do not have any common root, $P$ and $Q$ have at most $5$ common roots of the form
$$\left(a, -\frac{D(a)}{C(a)}\right)\quad \mbox{with $a$ a root of} \quad C^2(x)Q\!\left(x, -\frac{D(x)}{C(x)}\right).$$
By noticing that
\begin{align*}
P(a_1,-a_1) =-Q(a_1,-a_1) &= a_1(a_1^2K - K +4(r_2+r_4))  \quad \mbox{and}\\
P(a_1,a_1) =Q(a_1,a_1) &= a_1(-a_1^2K + 5K +4(r_2+r_4))
\end{align*}
we get the desired $5$ distinct common roots.

Since $r_1+r_3>5(r_2+r_4)$, $0<1-\frac{4(r_2+r_4)}{K}< 1 $ while $5+\frac{4(r_2+r_4)}{K}>1.$
\end{proof}
% Thus, the set of common roots of $P$ and $Q$ is
% $$\left\{\left.\left(a_1, -\frac{D(a_1)}{C(a_1)}\right)\,\right|\, a_1 \mbox{ is a root of } C^2(x)Q\left(x, -\frac{D(x)}{C(x)}\right) \right\}$$
% which consists of at most $5$ elements.
% %One can compute
% %\begin{equation*}C^2(x)Q\left(x, -\frac{D(x)}{C(x)}\right) = 3xK[ -4(K+2(r_2+r_4))^2 +x^4 K^2 -6K^2x^2 + 9K^2]. \end{equation*}
% Putting $a_2=-a_1$, we have
% \begin{equation*}\begin{split}P=-Q &= a_1^3 K  + 2a_1(K+2(r_2+r_4)) -3a_1K\\
% &= a_1(a_1^2 - 3K + 2K +4(r_2+r_4))\\
% &= a_1(a_1^2K - K +4(r_2+r_4)).
% \end{split}\end{equation*}
% while putting $a_2=a_1$, we have
% \begin{equation*}\begin{split}P=Q &= -a_1^3 K  + 2a_1(K+2(r_2+r_4)) + 3a_1K \\
% &= a_1(-a_1^2K + 5K +4(r_2+r_4)).
% \end{split}\end{equation*} Since $r_1+r_3>5(r_2+r_4)$, $0<1-\frac{4(r_2+r_4)}{K}< 1 $ while $5+\frac{4(r_2+r_4)}{K}>1.$\end{proof}

We have to find out which examples provided by Lemma~\ref{lemCOND2rootsSQUARE} correspond to Reeb vector fields on a toric contact manifold, that is, which labeled squares of Lemma~\ref{lemCOND2rootsSQUARE} are characteristic of a good cone. Denote by $(e_1,e_2,e_3)$ the standard basis of $\bR^3$. Consider the polyhedral cone $\mC_o=\{x\;|\, \langle x,\delta_j\rangle \geq 0,\; \; j=1,\dots, 4\}\subset \bR^3$ where the rays $\delta_i$ are
 $$\delta_1 = \bR_{>0} (e_3 + e_1),\; \delta_2 = \bR_{>0} (e_3 + e_2),\;\delta_3 = \bR_{>0} (e_3-e_1),\; \delta_4 = \bR_{>0} (e_3-e_2).$$

\begin{lemma} \label{lemBONCONE4facets}
Each strictly convex polyhedral cone with $4$ facets in $\bR^3$ is equivalent to $\mC_o$ by an affine transform. Moreover, $\mC_o$ is a good cone with respect to a lattice $\Lambda\subset \bR^3$ if and only if there exists $s>0$ such that the primitive normal vectors inward to $\mC_o$ are $\hat{u}_1=sc_1(e_1+e_3)$, $\hat{u}_2=sc_2(e_2+e_3)$, $\hat{u}_3=sc_3(e_3-e_1)$ and $\hat{u}_4=sc_4(e_3-e_2)$ for some positive integers $c_1$, $c_2$, $c_3$, $c_4$ such that
\begin{equation}\label{conditionPPCM}
  {\rm lcm}(c_1,c_2)= {\rm lcm}(c_1,c_4)={\rm lcm}(c_3,c_2)={\rm lcm}(c_3,c_4).
\end{equation}
\end{lemma}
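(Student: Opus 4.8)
The statement has two essentially independent parts, and I would treat them separately. Part 1 is the projective fact already invoked just before the lemma: a strictly convex cone with $4$ facets in $\bR^3$ has exactly $4$ edges, which are $4$ rays in general position (no three coplanar through the origin, else the cone would have fewer than $4$ facets or fail to be strictly convex). These rays give $4$ points of $\brp^2$ in general position, and since $\bP{\rm Gl}(3,\bR)$ acts transitively on projective frames there is a linear isomorphism carrying them to the four edges of $\mC_o$, which are $\bR_{>0}(\pm e_1\pm e_2+e_3)$, hence carrying the cone to $\mC_o$. So I would dispatch Part 1 in a line or two and spend the work on the lattice characterisation. For Part 2, I would first isolate which faces impose conditions in Definition~\ref{defnGOODcone}: a direct check gives $F_1\cap F_3=F_2\cap F_4=\{0\}$, so facets $1,3$ and $2,4$ are opposite and the only genuine edges are $F_1\cap F_2,\ F_2\cap F_3,\ F_3\cap F_4,\ F_4\cap F_1$. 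The facet conditions are automatic for primitive normals, so goodness reduces to (i) the apex condition, that $\hat{u}_1,\dots,\hat{u}_4$ generate $\Lambda$, and (ii) for each edge, the two adjacent normals extend to a $\bZ$-basis of $\Lambda$.

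Writing the primitive inward normals as $\hat{u}_i=t_i\delta_i$ with $t_i>0$, condition (i) gives $\Lambda=\langle\hat{u}_1,\dots,\hat{u}_4\rangle_{\bZ}$, so the surjection $\bZ^4\to\Lambda$, $e_i\mapsto\hat{u}_i$, identifies $\Lambda$ with $\bZ^4/K$ where $K$ is the rank-one group of integral relations. The only $\bR$-linear relation among the $\delta_i$ is $\delta_1+\delta_3=\delta_2+\delta_4$, so $K$ is generated by a single primitive vector $r=(a_1,-a_2,a_3,-a_4)$ with $\gcd(a_1,\dots,a_4)=1$, and comparing with the real relation forces $t_i=\lambda/a_i$ for some $\lambda>0$; equivalently $a_1\hat{u}_1+a_3\hat{u}_3=a_2\hat{u}_2+a_4\hat{u}_4$. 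In particular commensurability of the $t_i$ is not assumed but forced by goodness.

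Next I would compute condition (ii) inside the presentation $\Lambda\cong\bZ^4/\langle r\rangle$. Two classes $[e_i],[e_j]$ extend to a basis iff $\bZ^4/\langle r,e_i,e_j\rangle$ is torsion-free; reducing $r$ modulo $e_i,e_j$ leaves the two complementary coordinates $(a_k,-a_l)$, so this quotient is $\bZ^2/\langle(a_k,-a_l)\rangle\cong\bZ\oplus\bZ/\gcd(a_k,a_l)$, free iff $\gcd(a_k,a_l)=1$. Since the complement of an adjacent pair is again an adjacent pair, running over the four edges shows goodness holds iff $\gcd(a_1,a_2)=\gcd(a_2,a_3)=\gcd(a_3,a_4)=\gcd(a_4,a_1)=1$; the same computation modulo a single $e_i$ shows each $\hat{u}_i$ is then automatically primitive, as adjacent coprimality makes the $\gcd$ of any three $a_i$ equal to $1$. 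To match the stated form I would set $c_i={\rm lcm}(a_1,\dots,a_4)/a_i$, giving $\hat{u}_i=sc_i\delta_i$ with $s=\lambda/{\rm lcm}(a)$ the rescaling freedom; conversely any positive integers with the ${\rm lcm}$ condition arise this way after normalising $\gcd(c_i)=1$ and putting $a_i={\rm lcm}(c)/c_i$, which builds a good $\Lambda=\langle\hat{u}_i\rangle$. The elementary identity ${\rm lcm}(c_i,c_j)={\rm lcm}(a)/\gcd(a_i,a_j)$ (checked prime by prime) converts ``all four adjacent ${\rm lcm}(c_i,c_j)$ equal'' into ``all four adjacent $\gcd(a_i,a_j)$ equal'', and when $\gcd(a_1,\dots,a_4)=1$ any common value $g$ of those gcds divides every $a_i$, forcing $g=1$; so equal adjacent lcm's is equivalent to all adjacent gcds being $1$, which is exactly (ii).

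I expect the main obstacle to be the bookkeeping of Part 2: reducing goodness to the edge and apex conditions, and then carrying out the ``extend to a basis'' computations cleanly in the quotient presentation $\bZ^4/\langle r\rangle$ while tracking which coordinate pair $(a_k,a_l)$ governs each edge and verifying primitivity along the way. The closing number-theoretic step — that equality of the adjacent lcm's is equivalent to coprimality of the adjacent $a_i$ under $\gcd(a_\bullet)=1$ — is short, but it is the one place where the precise shape of the stated condition (equality of lcm's rather than outright coprimality) must be reconciled with what the geometry naturally produces.
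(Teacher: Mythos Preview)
Your argument is correct, and for Part~1 it is the same as the paper's (both invoke the transitive action of $\bP{\rm Gl}(3,\bR)$ on projective frames). For Part~2, however, the strategies diverge. The paper does not unpack Definition~\ref{defnGOODcone} face by face. Instead it writes the primitive normals as $\hat{u}_i=c_i\delta_i$, observes that rationality of the cone forces the $c_i$ to be commensurable (so after rescaling by $s$ they are integers), and then invokes the Delzant--Lerman construction: the contact manifold is realised as the quotient of a level set $\tilde{X}\subset\bC^4\setminus\{0\}$ by a circle action with weights $k_i=(-1)^{i-1}C/c_i$, where $C={\rm lcm}(c_1,\dots,c_4)$, and Lerman's theorem identifies goodness of the cone with smoothness of this quotient. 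Smoothness is then read off from the stabilisers, yielding directly the four conditions $\gcd(k_i,k_j)=1$ for adjacent pairs, which the identity $|ab|={\rm lcm}(a,b)\gcd(a,b)$ converts into~\eqref{conditionPPCM}.

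Your $(a_1,\dots,a_4)$ are exactly the paper's $(k_1,\dots,k_4)$, and your adjacent--gcd conditions coincide with the paper's stabiliser conditions; the arithmetic core is identical. What differs is the geometric wrapper: you reduce goodness to the apex and edge conditions of Definition~\ref{defnGOODcone} and compute them in the presentation $\Lambda\cong\bZ^4/\langle r\rangle$, whereas the paper outsources this step to Lerman's smoothness criterion. Your route is more self-contained and makes transparent why only adjacent pairs appear (complements of edges are edges), at the cost of more bookkeeping; the paper's route is shorter but imports a nontrivial external result.
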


\begin{proof} The first statement follows from the fact that $\bP\mathrm{Gl}(3,\bR)$ acts transitively on the set of generic $4$--tuples of points of $\brp^{2}$ and that the normal lines of a strictly convex polyhedral cone are generic (any subset of $3$ elements is a basis).

In our case, the normal inward vectors are $\hat{u}_1=c_1(e_1+e_3)$, $\hat{u}_2=c_2(e_2+e_3)$, $\hat{u}_3=c_3(e_3-e_1)$ and $\hat{u}_4=c_4(e_3-e_2)$. Up to multiplication by a constant $s>0$, we may assume that $c_i\in \bQ$, for all $i$. Indeed, the fact that $(\mC_o,\Lambda)$ is rational (i.e it is possible to choose $u_j\in \Lambda$) implies that $c_i/c_j\in\bQ$. % Indeed, this implies that the linear map $A\in\mbox{GL}(3,\bR)$ defined by the fact $A\hat{u}_1=e_1$, $A\hat{u}_2=e_2$ and $A\hat{u}_3=e_3$, maps $\hat{u}_4$ on the vector $A\hat{u}_4=(-c_4/c_1,-c_4/c_2,c_4/c_3)$ in $\bQ^3$. Hence, up to multiplicative constant, we can suppose that $c_j \in\bN$ and $c_1\wedge c_2\wedge c_3\wedge c_4 =1$.\\
Moreover, there exists a primitive vector of $\bZ^4$, say $k= (k_1,k_2,k_3,k_4)$, such that $\sum_j k_j u_j =0$. This vector is unique up to sign and, putting $C={\rm  lcm}(c_1, c_2, c_3, c_4)$, we choose it to be $$k= (C/c_1,-C/c_2, C/c_3,-C/c_4).$$ The Delzant--Lerman construction~\cite{L:contactToric} implies that the symplectic cone over the toric contact manifold is the quotient of
$$\tilde{X}= \left\{(z_1,z_2,z_3,z_4)\in\bC^4\backslash \{0\} \;\left|\; \sum_{j=1}^4 k_j |z_j|^2 =0\right.\right\}$$
by the action of $S^1$, defined by $\rho \co S^1 \times \tilde{X} \ra \tilde{X}$, $$\rho(\lambda,z)= ((\lambda^{k_1} z_1,\lambda^{k_3}z_3), (\lambda^{k_2}z_2,\lambda^{k_4}z_4)).$$
Lerman~\cite{L:contactToric} showed that the quotient constructed via this method from a rational polyhedral cone $(\mC_o,\Lambda)$ is smooth if and only if $(\mC_o,\Lambda)$ is a good cone.
The stabilizer of a point $z\in \tilde{X}$ is determined by its vanishing components, precisely $\mbox{Stab}_{\rho} z = \{ \lambda \in S^1 \,|\, \lambda^{k_j}=1 \mbox{ if } z_j\neq 0\}.$ One can then verify that the stabilizer group of each point $z\in \tilde{X}$ is trivial if and only if \begin{equation}\label{conditionPGCD}
  {\rm gcd}(k_1, k_2)= {\rm gcd}(k_1, k_4)= {\rm gcd}(k_3, k_2)= {\rm gcd}(k_1, k_4)=1.
\end{equation} Since $k_j=(-1)^{j-1}C/c_j$ and $|ab|={\rm lcm}(a,b){\rm gcd}(a,b)$, the condition~(\ref{conditionPGCD}) is equivalent to ${\rm lcm}(c_1,c_2)= {\rm lcm}(c_1,c_4)={\rm lcm}(c_3,c_2)={\rm lcm}(c_3,c_4)=C.$\end{proof}

\begin{proof}[Proof of Theorem~\ref{TheoS2S3}]
Let $(r_1,r_2,r_3,r_4)\in\bZ^4$ with $p=r_1=r_3$, $q=r_2=r_4$ being coprime integers such that $p>5q$. Let $u$ be defined by~\eqref{eq:lesu}. The labeled cone of $(\Delta_o,u)$ is identified with the cone $\mC_o$ labeled with the vectors
$$\hat{u}_1=\frac{1}{p}(e_1+e_3),\;\;\hat{u}_2=\frac{1}{q}(e_2+e_3),\;\;\hat{u}_3=\frac{1}{p}(e_3-e_1),\;\;\hat{u}_4=\frac{1}{q}(e_2-e_3).$$
These vectors are all contained in a lattice $\Lambda$ and the rational cone $(\mC_o,\Lambda)$ is good since it satisfies Lemma~\ref{lemBONCONE4facets}. In particular, $(\mC_o,\Lambda)$ is associated to the Wang--Ziller manifold $M^{1,1}_{p,q}$ with the toric contact structure $(\bfD,\hat{T})$ described in~\cite{BGS2}.

The set of characteristic labeled polytopes of $(\mC_o,\Lambda)$ is the Reeb family of $(\Delta_o,u)$ which satisfies the hypothesis of Lemma~\ref{lemCOND2rootsSQUARE}. Hence, there exist 3 Reeb vectors:
$$b_o=(0,0,1)\;\;\mbox{ and }\;\;b_{\pm}= \left(0,\pm\sqrt{1+\frac{4q}{p-q}},1 \right)$$
whose respective characteristic labeled quadrilaterals have constant extremal affine functions. Via~\cite[Theorem 1.4]{TGQ}, there exist symplectic potentials, say $\phi_{b_o}\in\mS(\Delta_{b_o},u_{b_o})$ and $\phi_{\pm b}\in\mS(\Delta_{ b_{\pm}},u_{b_{\pm}})$, for which $S(\phi_{b_o})$, $S(\phi_{b_+})$ and $S(\phi_{b_{-}})$ are constant. In particular, their respective Boothy--Wang symplectic potentials $\hat{\phi}_{b_o}$, $\hat{\phi}_{b_+}$ and $\hat{\phi}_{b_{-}}$ define toric K\"ahler cone metrics $\hat{g}_{b_o}$, $\hat{g}_{b_+}$ and $\hat{g}_{b_{-}}$ on the symplectic cone over $(M^{1,1}_{p,q}, \bfD,\hat{T})$ for which the associated Sasaki metrics $g_{b_o}$, $g_{b_+}$ and $g_{b_{-}}$ on $M^{1,1}_{p,q}$ have constant scalar curvature.

The Wang--Ziller manifold $M^{1,1}_{p,q}$ is diffeomorphic to $S^2\times S^3$, see~\cite{BGS2}, and cannot carry a $3$--Sasaki structure due to its dimension. Thus, it satisfies the hypothesis of Proposition~\ref{lemUNIQ}. So, if there exists a diffeomorphism $\psi$ such that $\psi^*g_{b_o}$ is a transversal homothety of $g_{b_+}$, then there exists a real number $\lambda>0$ such that $(\lambda\Delta_{b_o},u_{b_o})$ is equivalent to $(\Delta_{b_+},u_{b_+})$ in the sense of Definition~\ref{defnPolytEquiv}. But, this cannot happen since $\Delta_{b_o}$ is a square while $\Delta_{b_+}$ is a trapezoid. Similarly $g_{b_o}$ and $g_{b_{-}}$  are not isometric as Riemannian metrics even up to a transversal homothety.

On the other hand, the linear transform $A= -e_1\otimes e_1^* - e_2\otimes e_2^* +e_3\otimes e_3^*$ preserves the set of normals and exchanges $b_+$ and $b_-$. Thus, $A^*$ preserves $\mC_o$ and provides a $\hat{T}$--equivariant contactomorphism sending $g_{b_+}$ to $g_{b_{-}}$, thanks to Proposition \ref{unicity}. \end{proof}

\bibliographystyle{abbrv}

\end{document}